\newcommand{\ox}{\otimes}
\newcommand{\todo}[1]{\textcolor{red}{#1}}
\renewcommand{\phi}{\varphi}
\newcounter{dummy} 
\numberwithin{dummy}{section}
\newtheorem{lemma}[dummy]{Lemma}  
\newtheorem{theorem}[dummy]{Theorem}
\theoremstyle{definition}
\newtheorem{definition}[dummy]{Definition}
\newtheorem{corollary}[dummy]{Corollary} 
\theoremstyle{definition}
\numberwithin{equation}{section}
\newlength{\llcfoo}
\newdimen\w@dth
\def\setw@dth#1#2{\setbox\z@\hbox{\scriptsize $#1$}\w@dth=\wd\z@
\setbox\@ne\hbox{\scriptsize $#2$}\ifnum\w@dth<\wd\@ne \w@dth=\wd\@ne \fi
\advance\w@dth by 1.2em}
\def\t@^#1_#2{\allowbreak\def\n@one{#1}\def\n@two{#2}\mathrel
{\setw@dth{#1}{#2}
\mathop{\hbox to \w@dth{\rightarrowfill}}\limits
\ifx\n@one\empty\else ^{\box\z@}\fi
\ifx\n@two\empty\else _{\box\@ne}\fi}}
\def\t@@^#1{\@ifnextchar_ {\t@^{#1}}{\t@^{#1}_{}}}
\def\t@left^#1_#2{\def\n@one{#1}\def\n@two{#2}\mathrel{\setw@dth{#1}{#2}
\mathop{\hbox to \w@dth{\leftarrowfill}}\limits
\ifx\n@one\empty\else ^{\box\z@}\fi
\ifx\n@two\empty\else _{\box\@ne}\fi}}
\def\t@@left^#1{\@ifnextchar_ {\t@left^{#1}}{\t@left^{#1}_{}}}
\def\two@^#1_#2{\def\n@one{#1}\def\n@two{#2}\mathrel{\setw@dth{#1}{#2}
\mathop{\vcenter{\hbox to \w@dth{\rightarrowfill}\kern-1.7ex
                 \hbox to \w@dth{\rightarrowfill}}%
       }\limits
\ifx\n@one\empty\else ^{\box\z@}\fi
\ifx\n@two\empty\else _{\box\@ne}\fi}}
\def\tw@@^#1{\@ifnextchar_ {\two@^{#1}}{\two@^{#1}_{}}}
\def\tofr@^#1_#2{\def\n@one{#1}\def\n@two{#2}\mathrel{\setw@dth{#1}{#2}
\mathop{\vcenter{\hbox to \w@dth{\rightarrowfill}\kern-1.7ex
                 \hbox to \w@dth{\leftarrowfill}}%
       }\limits
\ifx\n@one\empty\else ^{\box\z@}\fi
\ifx\n@two\empty\else _{\box\@ne}\fi}}
\def\t@fr@^#1{\@ifnextchar_ {\tofr@^{#1}}{\tofr@^{#1}_{}}}
\newdimen\W@dth
\def\setW@dth#1#2{\setbox\z@\hbox{$#1$}\W@dth=\wd\z@
\setbox\@ne\hbox{$#2$}\ifnum\W@dth<\wd\@ne \W@dth=\wd\@ne \fi
\advance\W@dth by 1.2em}
\def\T@^#1_#2{\allowbreak\def\N@one{#1}\def\N@two{#2}\mathrel
{\setW@dth{#1}{#2}
\mathop{\hbox to \W@dth{\rightarrowfill}}\limits
\ifx\N@one\empty\else ^{\box\z@}\fi
\ifx\N@two\empty\else _{\box\@ne}\fi}}
\def\T@@^#1{\@ifnextchar_ {\T@^{#1}}{\T@^{#1}_{}}}
\def\T@left^#1_#2{\def\N@one{#1}\def\N@two{#2}\mathrel{\setW@dth{#1}{#2}
\mathop{\hbox to \W@dth{\leftarrowfill}}\limits
\ifx\N@one\empty\else ^{\box\z@}\fi
\ifx\N@two\empty\else _{\box\@ne}\fi}}
\def\T@@left^#1{\@ifnextchar_ {\T@left^{#1}}{\T@left^{#1}_{}}}
\def\Tofr@^#1_#2{\def\N@one{#1}\def\N@two{#2}\mathrel{\setW@dth{#1}{#2}
\mathop{\vcenter{\hbox to \W@dth{\rightarrowfill}\kern-1.7ex
                 \hbox to \W@dth{\leftarrowfill}}%
       }\limits
\ifx\N@one\empty\else ^{\box\z@}\fi
\ifx\N@two\empty\else _{\box\@ne}\fi}}
\def\T@fr@^#1{\@ifnextchar_ {\Tofr@^{#1}}{\Tofr@^{#1}_{}}}
\def\Two@^#1_#2{\def\N@one{#1}\def\N@two{#2}\mathrel{\setW@dth{#1}{#2}
\mathop{\vcenter{\hbox to \W@dth{\rightarrowfill}\kern-1.7ex
                 \hbox to \W@dth{\rightarrowfill}}%
       }\limits
\ifx\N@one\empty\else ^{\box\z@}\fi
\ifx\N@two\empty\else _{\box\@ne}\fi}}
\def\Tw@@^#1{\@ifnextchar_ {\Two@^{#1}}{\Two@^{#1}_{}}}
\def\to{\@ifnextchar^ {\t@@}{\t@@^{}}}
\def\from{\@ifnextchar^ {\t@@left}{\t@@left^{}}}
\def\tofro{\@ifnextchar^ {\t@fr@}{\t@fr@^{}}}
\def\To{\@ifnextchar^ {\T@@}{\T@@^{}}}
\def\From{\@ifnextchar^ {\T@@left}{\T@@left^{}}}
\def\Two{\@ifnextchar^ {\Tw@@}{\Tw@@^{}}}
\def\Tofro{\@ifnextchar^ {\T@fr@}{\T@fr@^{}}}
\newcommand{\vcenteredinclude}[2]{\begingroup
\setbox0=\hbox{\includegraphics[#1]{#2}}%
\parbox{\wd0}{\box0}\endgroup}
\begin{document}
\title{Normalizing Resistor Networks}
\author{Robin Cockett
\institute{Department of Computer Science}
\institute{University of Calgary\\
Alberta, Canada}
\email{robin@ucalgary.ca}
\and
Amolak Ratan Kalra 
\institute{Institute for Quantum Computing (IQC)\\ Cheriton School of Computer Science}
\institute{University of Waterloo\\
Ontario, Canada}
\email{arkalra@uwaterloo.ca}
\and
Priyaa Varshinee Srinivasan
\institute{University of Calgary\\
Alberta, Canada\\ National Institute of Standards and Technology,\\
Maryland, USA}
\email{priyaavarshinee@gmail.com}
}

\def\titlerunning{Normalizing Resistor Networks}
\def\authorrunning{R.Cockett, A.R Kalra, P. Srinivasan}

\maketitle 
\begin{abstract}
Star to mesh transformations are well-known in electrical engineering, and are reminiscent of local complementation for graph states in qudit stabilizer quantum mechanics. This paper describes a rewriting system for resistor circuits over any positive division rig using general star to mesh transformations. We show how these transformations can be organized into a confluent and terminating rewriting system on the category of resistor circuits. Furthermore, based on the recently established connections between quantum and electrical circuits, this paper pushes forward the quest for approachable normal forms for stabilizer quantum circuits.  
\end{abstract}

\section{Introduction}

Electrical circuits are well-studied and, indeed, the basis of an eponymous engineering discipline. One would therefore expect that there is not much more that can be usefully said about the simplest and most basic of these circuits, namely circuits consisting of just resistors.   However, it turns out that there is always more to say!  Indeed, it seems possible that modern mathematical methods can even provide new insight into what is an old and well-studied subject. Furthermore, 
by considering resistor networks with resistance values in finite fields -- which is not the most natural direction of generalization from an electrical engineering perspective -- has a tantalizing connection to the theory of qudit stabilizer quantum mechanics.

 A categorical description for electrical circuits was provided in Brendan Fong's thesis \cite{fong2016algebra}, described in a paper with John Baez \cite{baez2015compositional}, and was also the subject of Brandon Coya's thesis \cite{Coyer}.  Following the work of Cole Comfort and Alex Kissinger \cite{comfort2021graphical}, the current authors with Shiroman Prakash investigated the relationship between electrical circuits and quantum circuits \cite{cockett2022categories}. There it was noted that the structure of parity check matrices arising from resistor networks and graph states are precisely the same. This work, in turn, relied on the developments of Graphical Linear Algebra \cite{zanasi} where it was realized that there was a natural encoding of resistors (and electrical circuits) into categories of linear relations.

Resistor networks (or circuits) form a hypergraph category, \cite{fong2019hypergraph}, which we call ${\sf Resist}$: this is a symmetric monoidal category in which each object is a commutative Fr\"obenius algebra (coherent with the tensor product).  It is an open issue as to whether the equality of maps in ${\sf Resist}$ can in general be resolved by a simple rewriting system \cite{Baeztalk}.
We resolve this question in this article for resistor circuits over a positive division rig. Our rewriting system uses an important identity for electrical circuits of resistors called the star/mesh or $(Y/\Delta)$ identity, which asserts that a ``star-shaped'' circuit (with $n$-points) is equivalent to a ``mesh-shaped'' circuit (on $n$ points).  This is a classical observation in electrical engineering with proofs going back almost a century \cite{stardelta}.  

While it is well-known that the $(Y/\Delta)_3$ transform for three nodes is a two-way identity, in the sense that any three pointed star can be transformed into a triangular mesh and conversely any such mesh can be transformed into a star, this fails for $n > 3$.  It fails for a simple reason: meshes of resistors with $n$-nodes when $n>3$ have more degrees of freedom than stars with $n$-nodes.  Meshes on $n$-nodes have $n(n-1)/2$ resistors while stars have only $n$: only at $n=3$ do they have the same number of resistors!  Thus, it is not the case that {\em every\/} $n$-mesh is be equivalent to a $n$-star for $n>3$.  However, it does remain the case that (for every $n \in \mathbb{N}$) every $n$-star can be transformed into an equivalent $n$-mesh, thereby, suggesting a natural orientation for these identities.

Not surprisingly the normalizing procedure we introduce using the general star/mesh identity is oriented in the star to mesh direction: we prove that this forms part of a confluent rewriting system on the category of resistor circuits, ${\sf Resist}$.  Regrettably, this rewriting does not make the circuits more efficient in terms of hardware real-estate, however, it certainly does provide a simple, easily automated, decision procedure for equality. The resulting normal form for circuits is a family of meshes (with ``extra inputs''); a form foreshadowed not only by the work in \cite{fong2016algebra,baez2015compositional,Coyer} but also in the work using parity matrices \cite{cockett2022categories}. 



While this paper provides a rewriting system for resistor circuits in which the conductances are taken from an arbitrary {\em positive division rig},
a more desirable objective would be to show that our results hold for {\em all\/} division rigs.  All fields, including finite fields, are examples of division rigs.  Resistors over finite fields can be interpreted as (special) stabilizer quantum circuits \cite{kalra2022category}: thus, obtaining such a generalization would provide normal forms for these quantum circuits which in turn could possibly be generalized to arbitrary stabilizer circuits.

We choose positive rigs in our formulation because the normalization procedure demand division by sums, hence,  in a non-positive rig one can possibly run into a division by zero situation during rewriting. Clearly, $\mathbb{R}_{>0}$, the usual ``base'' for resistors in electrical engineering is a positive division rig. For our normalization procedure, we have also  chosen to using conductances rather than impedances (or resistances) of resistors in an attempt to simplify the calculations. However, the calculation in its impedance form may also provide some advantages as there is only one case in which division by a sum may occur --- during a parallel rewrite.   Unfortunately, simply switching to impedance form does not quite suffice to allow the generalization to arbitrary division rigs!

Reducing resistor networks using series, parallel, and (Y-$\Delta$) transformations, or a combination of these to eliminate the internal nodes of the network is a well-studied problem in electrical engineering. Indeed, there exist several studies on general methods and efficient algorithms to automate the reduction process of large resistor networks \cite{Romm09,PhysRevB.37.302,Geo88,Knud06}. However, to the best of our knowledge, a terminating confluent rewriting system for such networks is yet to be found and have been suggested not to exist due to the inherent directionality of the reduction rules \cite{Baeztalk}. The main contributions of this paper are a categorical presentation of resistor networks as hypergraph categories, and a terminating confluent rewriting system for these networks based on star-mesh transformations.

\medskip 

\noindent
{\bf Notation:} Throughout this paper composition is written in diagrammatic order: $fg$ means apply $f$ followed by $g$. The string diagrams are to be read from top to bottom (following the direction of gravity) or left to right.

\section{Background}
\subsection{Spider Rewriting}

A category is a {\bf hypergraph category} in case it is a symmetric monoidal category in  which every object is coherently a special commutative Fr\"obenius algebra, \cite{fong2019hypergraph}, see Appendix \ref{Appendix: hypergraph} for details. There is a well-known rewriting system on any hypergraph category, often called ``spider'' rewriting, which normalizes  Fr\"obenius operations,  $\circ^m_n$, called ``spiders'' which have $m$ inputs and $n$ outputs (whose order does not matter).  Within this rewriting system, two spiders which are connected can then be amalgamated to form a bigger spider, see equation {\sf Spider}-(a).  The ``special'' rule allows loops, to be eliminated, see equation {\sf Spider}-(b). The main rewriting rules for spiders are:
\begin{align*}
   {\sf[Spider]} ~~~~~ (a)~ \vcenteredinclude{scale=0.12}{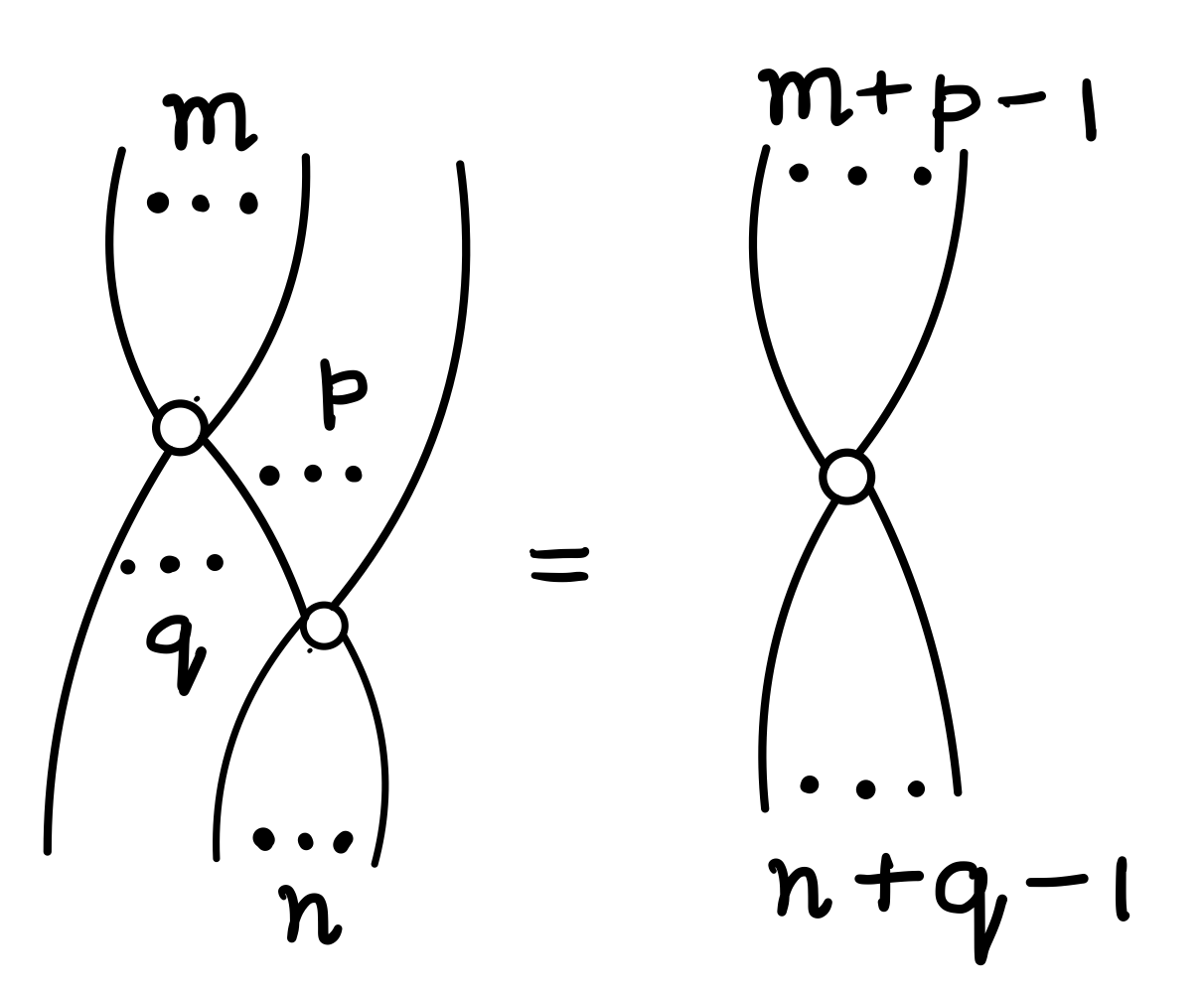} ~~~~ 
    (b)~ \vcenteredinclude{scale=0.1}{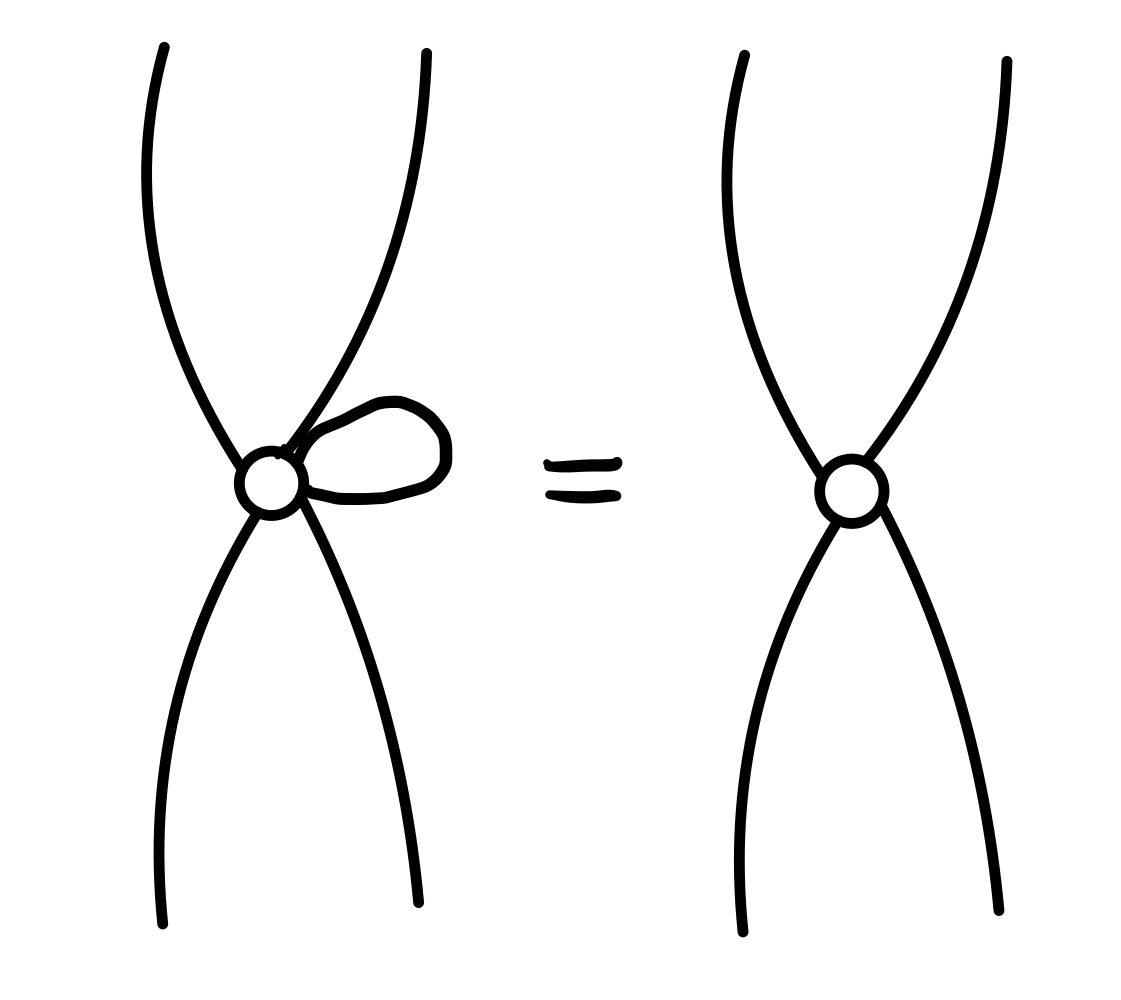} 
\end{align*}

The following unusual rewrite is an {\em expansion} which replaces a wire with a wire with a $\circ^1_1$ junction:
\begin{align*}
 {\sf[Expansion]} ~~~~~ \vcenteredinclude{scale=0.05}{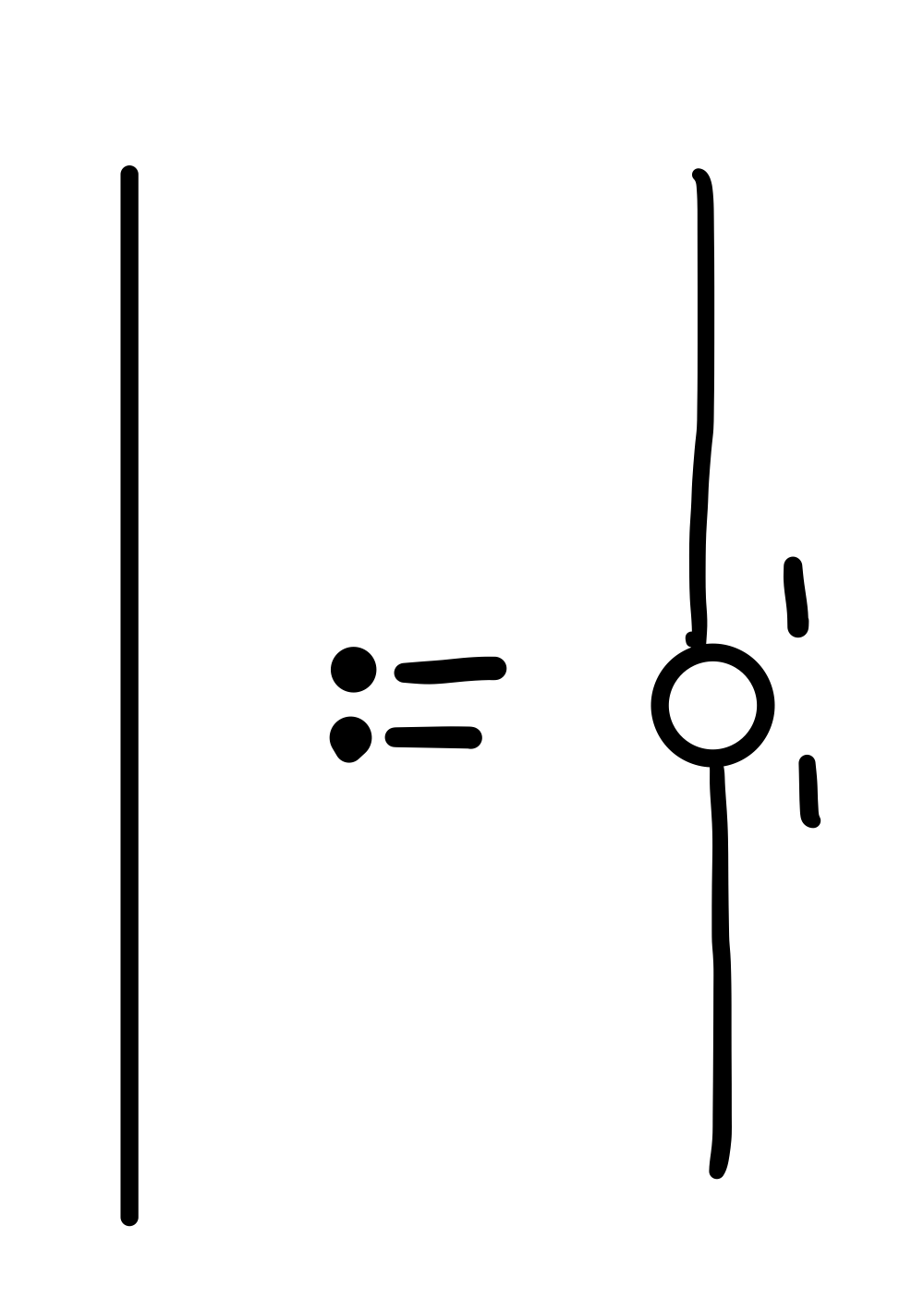}
\end{align*}
 
Clearly this, as a rewrite, can be performed indefinitely, however, an expansion is only used when there are no rewrites which can immediately undo it: thus, expansion rules are used only {\em irreducibly\/}.  For example, an expansion of a wire on which there is already a spider can always be reduced and so is not irreducible. 

The spiders $\circ^0_1$, $\circ^1_0$, and $\circ^1_1$ are given by the unit, the counit, and the identity maps respectively. The spiders $\circ^0_2$ and $\circ^2_0$ are given as follows:
\[ \vcenteredinclude{scale=0.06}{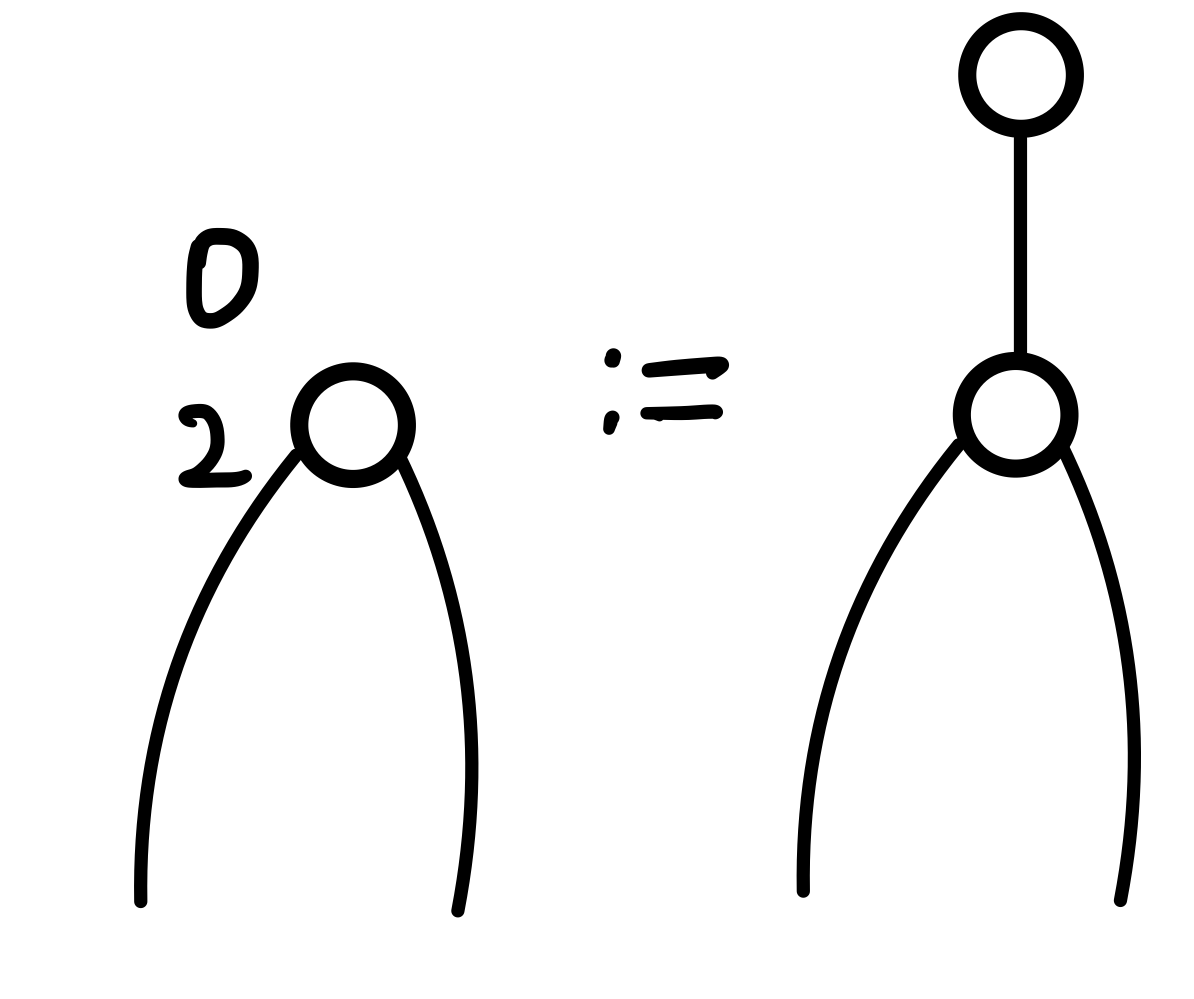} ~~~~~~~~
   \vcenteredinclude{scale=0.06}{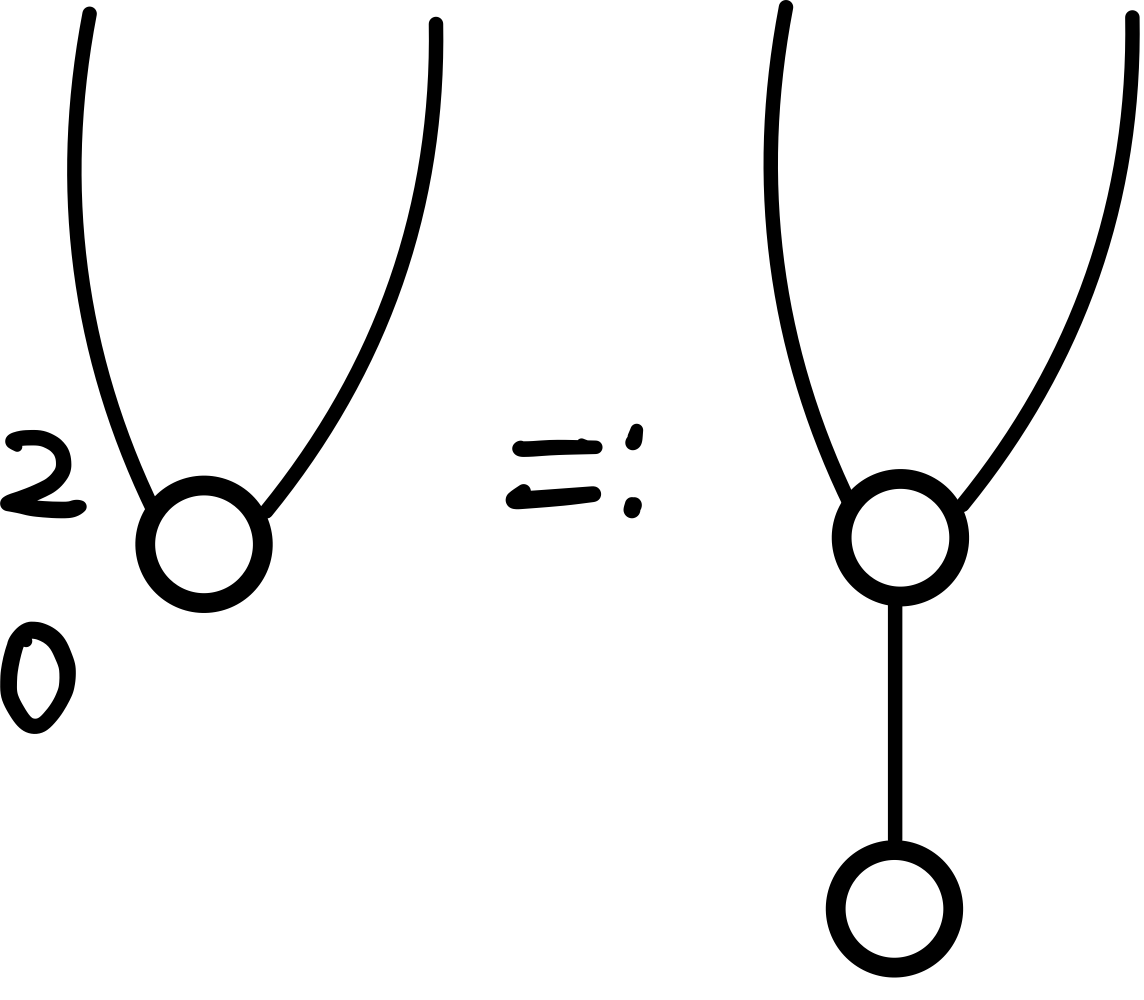}  \]

A few examples of the spider rewriting are as follows:
   \[ (a) \vcenteredinclude{scale=0.1}{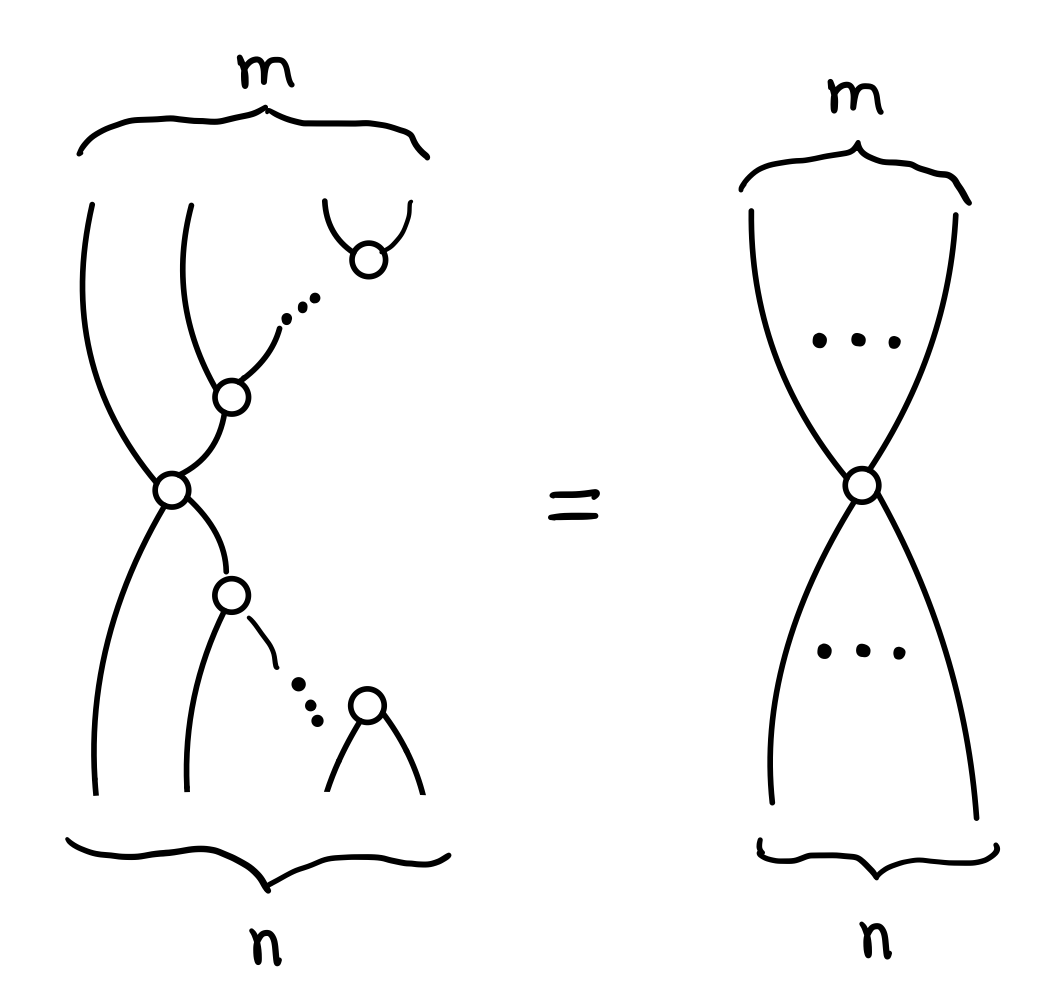} ~~~~~~~~
     (b) \vcenteredinclude{scale=0.07}{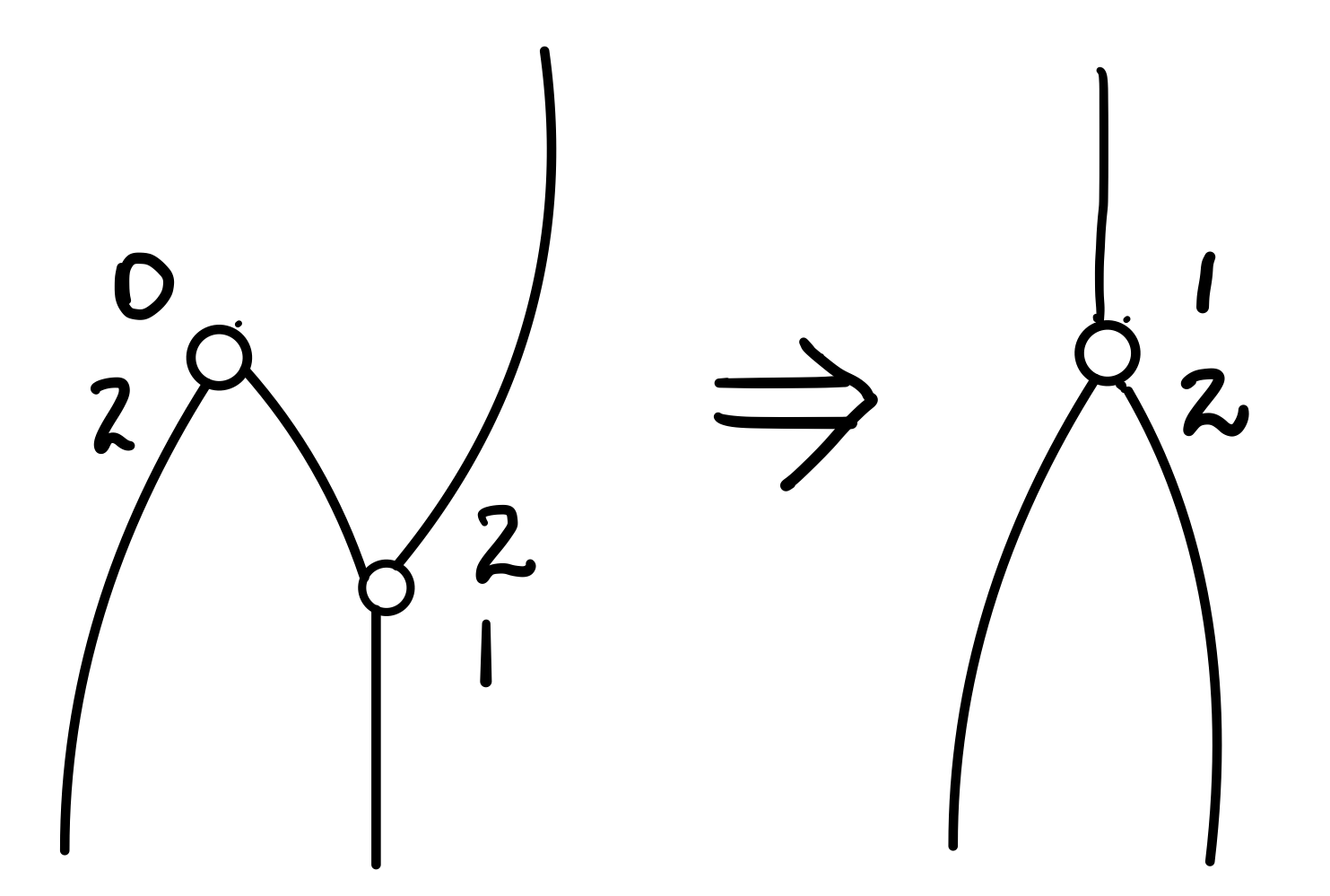}\]

\subsection{Rigs}

The category ${\sf Resist}_R$ is built atop a rig, $R$, which must satisfy some special properties: the elements of the rig will represent conductances (recall we shall work with conductances rather than impedances) as this makes for slightly simpler calculations.  This means that the rule for amalgamating parallel resistors is achieved by simply adding their conductances.  However, composing resistors in series then becomes more complicated and uses the ability to ``divide''. 

Recall that a {\bf rig} is a ``ring without negatives'' in the sense that, under addition it is a commutative monoid, and under multiplication a monoid.  These operations must satisfy the distributive laws: $r \cdot (p + q) = r \cdot p + r \cdot q$,  $(p + q) \cdot r = p \cdot r + q \cdot r$, and $0 \cdot r = 0 = s \cdot 0$.   Here we shall  consider only {\bf commutative} rigs in which $p \cdot q = q \cdot p$.   The paradigmatic and initial rig is the rig of natural numbers $\mathbb{N}$.   

A {\bf division rig} is a rig in which all non-zero elements have a multiplicative inverse.  Fields are clearly examples of division rigs as are the positive rationals, $\mathbb{Q}_{>0}$, and the positive reals, $\mathbb{R}_{>0}$. Furthermore, the two element lattice with join as addition and meet as multiplication is also a division rig.   

A rig is {\bf positive} if $x + y = 0$ implies that both $x=0$ and $y=0$.  $\mathbb{Q}_{\geq 0}$, $\mathbb{R}_{\geq 0}$, and the two element lattice are all positive division rigs in this sense. However, fields and, in particular, finite fields are not positive rigs.  

Another important example of a positive division rig is the so called ``tropical'' rig, $(\mathbb{R} \cup \{ -\infty\}, \vee,+)$, where the addition of the rig uses the maximum (with unit $-\infty$) and multiplication uses addition (with $-\infty$ as a zero).  

Below we show how to build a category of resistors based on a positive division rig, $R$.

\section{The category ${\sf Resist}_R$}

\begin{definition}
\label{Defn: ResistR}
The category ${\sf Resist}_R$, where $R$ is a positive division rig, is a hypergraph category that consists of:
\begin{description}
\item [Objects:] Natural numbers 
\item [Maps:] Generated by {\bf conductances} $y: 1\to 1$ where $y~\in~ R$ with $y\neq 0$ and ``junctions'' $\circ^n_m$.
\end{description}
\end{definition}
Together with the [{\sf Spider}] rules, the maps satisfy the following three identities (and a family of star-mesh identities): 
\[ 
[{ \sf Self-adjoint}]~~ \vcenteredinclude{scale=0.07}{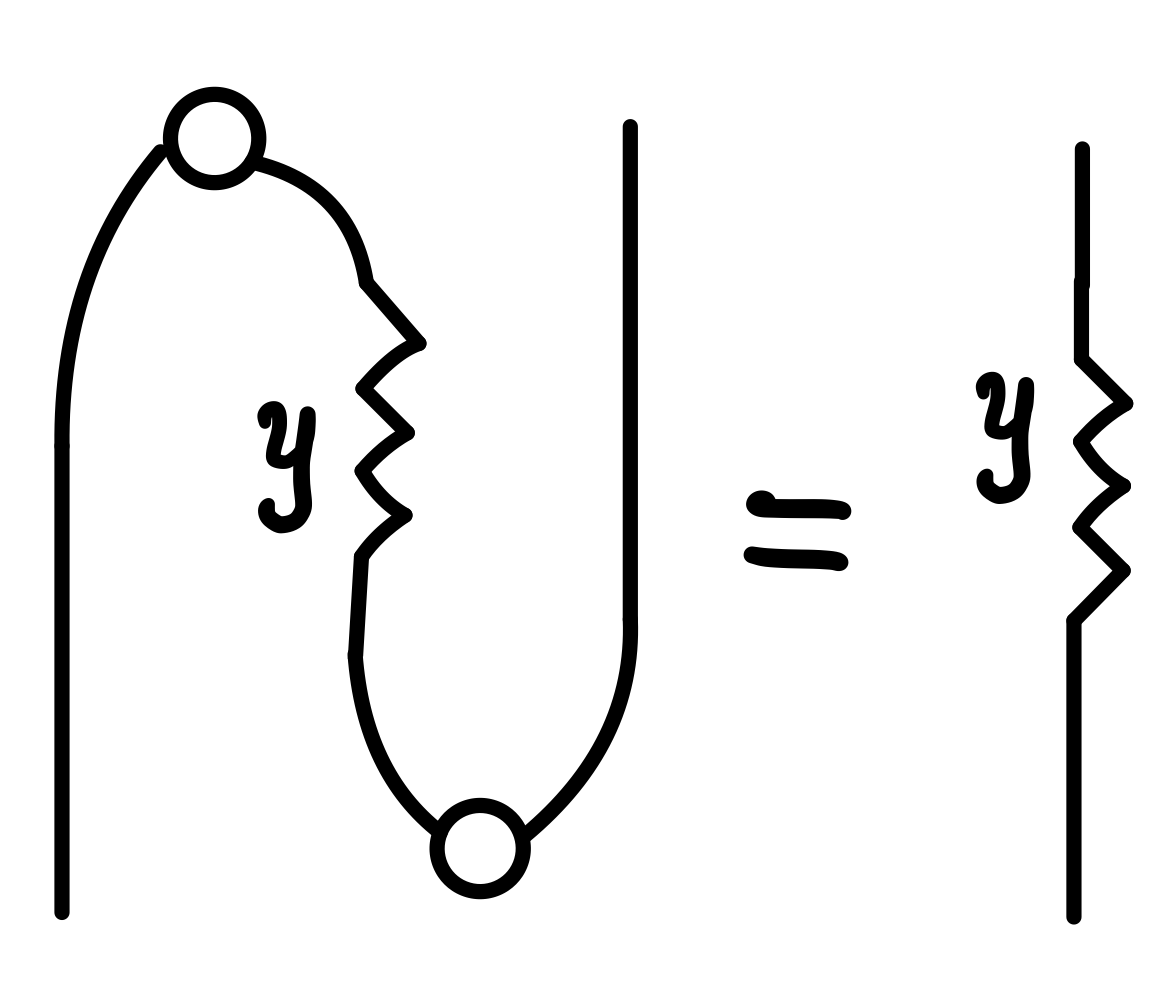} ~~~~~~
[{\sf Short~circuit}]~~ \vcenteredinclude{scale=0.07}{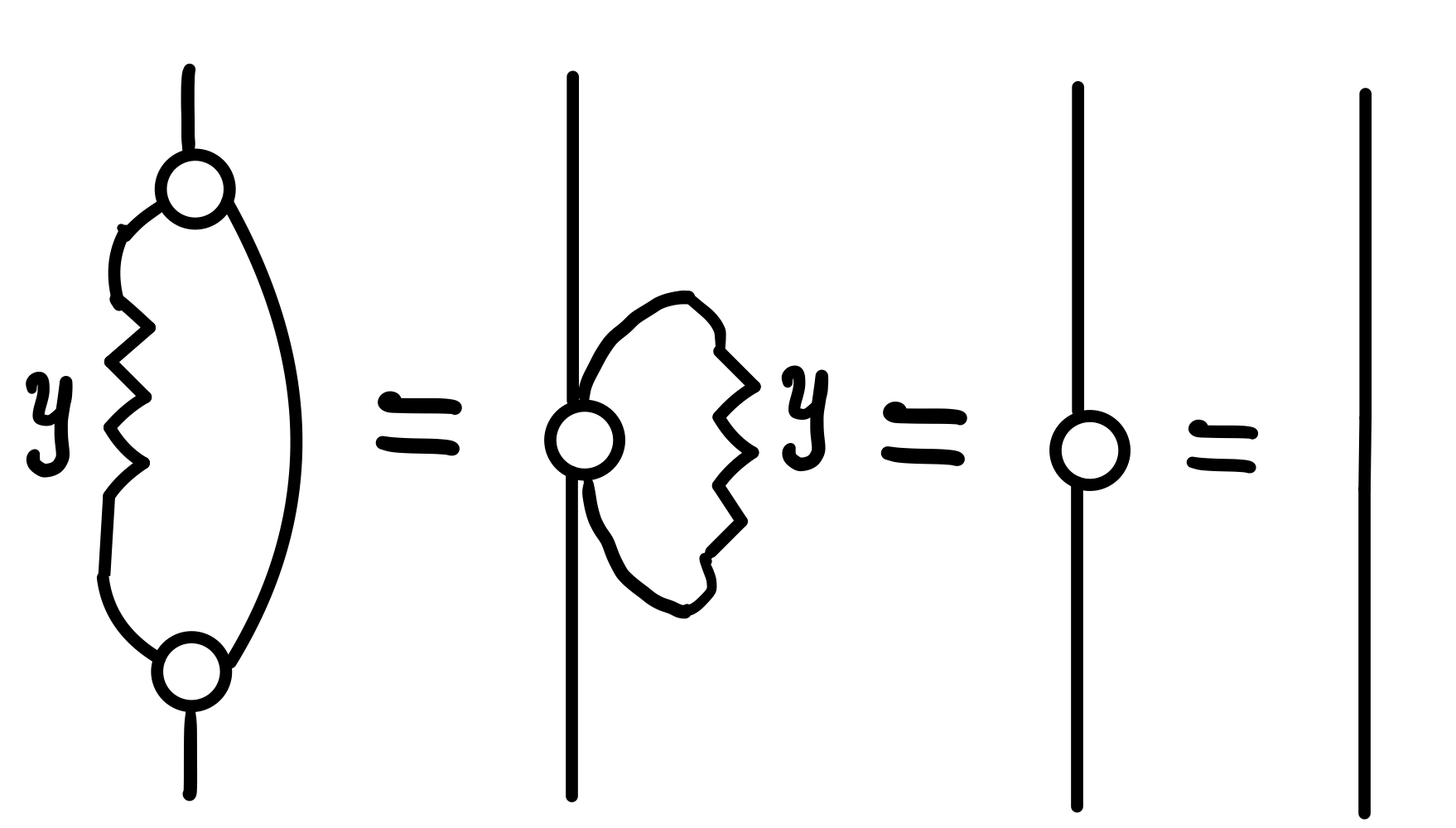} 
\]
\[
[{\sf Parallel}]~~ \vcenteredinclude{scale=0.07}{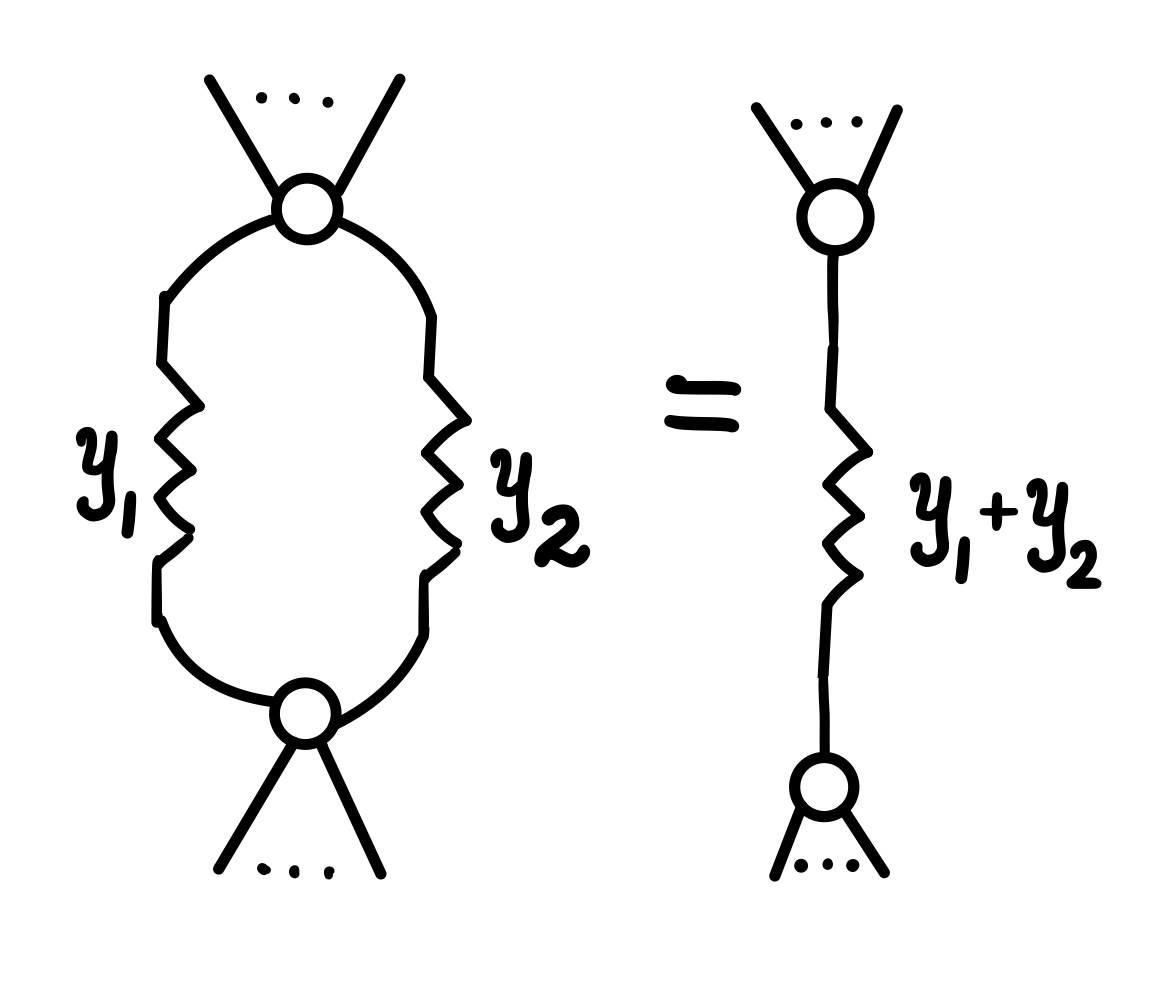}
\]

 Thus, resistors are self-adjoint. The [{\sf Short circuit}] rule states that if there is an infinite conductance in parallel with a conductance of any finite value, the current would take the path of ``least resistance" can flow through the infinite conductance wire.  The [{\sf Parallel rule}] is used to collapse a number of parallel conductance into one conductance. A wire may be thought of as a resistor with infinite conductance.  

The resistors in addition satisfy a family of {\em star-mesh identities}, $(Y/\Delta)_n$.  Each of these identities equates a star resistor network, that is a network which has one {\em internal node} (a node which has connections only within the circuit) to a completely connected graph of resistors with no internal nodes which is referred to as a {\em mesh}. 

\begin{figure}[h]
\centering
\includegraphics[scale=0.1]{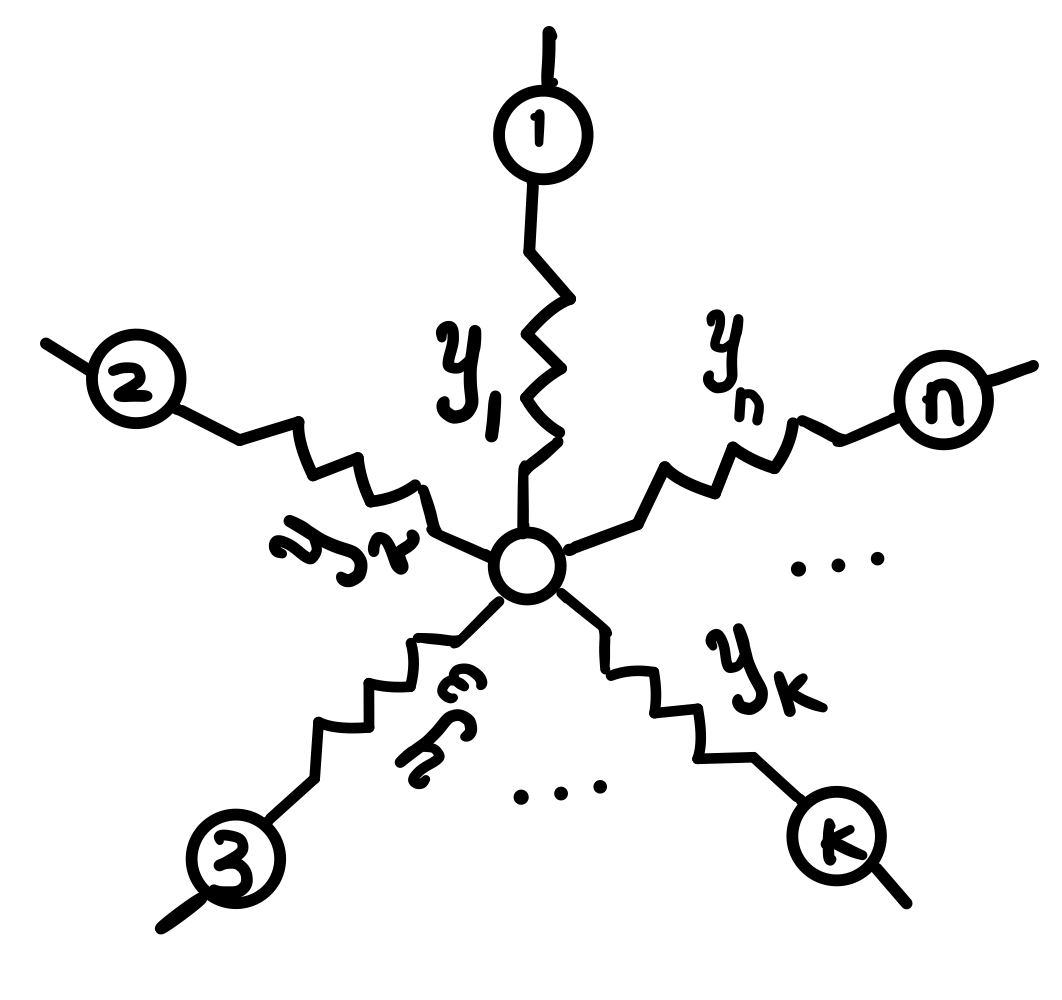}
\caption{n-node star network}
\label{Fig: star}
\end{figure}

\FloatBarrier

Given an n-node star network as shown in Figure \ref{Fig: star}, the corresponding mesh network consists of a completely connected graph with nodes $1, 2, 3, \cdots, n$ in which the edge between each pair of nodes $i$ and $j$ have conductance value,
\begin{equation}
\label{eqn: mesh conductance}
Y_{ij}= \frac{y_{i}y_{j}}{\sum_{k=1}^{n}y_{k}}
\end{equation}



A few special cases of the star-mesh transformation are shown below:
\begin{figure}[!h]
    \begin{minipage}[b]{.2\linewidth}
        \centering
        \includegraphics[scale=0.03]{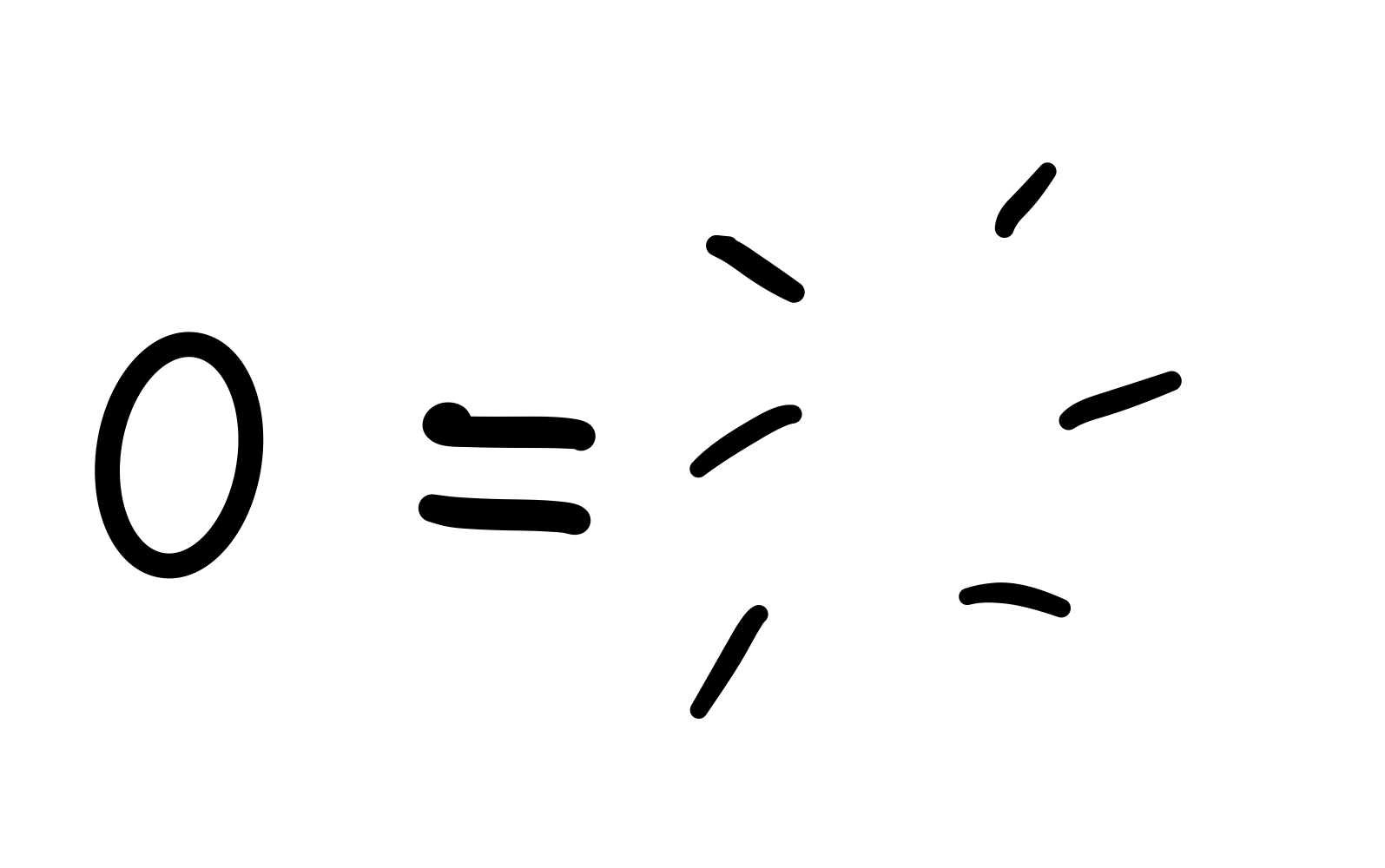}
        {\caption*{(Y/$\Delta$)$_0$}}
            \end{minipage}%
    \begin{minipage}[b]{.2\linewidth}
        \centering
        \includegraphics[scale=0.028]{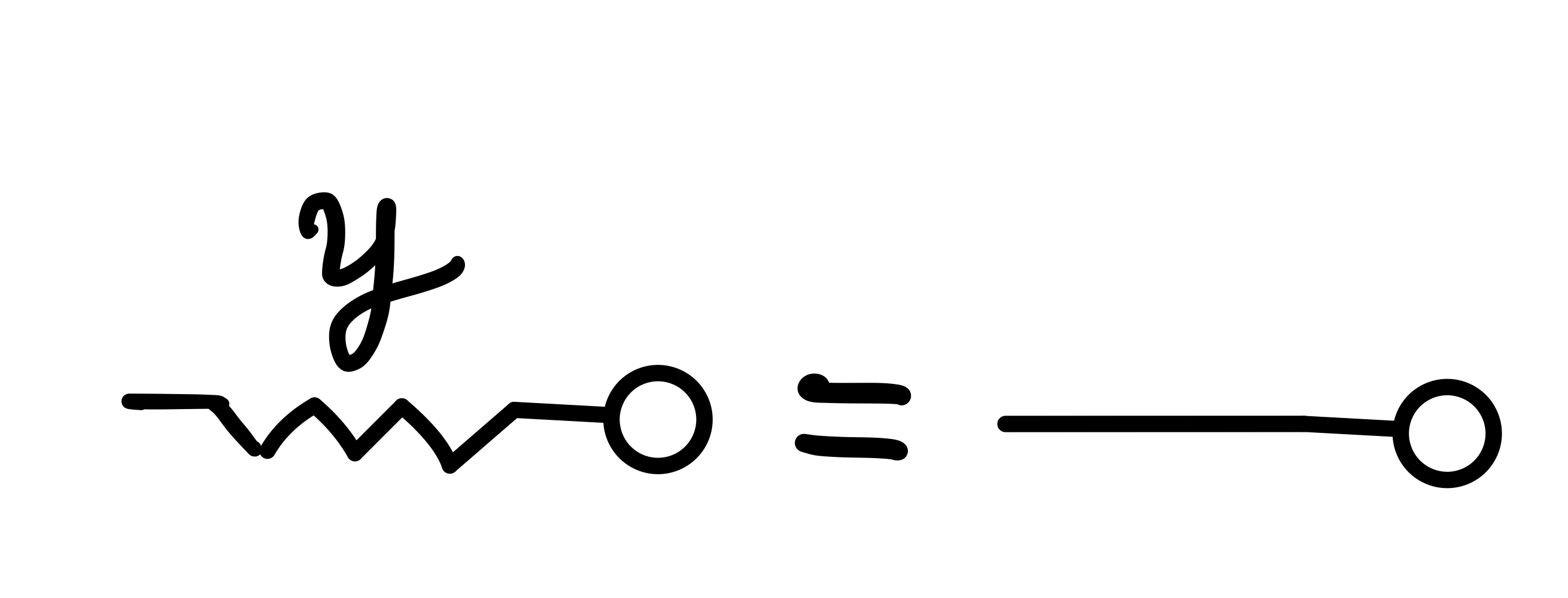}
        {\caption*{(Y/$\Delta$)$_1$}}
    \end{minipage}   
    \begin{minipage}[b]{.2\linewidth}
        \centering
        \includegraphics[scale=0.06]{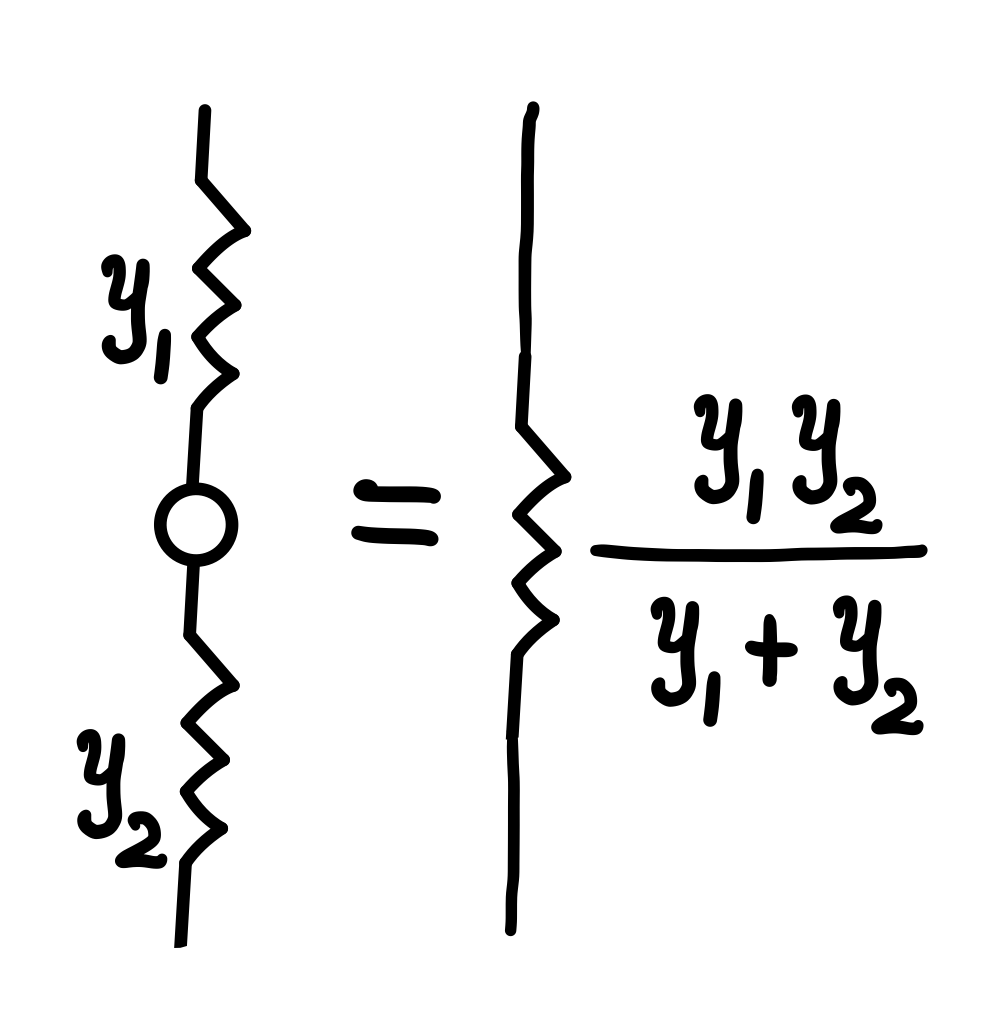}
        {\caption*{(Y/$\Delta$)$_2$}}
    \end{minipage} 
    \begin{minipage}[b]{.3\linewidth}
        \centering
        \includegraphics[scale=0.06]{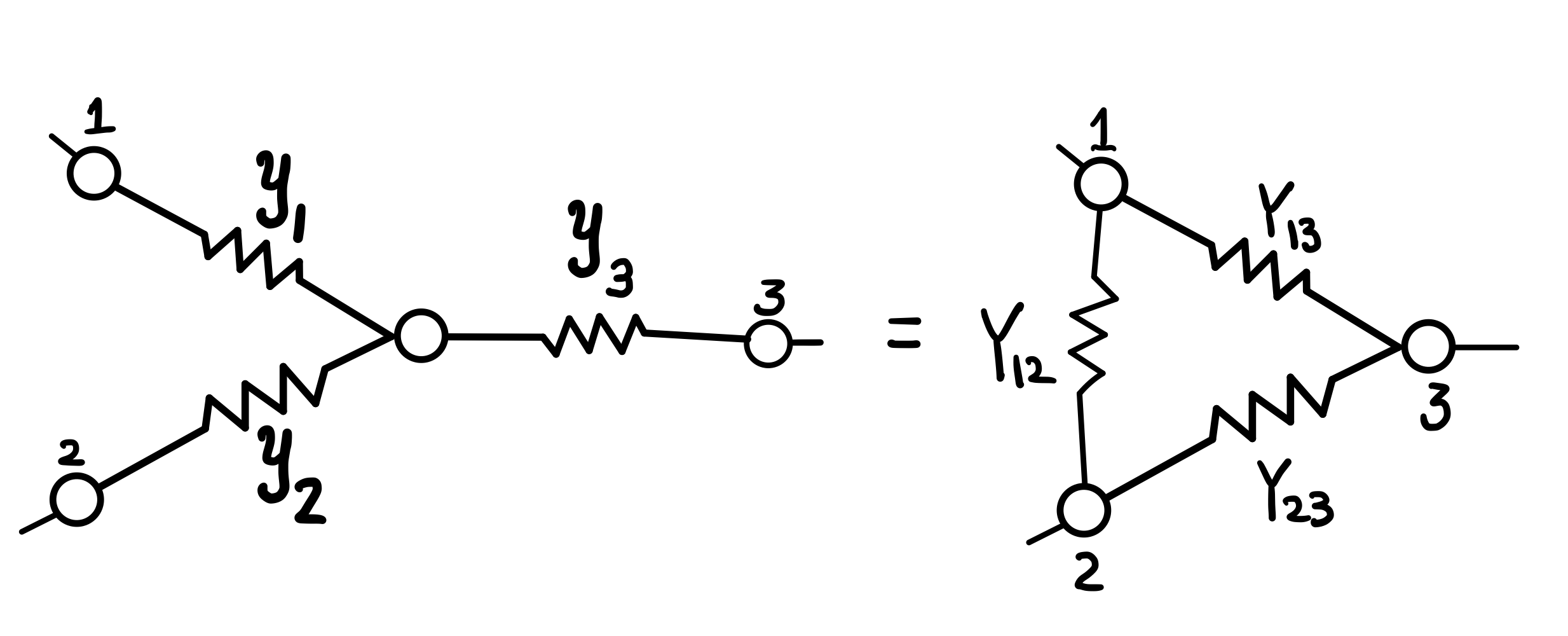}
        {\caption*{(Y/$\Delta$)$_3$}}
    \end{minipage} 
\end{figure}
\FloatBarrier

\section{Rewriting for ${\sf Resist}_R$}

Given any resistor circuit with non-zero conductance on each wire, one can reduce the circuit to a family of meshes, thus to a normal form, by removing all the parallel resistors and the internal nodes using the identities of ${\sf Resist}_R$. Our goal in this section is to prove that the resulting rewriting systems terminates and is confluent, which is the main result of this paper. In order to prove the main result, we observe the following. 

\begin{lemma}  
\label{Lemma: unit-star rewrite}
The unit and star-mesh critical pairs in ${\sf Resist}_R$ resolve. 
\end{lemma}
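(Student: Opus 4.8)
The plan is to read this as the critical-pair step of a Newman-style confluence proof: once termination is in hand (established separately), local confluence suffices, and by the critical-pair lemma local confluence reduces to showing that every minimal overlap of two left-hand sides is joinable. This lemma isolates the overlaps that mention the unit $\circ^0_1$ and the star-mesh rule $(Y/\Delta)_n$; the remaining, purely spider-theoretic, overlaps are handled by the standard [{\sf Spider}] theory. So the first step is a careful enumeration of overlaps. The unit $\circ^0_1$ can overlap a [{\sf Spider}] fusion at a node, and it can meet the central node of a star so that both the unit law and $(Y/\Delta)_n$ are simultaneously enabled. The star-mesh redex can overlap [{\sf Parallel}] (two external legs of the star already joined by a resistor), [{\sf Short~circuit}] (an induced mesh edge lying in parallel with a wire), [{\sf Self-adjoint}] (the orientation of a leg), a [{\sf Spider}] fusion at an external node, and a second star-mesh redex that shares nodes with the first.

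For the unit overlaps I would argue purely formally. A unit meeting the central node of a star is absorbed by the unit law for the special commutative Fr\"obenius algebra, so firing the unit rewrite and then $(Y/\Delta)_n$ lands on the same mesh as applying $(Y/\Delta)_n$ directly; the degenerate transforms $(Y/\Delta)_0$ and $(Y/\Delta)_1$ are used to discard the isolated nodes and dangling legs that the unit leaves behind. None of these resolutions touches the arithmetic of $R$, so they close by diagram manipulation alone.

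The star-mesh overlaps carry the actual content, and I would resolve them with the conductance formula \eqref{eqn: mesh conductance}. The overlap with a [{\sf Spider}] fusion at an external node is immediate from the symmetry of the mesh in its external nodes, and the overlaps with [{\sf Self-adjoint}] and [{\sf Short~circuit}] are equally routine. For [{\sf Parallel}], both reducts collapse to one edge whose conductance is $Y_{ij}$ added to the pre-existing conductance, so equality is on the nose. I expect the genuine obstacle to be the star-mesh against star-mesh overlap, where two internal nodes sharing an edge must be eliminable in either order with the same outcome: here one computes directly with \eqref{eqn: mesh conductance}, interposing [{\sf Parallel}] rewrites to merge the several edges that each elimination creates, and checks that the two resulting families of mesh conductances coincide as a finite identity in $R$. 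Positivity of the division rig is exactly the hypothesis that makes this work, since it guarantees that every denominator $\sum_{k} y_{k}$ met along either elimination order is nonzero, so each division is legitimate and neither rewrite sequence leaves the system.
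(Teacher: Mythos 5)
There is a genuine gap, and it sits exactly where you wave it away. The critical pair this lemma names is not ``a unit meeting the central node of a star'': it is a star whose center $a$ carries legs $y_1,\dots,y_m$ \emph{plus one further leg, a resistor $x$, whose far end $b$ is capped by the unit}. The two overlapping redexes are $(Y/\Delta)_1$ at the dangling node $b$ (which deletes the resistor $x$ outright) and $(Y/\Delta)_{m+1}$ at the center $a$ (which treats $b$ as one of its $m+1$ points). Your purely formal resolution --- absorb the unit by the Fr\"obenius unit law, then fire the star-mesh rule --- applies only when the unit sits directly on the center spider; the unit law cannot reach across the intervening resistor $x$, so it says nothing about this overlap. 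Consequently your claim that ``none of these resolutions touches the arithmetic of $R$'' is false for precisely the pair the lemma is about.

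The resolution is genuinely arithmetic, and that computation is the entire content of the paper's proof. Eliminating $b$ first deletes $x$ and leaves an $m$-star, whose elimination gives mesh conductances $Y^{ba}_{ij} = y_iy_j/\sum_k^m y_k$. Eliminating $a$ first instead produces an $(m+1)$-mesh in which every denominator is $\sum_k^m y_k + x$; the node $b$ then sits at the center of an $m$-star with legs $Y^a_{ix} = y_ix/(\sum_k^m y_k + x)$, and eliminating $b$ deposits new edges $Y^{ab}_{ij}$ in parallel with the existing $Y^a_{ij}$. The two reducts agree only after a $[{\sf Parallel}]$ merge and the identity
\begin{align*}
Y^a_{ij} + Y^{ab}_{ij} \;=\; \frac{y_iy_j}{\sum_k^m y_k + x} + \frac{y_iy_j\,x}{\left(\sum_k^m y_k\right)\left(\sum_k^m y_k + x\right)} \;=\; \frac{y_iy_j}{\sum_k^m y_k} \;=\; Y^{ba}_{ij},
\end{align*}
which uses distributivity and division in $R$ (and positivity to know the denominators are nonzero). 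Your proposal defers all arithmetic to the star-against-star overlap, but that is a different critical pair handled by a separate lemma (Lemma \ref{Lemma: star-star rewrite}); deferring to it leaves the identity above --- the assertion of \emph{this} lemma --- unproven. The surrounding scaffolding in your write-up (Newman's lemma, enumeration of overlaps, positivity guaranteeing legitimate divisions) matches the paper's global strategy and is fine, but the case you dismiss as diagrammatic is exactly the one requiring the computation.
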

\begin{proof}
Consider the star network composed on one of its outgoing edges with the unit, see circuit  (a). 

\[ ( a)~~~ \vcenteredinclude{scale=0.05}{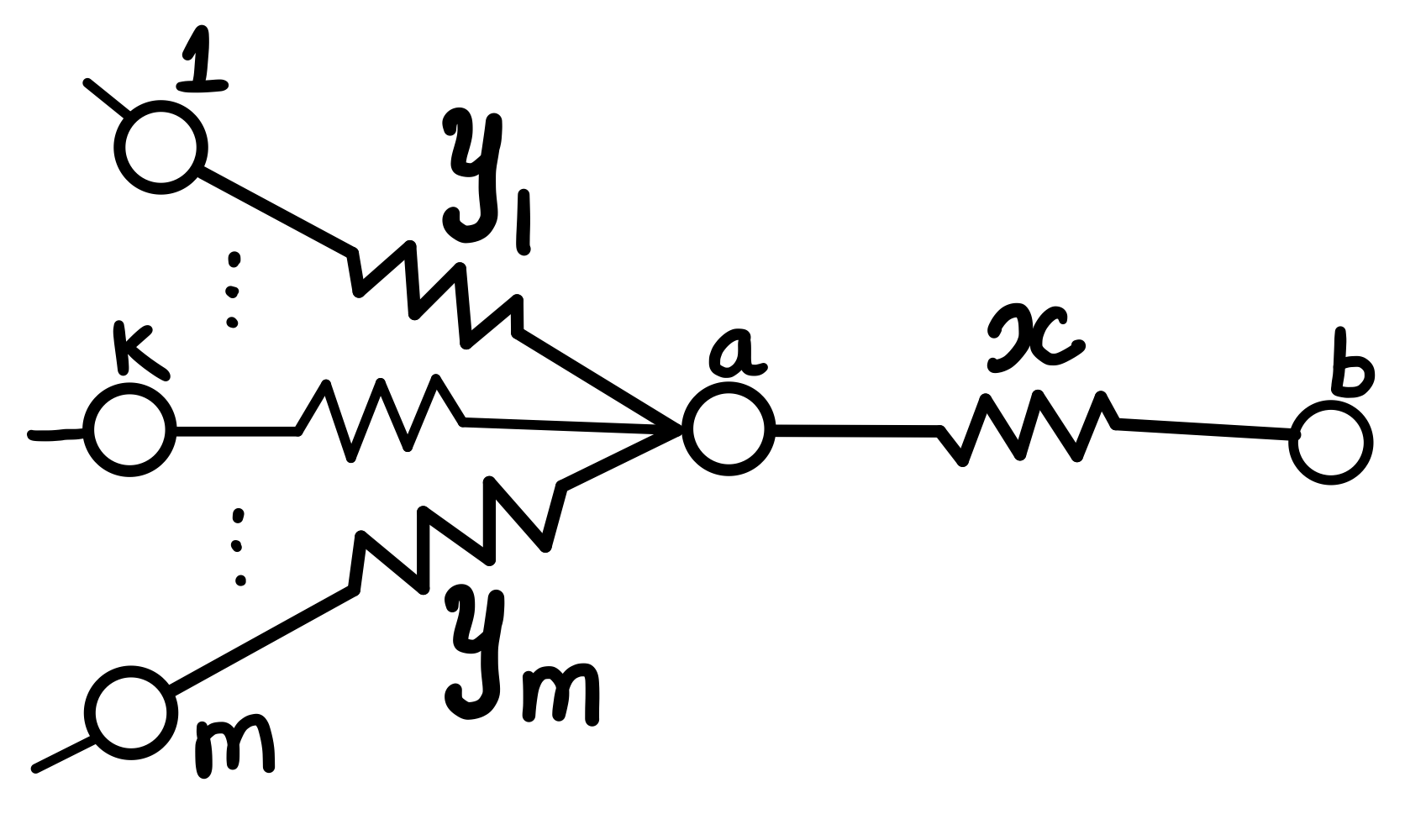} \quad \quad
    (b)~~~ \vcenteredinclude{scale=0.05}{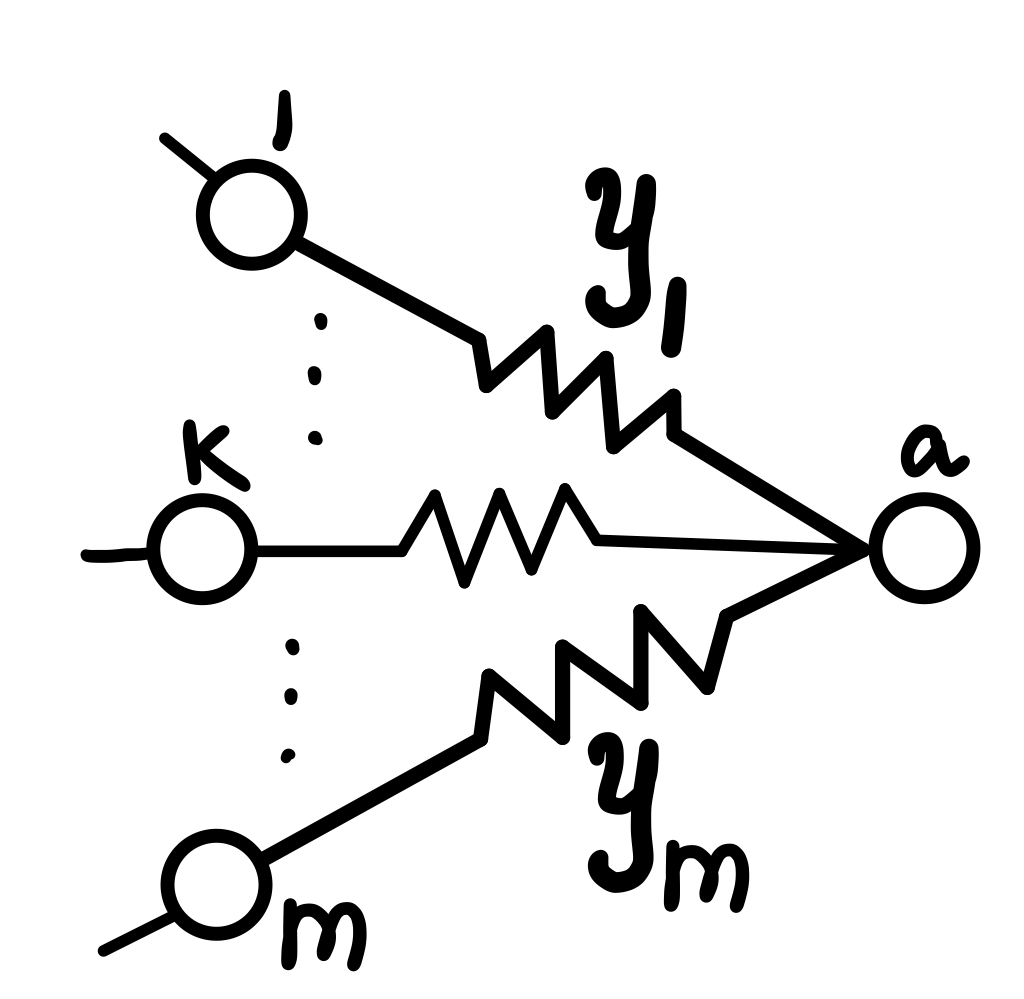} 
\]

To rewrite circuit $(a)$, node $b$ shall be eliminated first by applying $(\text{Y}/\Delta)_1$ or node $a$ shall be eliminated first by applying $(\text{Y}/\Delta)_{m+1}$, thereby resulting in a critical pair. 

Resolving the node $b$ first by applying $(\text{Y}/\Delta)_1$ results in an $m$-node star network (with one internal node $a$), see circuit (b) above. Applying $(Y/\Delta)_m$ to circuit (b) to eliminate node $a$ results in a mesh in which for each pair of nodes $1 \leq i,j \leq m$, the resistor edge connecting them has conductance $Y_{ij}^{ba}$ with value: 
\[Y_{ij}^{ba}  = \frac{y_i y_j}{\sum_k^m y_k} \]

On the other hand, resolving node $a$ first by applying $(Y/\Delta)_{m+1}$ results in a mesh network with $m$ external nodes, each one of which are connected to the internal node $b$, see the figure below. In the resulting circuit, each external node $i$ is connected to node $b$ via a resistor with conductance $Y_{ix}^a$, see equation \ref{eqn: R1}-(a). Every pair of external nodes $i$ and $j$ are connected by a resistor with conductance $Y_{ij}^{a}$, see \ref{eqn: R1}-(b). 
\[ \includegraphics[scale=0.055]{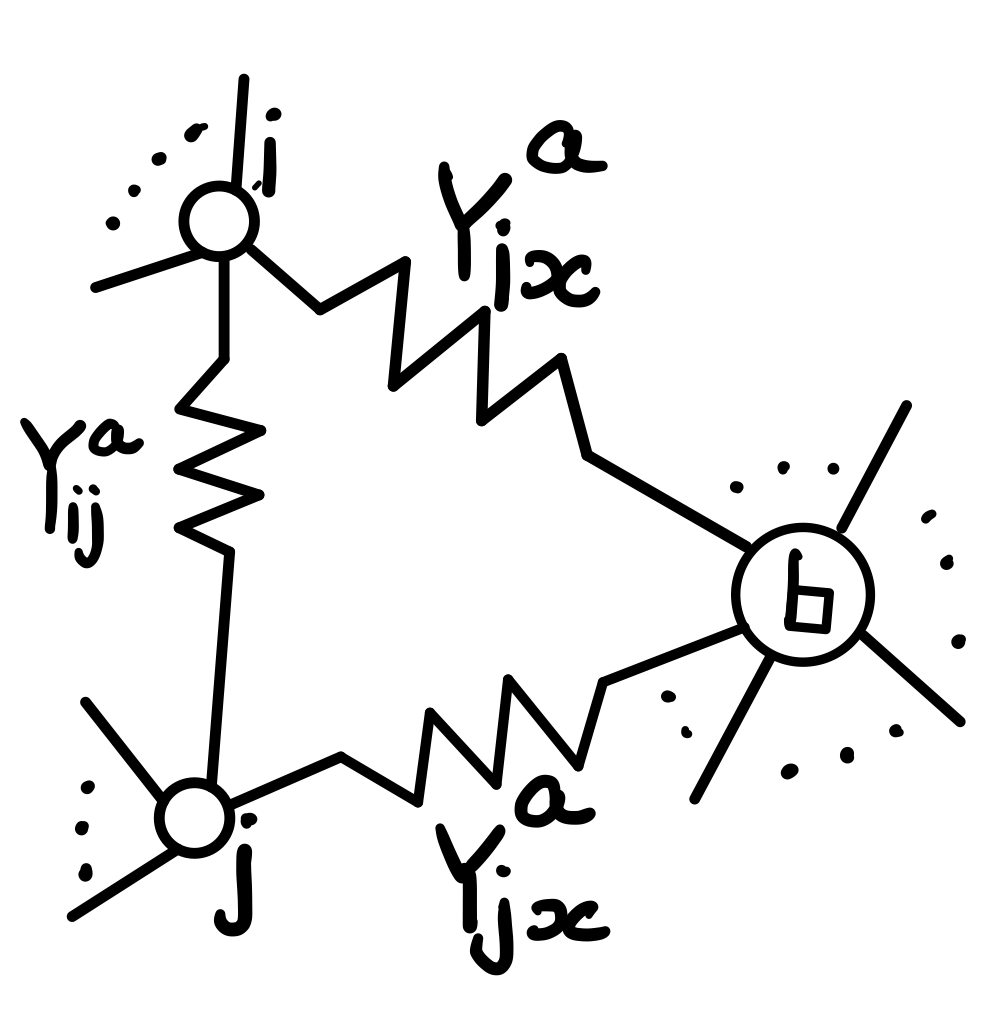} \]
\begin{align} 
\label{eqn: R1}
(a)~~~ Y_{ij}^a = \frac{y_i y_j}{\sum_k^m y_k + x} ~~~~~~~~~ 
(b) ~~~ Y_{ix}^a = \frac{y_i x}{\sum_k^m y_k + x} 
\end{align}

Now, applying $(Y/\Delta)_m$ to resolve node $b$ in the resulting circuit, leads to parallel conductances $Y_{ij}^a$ and $Y_{ij}^{ab}$ between any two nodes $1 \leq i, j \leq m$, see the diagram below. 
\[\vcenteredinclude{scale=0.05}{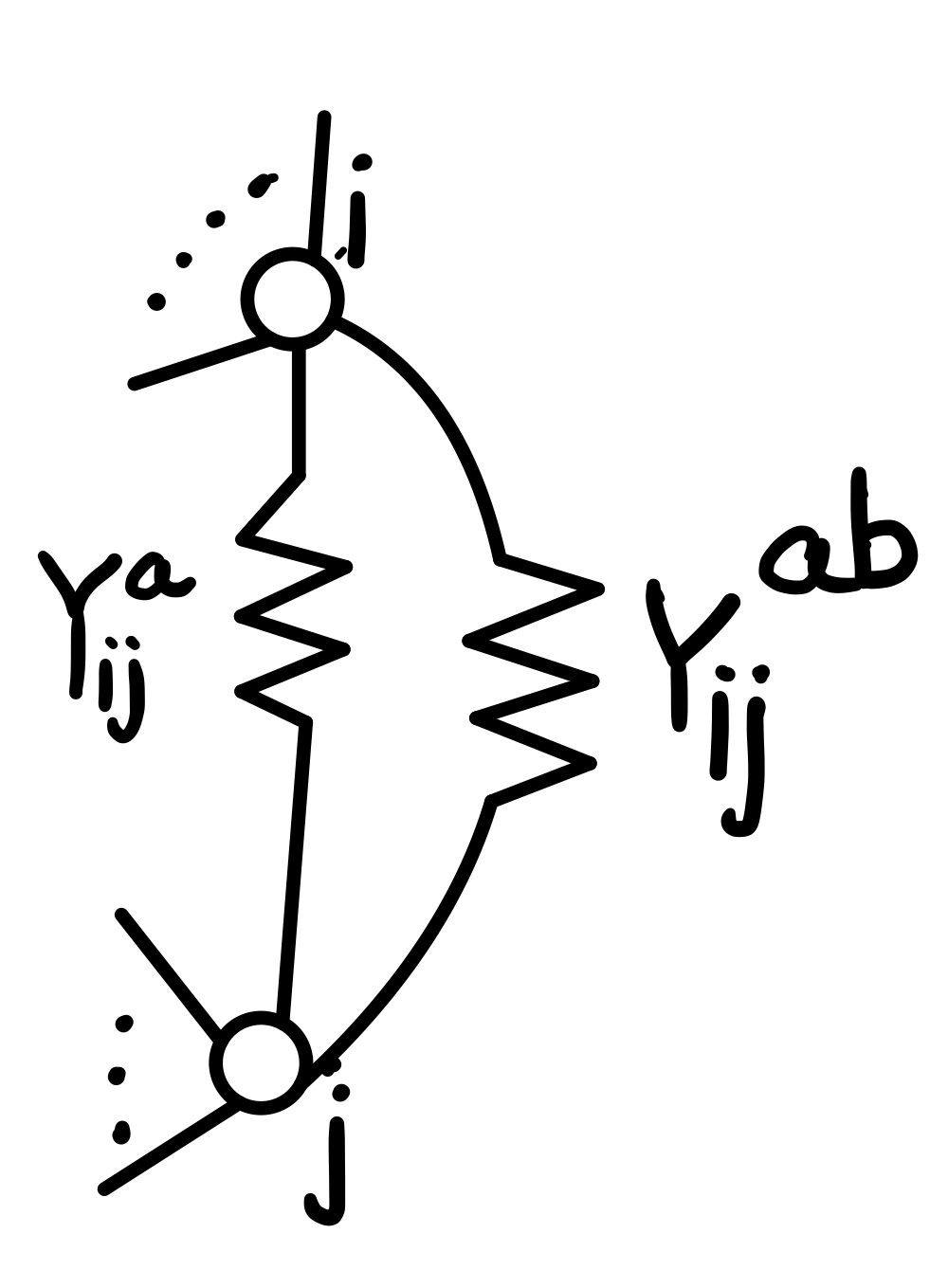}\] 
The value of $Y_{ij}^{ab}$ is computed as follows:
\[ Y_{ij}^{ab} = \frac{Y_{ix}^a Y_{jx}^a}{\sum_k^m Y_{kx}^a} 
= \frac{\frac{y_i x y_j x}{(\sum_k^m y_k + x)^2}}{\frac{\sum_k^m y_k x}{(\sum_k^m y_k + x)}} = \frac{\frac{y_i y_j x}{(\sum_k^m y_k + x)}}{(\sum_k^m y_k )} 
= \frac{y_i y_j x}{(\sum_k^m y_k )(\sum_k^m y_k + x)}\]

Combining the parallel edges, 
\begin{align*}
Y_{ij}^a + Y_{ij}^{ab} &= \frac{y_i y_j}{\sum_k^m y_k + x} + \frac{y_i y_j x}{(\sum_k^m y_k)(\sum_k^m y_k + x)} = \frac{y_i y_j (\sum_k^m y_k) + y_i y_j x}{(\sum_k^m y_k)(\sum_k^m y_k + x)} \\ 
&= \frac{y_i y_j(\sum_k^m y_k+x)}{(\sum_k^m y_k)(\sum_k^m y_k + x)}  = \frac{y_i y_j}{(\sum_k^m y_k)} = Y_{ij}^{ba}
\end{align*}

Thus the two orders of rewriting the circuit in diagram (a) produce equivalent results.
\end{proof}

\begin{lemma}
    \label{Lemma: star-parallel rewrite}
    The parallel and star-mesh critical pairs in ${\sf Resist}_R$ resolve.
 \end{lemma}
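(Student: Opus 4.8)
The plan is to follow the template of Lemma~\ref{Lemma: unit-star rewrite}: exhibit the single overlap that the [{\sf Parallel}] rule can form with a star-mesh redex, reduce it along both available orders, and check that the two outcomes agree. The [{\sf Parallel}] rule fires on a pair of conductances sharing both endpoints, whereas a star-mesh redex is an internal node together with all of its incident edges. Since every leg of a star joins its central node to one of its neighbours, the only way these two redexes can share an edge is when a star centred at an internal node $a$ has two of its legs, say with conductances $y_1$ and $y_1'$, running to the \emph{same} neighbour, which I label node $1$, alongside ordinary legs $y_2, \dots, y_m$ to the remaining neighbours $2, \dots, m$. Writing $S = y_1 + y_1' + \sum_{k=2}^{m} y_k$ for the total conductance at $a$, this ``doubled-leg star'' is the critical configuration to resolve; because $R$ is a positive division rig and each $y_k \neq 0$, the sum $S$ is nonzero and invertible, so every conductance written below is well-defined.

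First I would reduce in the order \emph{parallel, then star}. The [{\sf Parallel}] rule merges the two legs at node $1$ into a single leg of conductance $y_1 + y_1'$, leaving an ordinary $m$-leg star whose total conductance is still $S$. Applying $(Y/\Delta)_m$ to eliminate $a$ then produces a mesh on nodes $1, \dots, m$ with conductance $\frac{(y_1+y_1')\,y_j}{S}$ on each edge joining node $1$ to a node $j \geq 2$, and $\frac{y_i y_j}{S}$ on each edge joining a pair $i,j \geq 2$.

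Next I would reduce in the order \emph{star, then parallel}. Here I apply $(Y/\Delta)_{m+1}$ directly, treating the two legs to node $1$ as distinct, so that the same total $S$ appears in every denominator. The legs $y_1$ and $y_1'$ now contribute, for each $j \geq 2$, two parallel edges $\frac{y_1 y_j}{S}$ and $\frac{y_1' y_j}{S}$ between node $1$ and node $j$; they contribute $\frac{y_i y_j}{S}$ for each pair $i,j\geq 2$ exactly as before; and they additionally produce a loop at node $1$ of conductance $\frac{y_1 y_1'}{S}$. After the self-loop is discarded, the [{\sf Parallel}] rule combines the two remaining edges into $\frac{y_1 y_j}{S} + \frac{y_1' y_j}{S} = \frac{(y_1+y_1')\,y_j}{S}$, which is precisely the conductance obtained in the first order. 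Hence both orders yield the same mesh and the critical pair resolves.

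The arithmetic is lighter than in Lemma~\ref{Lemma: unit-star rewrite}, since the denominator $S$ is identical on both branches and no rebalancing of sums is required. The step I expect to demand the most care is the disappearance of the conductance self-loop produced in the second order: I must argue that a resistor both of whose terminals lie on one node is electrically inert and is removed by the Fr\"obenius/spider machinery, by fusing its two endpoints and then applying the special loop rule [{\sf Spider}]-(b), rather than surviving as a spurious scalar factor. I would also note explicitly that the computation is insensitive to whether the shared neighbour (node $1$) is an external node or a second internal node, so this one overlap accounts for every way the [{\sf Parallel}] and star-mesh rules can interact.
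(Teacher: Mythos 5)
Your resolution follows, in outline and in arithmetic, the same route as the paper's proof: the overlap is a star two of whose legs run to a common neighbour, one branch applies [{\sf Parallel}] and then the star-mesh rule, the other applies the star-mesh rule first, and both branches terminate in the mesh with conductances $\frac{(y_1+y_1')\,y_j}{S}$ on the edges at node $1$ and $\frac{y_i y_j}{S}$ elsewhere. Those computations are correct, and your observation that the common denominator $S$ makes the arithmetic trivial (unlike Lemma \ref{Lemma: unit-star rewrite}) is accurate.

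However, the step you yourself flag as the delicate one --- disposing of the self-loop of conductance $\frac{y_1 y_1'}{S}$ created when two boundary wires of the mesh are glued to the same node --- is not justified by the mechanism you cite. The special rule [{\sf Spider}]-(b) removes a loop of \emph{bare wire} on a spider; a loop carrying a resistor, $\Delta(1 \ox y)\nabla$, is not an instance of it, so invoking [{\sf Spider}]-(b) here proves nothing. The removal is derivable, but it needs [{\sf Short~circuit}]: de-fuse the node into two spiders joined by a bare wire, with the resistor's two ends landing on different spiders (an irreducible [{\sf Expansion}], i.e.\ spider fusion read backwards); the resistor is then in parallel with a wire, so [{\sf Short~circuit}] deletes it, and spider fusion merges the node back. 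This is exactly how the paper's proof is organized, except that it performs the [{\sf Expansion}] \emph{before} the star-mesh rewrite: node $1$ is split into nodes $1$ and $s$ joined by a wire, $(Y/\Delta)_n$ then fires with all $n$ endpoints distinct, the would-be self-loop materializes instead as the mesh edge between $1$ and $s$ in parallel with the bare wire, and [{\sf Short~circuit}] followed by spider fusion and [{\sf Parallel}] completes the branch. So your argument is repairable, and the repair is precisely the paper's ordering of the same rules; but as written, the justification of its one nontrivial step rests on a rule that does not apply.
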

\begin{proof}
Consider the star network composed with two of its edges connected in parallel, see circuit (a). 

\[ ( a)~~~ \vcenteredinclude{scale=0.05}{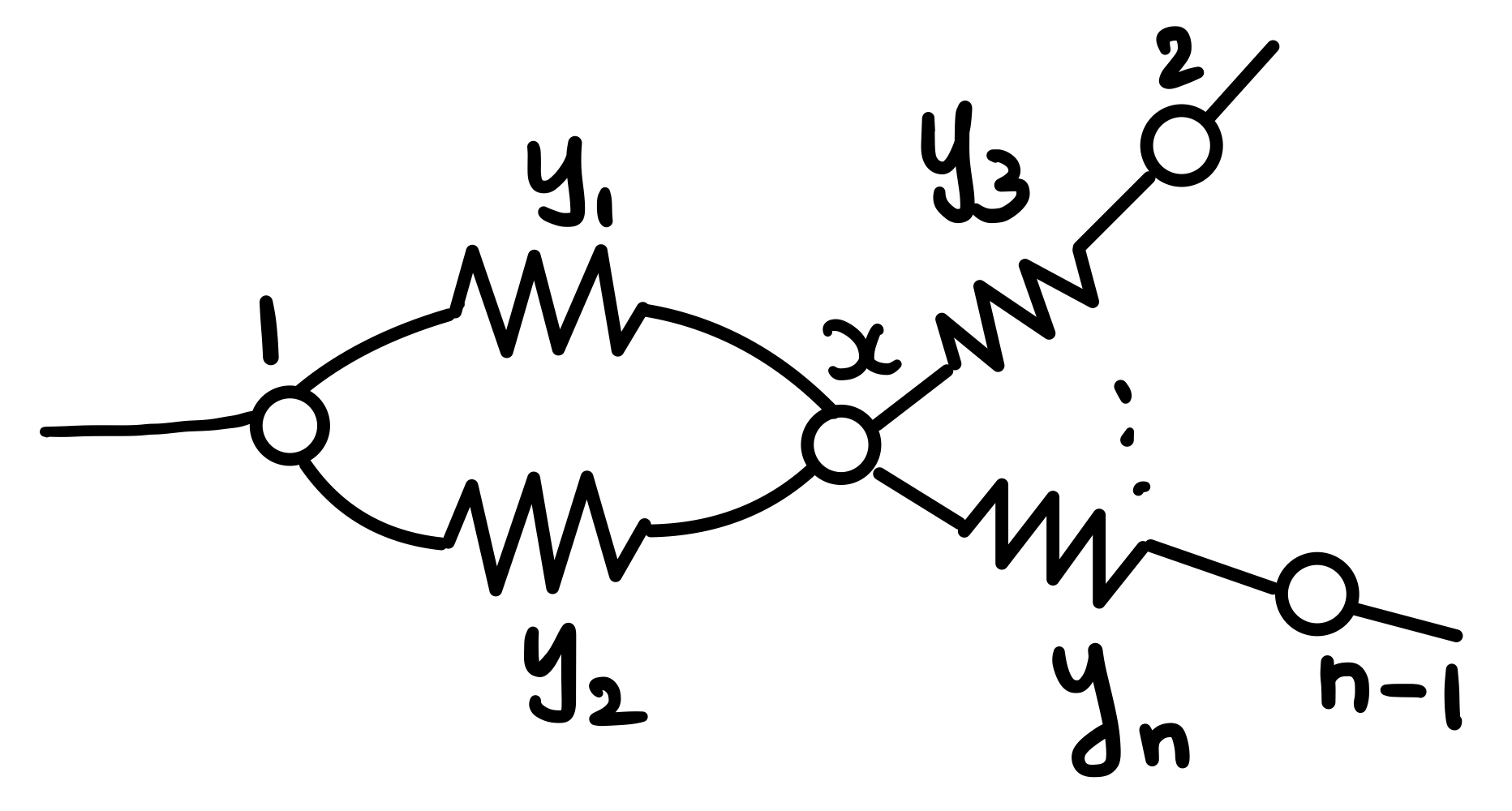} \quad \quad
    (b)~~~ \vcenteredinclude{scale=0.05}{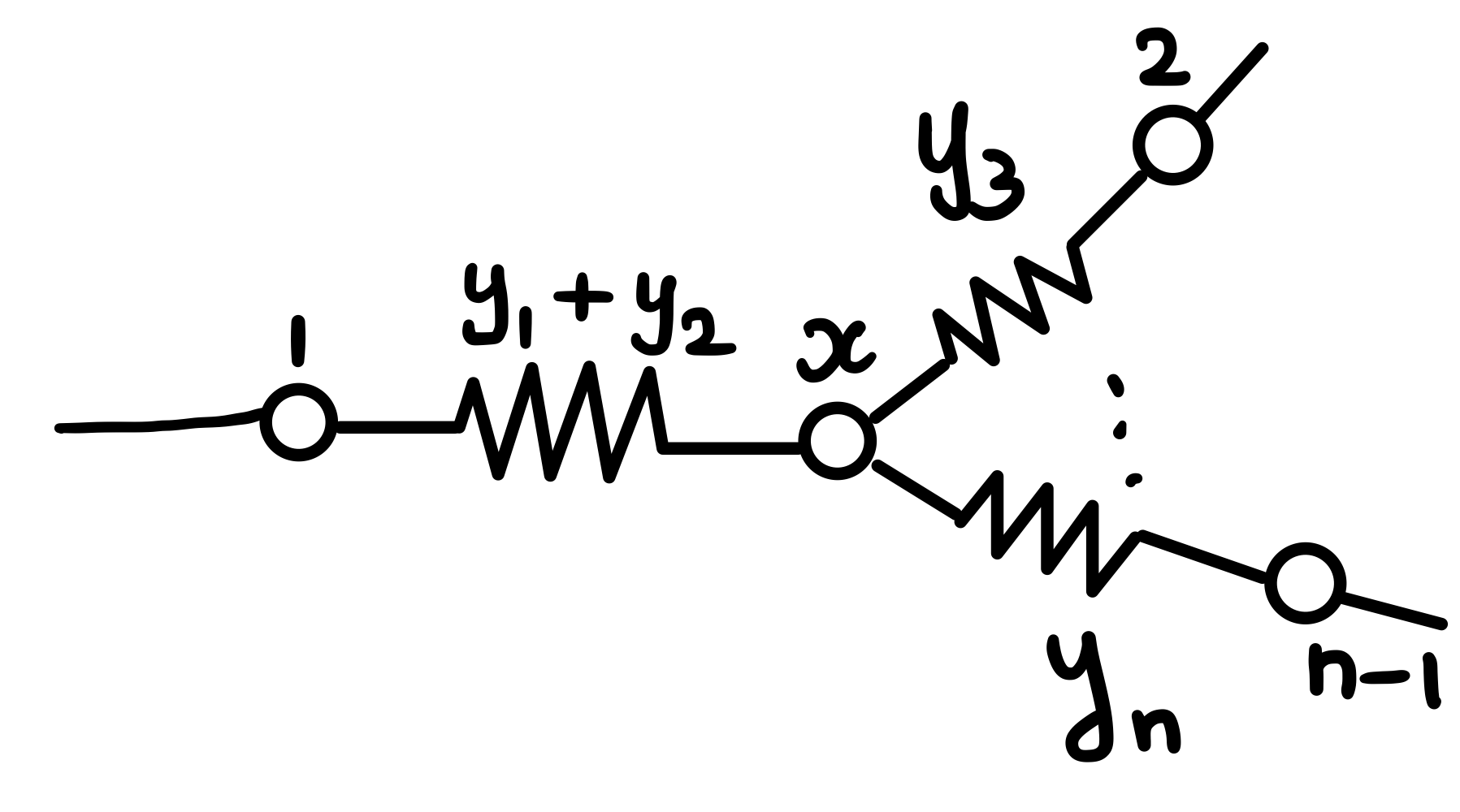} 
\]

To rewrite circuit $(a)$, one can first apply {\sf [Parallel]} rewrite rule resulting in circuit $(b)$. Applying (Y/$\Delta$)$_{n-1}$ on circuit $(b)$ results in a mesh with $n-1$ nodes with conductance value between any two nodes as follows: For all $2 \leq j, k \leq n-1$,

\begin{align*}
    (a)~~ Y_{1k} = \frac{(y_1 + y_2) y_k}{\sum_i^{n} y_i} ~~~~~~~~ (b)~~~ Y_{jk} = \frac{y_j y_k}{\sum_i^{n} y_i}
\end{align*}

Alternatively, to rewrite circuit $(a)$, one can apply the spider {\sf [Expansion]} rule resulting in circuit $(c)$ shown below. Applying, (Y/$\Delta$)$_{n}$ on circuit $(c)$, results in  $n$-node mesh with a parallel edge of infinite conductance between nodes $1$ and $s$ as shown in circuit $(d)$. Applying {\sf[Short circuit]} rule on circuit $(d)$ results in parallel edge between node $1$, and any other node $k$ where $2 \leq k \leq$, see circuit $(f)$. In the pictures below we use $\sigma$ to denote the sum of the conductances, $\sigma=\sum_{i=1}^{n}y_{i}$

\[ (c)~~~ \vcenteredinclude{scale=0.07}{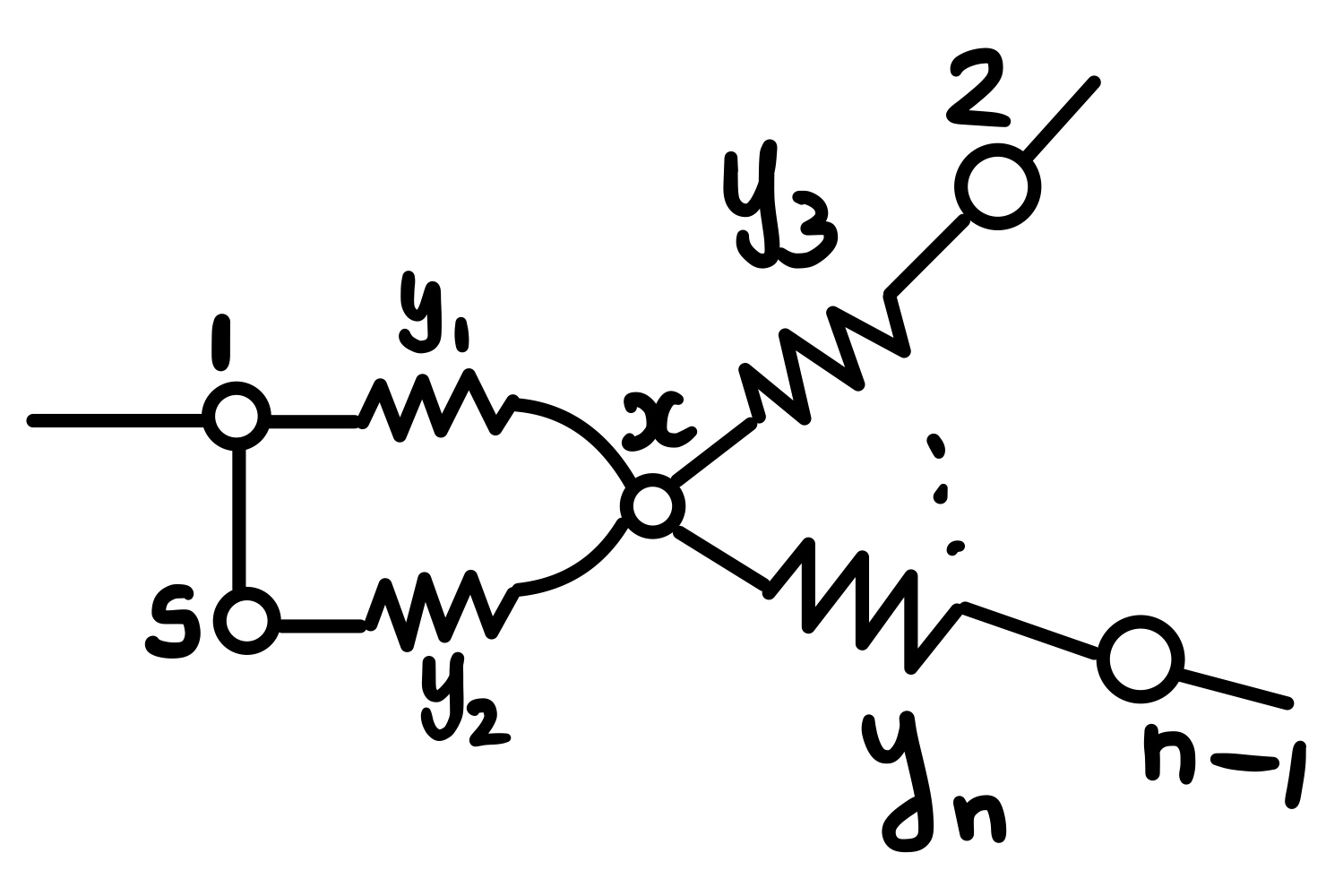} \quad \quad
   (d)~~~ \vcenteredinclude{scale=0.07}{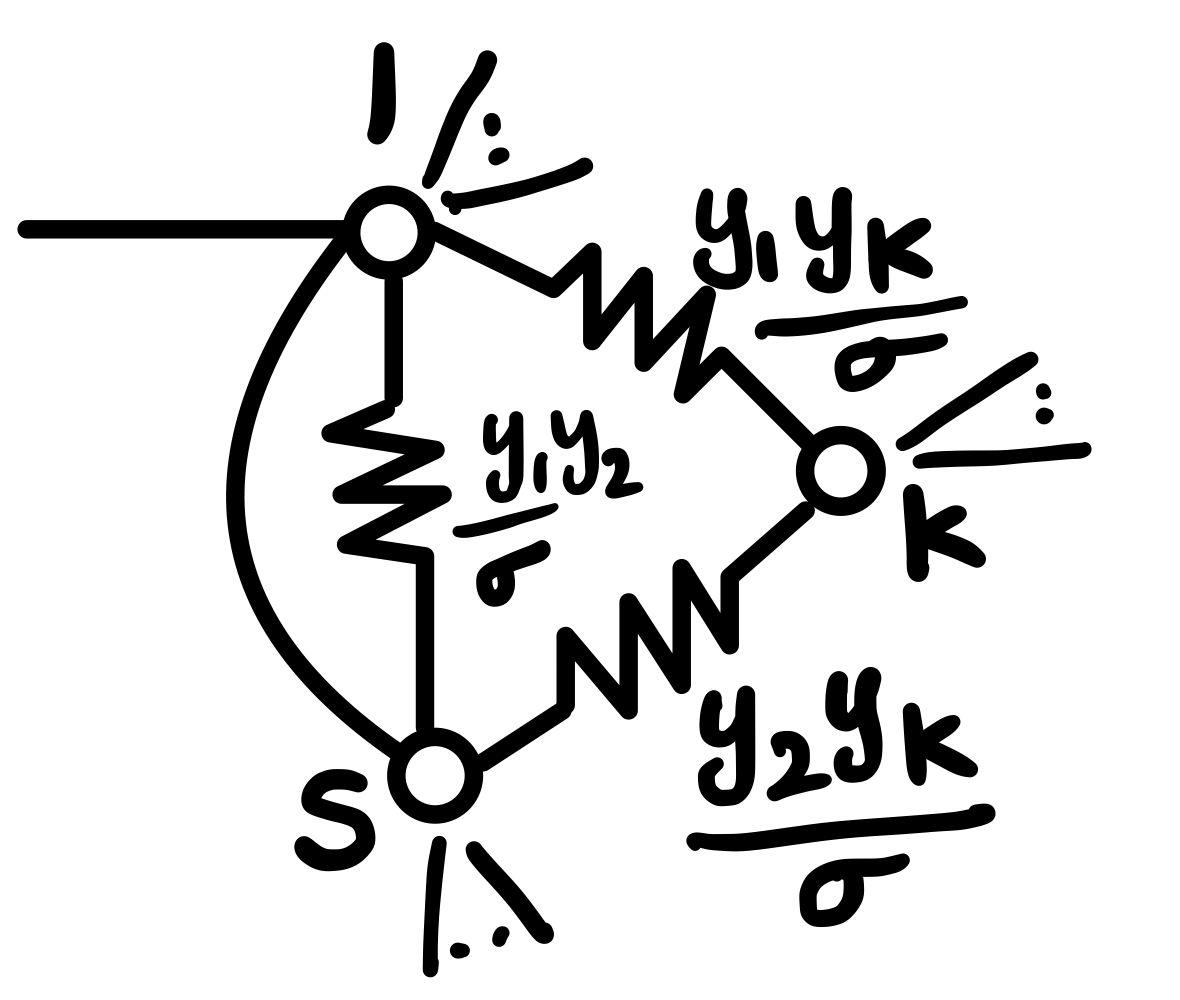} \quad \quad
   (f)~~~ \vcenteredinclude{scale=0.07}{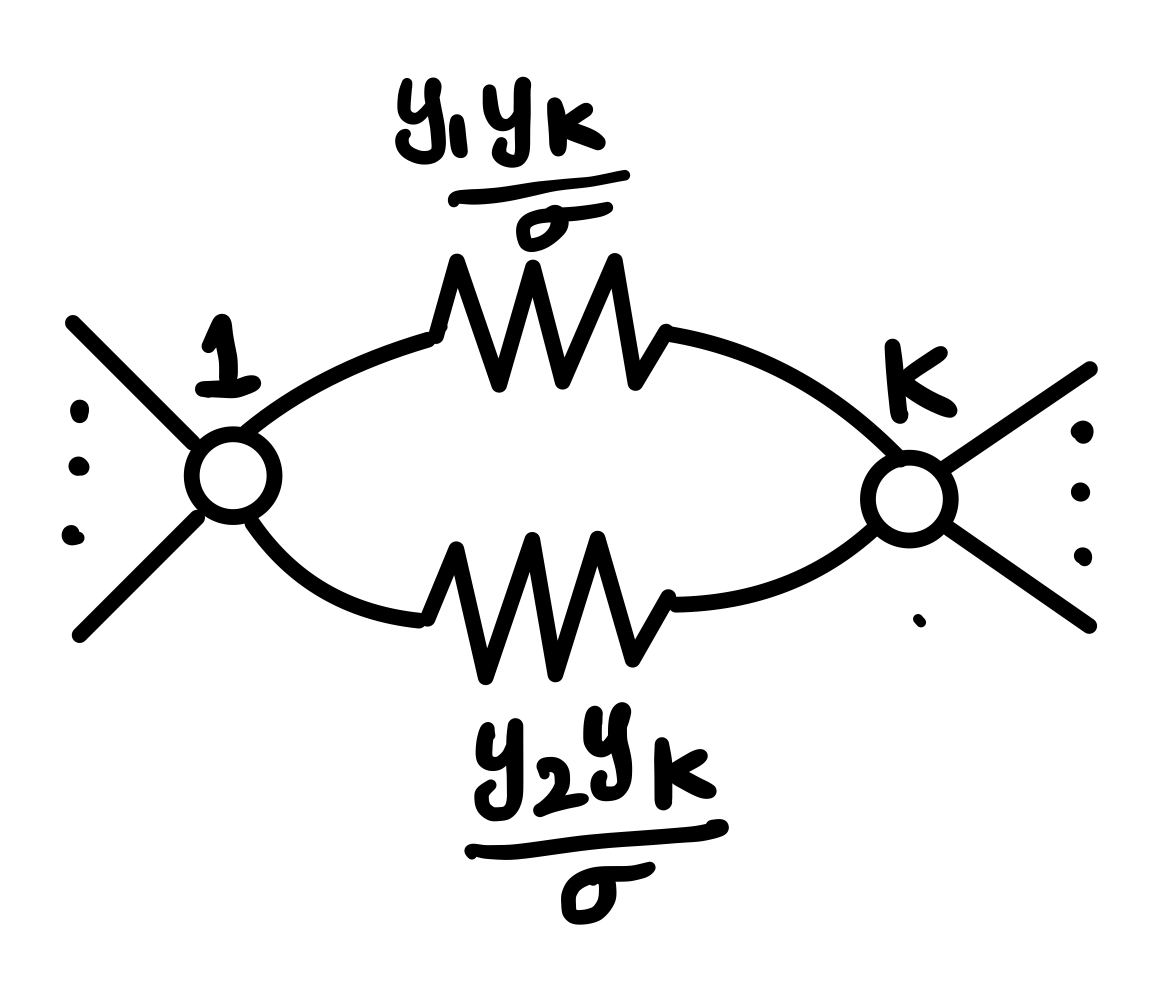} 
\]
Using {\sf [Parallel]} rule in the mesh, for all $2 \leq k \leq n-1$, 
\[ Y_{1k} = \frac{(y_1 + y_2) y_k}{\sum_i^n y_i} \]
Moreover, by (Y/$\Delta$)$_{n}$ rule, for all $2 \leq j,k \leq n-1$,
\[ Y_{jk} = \frac{y_j y_k}{\sum_i^{n} y_i} \]
\end{proof}

\begin{lemma}
\label{Lemma: star-star rewrite}
The star-mesh critical pairs in ${\sf Resist}_R$ can be resolved.
\end{lemma}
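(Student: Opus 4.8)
The plan is to resolve the star--star critical pair by the same explicit method used in the two previous lemmas: write down the two rewrite orders and show they reach a common circuit. The overlap consists of two adjacent internal nodes $a$ and $b$ joined by an edge of conductance $x$; apart from this shared edge, $a$ is the centre of a star reaching external nodes with conductances $y_1,\dots,y_m$ and $b$ is the centre of a star reaching external nodes with conductances $z_1,\dots,z_p$. Writing $S=\sum_i y_i$ and $Z=\sum_j z_j$, I set $\sigma_a=x+S$ and $\sigma_b=x+Z$ for the total conductances at the two internal nodes. The critical pair is the choice of eliminating $a$ first, by $(Y/\Delta)_{m+1}$, or $b$ first, by $(Y/\Delta)_{p+1}$, and the goal is to show both normalise to the same mesh on the external nodes.

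First I would eliminate $a$. This produces the edge $y_iy_{i'}/\sigma_a$ between each pair of $a$'s external neighbours and the edge $xy_i/\sigma_a$ from each of them to $b$, so that $b$ becomes an internal star of total conductance $\sigma_b'=Z+xS/\sigma_a$. Eliminating $b$ next creates, among the cross and $b$-group edges, a second edge $y_iy_{i'}x^2/(\sigma_a^2\sigma_b')$ between each pair of $a$'s former neighbours, which [{\sf Parallel}] then merges with the first. Running the symmetric computation with $b$ eliminated first gives the analogous conductances with $(S,\sigma_a)$ and $(Z,\sigma_b)$ interchanged and $\sigma_a'=S+xZ/\sigma_b$. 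Separating the edges into three families --- within $a$'s group, within $b$'s group, and across the groups --- reduces the lemma to a short list of identities between these expressions.

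The whole comparison then collapses onto two algebraic facts. Setting $D=xS+SZ+xZ$, one checks straight from the distributive law that $\sigma_a\sigma_b'=\sigma_b\sigma_a'=D$ and that $D+x^2=\sigma_a\sigma_b$. The first identity makes the cross edges $xy_iz_j/(\sigma_a\sigma_b')$ and $xy_iz_j/(\sigma_b\sigma_a')$ agree at once and gives $\sigma_a'=D/\sigma_b$; the second then converts the merged conductance $y_iy_{i'}\bigl(1/\sigma_a+x^2/(\sigma_a^2\sigma_b')\bigr)$ of the $a$-first order into the single conductance $y_iy_{i'}/\sigma_a'$ of the $b$-first order, and symmetrically for $b$'s group. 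Both identities are purely additive and use no subtraction, so they hold verbatim in any commutative rig.

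The two points needing care are bookkeeping and well-definedness, and I expect the bookkeeping to be the main obstacle. Eliminating the second internal node manufactures parallel edges that must be recombined before the normal forms can be compared, and keeping the three families of edges apart while exploiting the $a\leftrightarrow b$ symmetry is what keeps this tractable. Well-definedness is exactly what positivity of $R$ secures: each denominator above --- $\sigma_a$, $\sigma_b$, $\sigma_a'$, $\sigma_b'$ and $D$ --- is a sum containing at least one nonzero conductance, so it is nonzero and every division is legitimate. Finally I would observe that external nodes shared by $a$ and $b$ contribute only further parallel edges, absorbed by the same [{\sf Parallel}] step, so the disjoint configuration already captures the critical pair.
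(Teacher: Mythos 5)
Your proposal is correct and takes essentially the same route as the paper: both resolve the star--star critical pair by explicitly carrying out the two elimination orders, splitting the resulting edges into the three families (within $a$'s group, within $b$'s group, and cross edges), and checking that the conductances agree after the $[{\sf Parallel}]$ merges, with positivity guaranteeing all denominators are nonzero. Your packaging of the algebra through the symmetric quantity $D = xS + SZ + xZ$ and the identities $\sigma_a\sigma_b' = \sigma_b\sigma_a' = D$ and $D + x^2 = \sigma_a\sigma_b$ is simply a cleaner organization of the very computations the paper performs by direct fraction manipulation (its denominators all reduce to this same $D$).
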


\begin{proof} 

Consider the circuit shown below. We must show that eliminating node $a$ first by applying $(Y/\Delta)_{m+1}$ and then node $b$ by applying $(Y/\Delta)_{n+m}$ yields the same result as eliminating node $b$ first by applying $(Y/\Delta)_{n+1}$ and then node $a$ by applying $(Y/\Delta)_{n+m}$. 

\[ \includegraphics[scale=0.05]{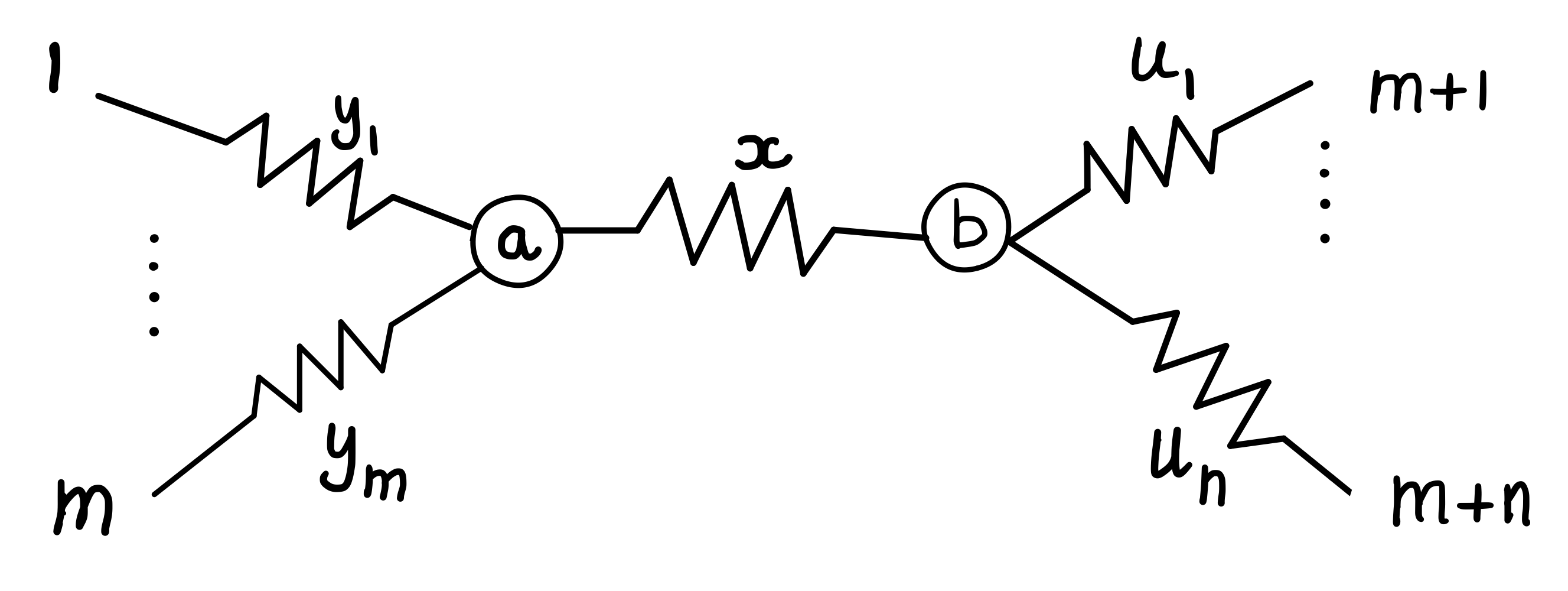}\]

Note that, in order to rewrite a star network all its arms must have non-zero conductances (these would result in an open circuit) nor can there be any infinite conductances (these would be a bare wire).

We first develop a general algorithm for naming the new edges resulting from each rewrite of the circuit. The algorithm is as follows:

{\bf Algorithm for labelling edges:} 
\begin{enumerate}
\item At each step, the new edges always carry the names of the eliminated nodes as superscript in the order of their elimination. 
\item The subscripts (in general) refer to the index of resistors being combined -- $Y_{ij}^a$ is given by combining $y_i$ and $y_j$ by eliminating $a$ with $i \leq j$; combining the edges $Y_{ix}^a$ and $Y_{jx}^a$ by eliminating $b$ gives $Y_{ij}^{ab}$; $Y_{ix}^a$ is given by combining the edge $y_i$ with $x$ eliminating node $a$. 
\item Whenever only `$y_i$' resistors are combined create to a new edge, the new edge carries label `Y' with appropriate super and subscripts; label `$\phi$' means a $y$ and a $u$ resistor are combined; label `$U$' means only $u$ resistors have been combined. 
\end{enumerate}

To make the naming procedure clear, we consider a simple case shown in Figure \ref{fig:confluence example}:

\begin{figure}[h]
    \centering
    \includegraphics[scale=0.05]{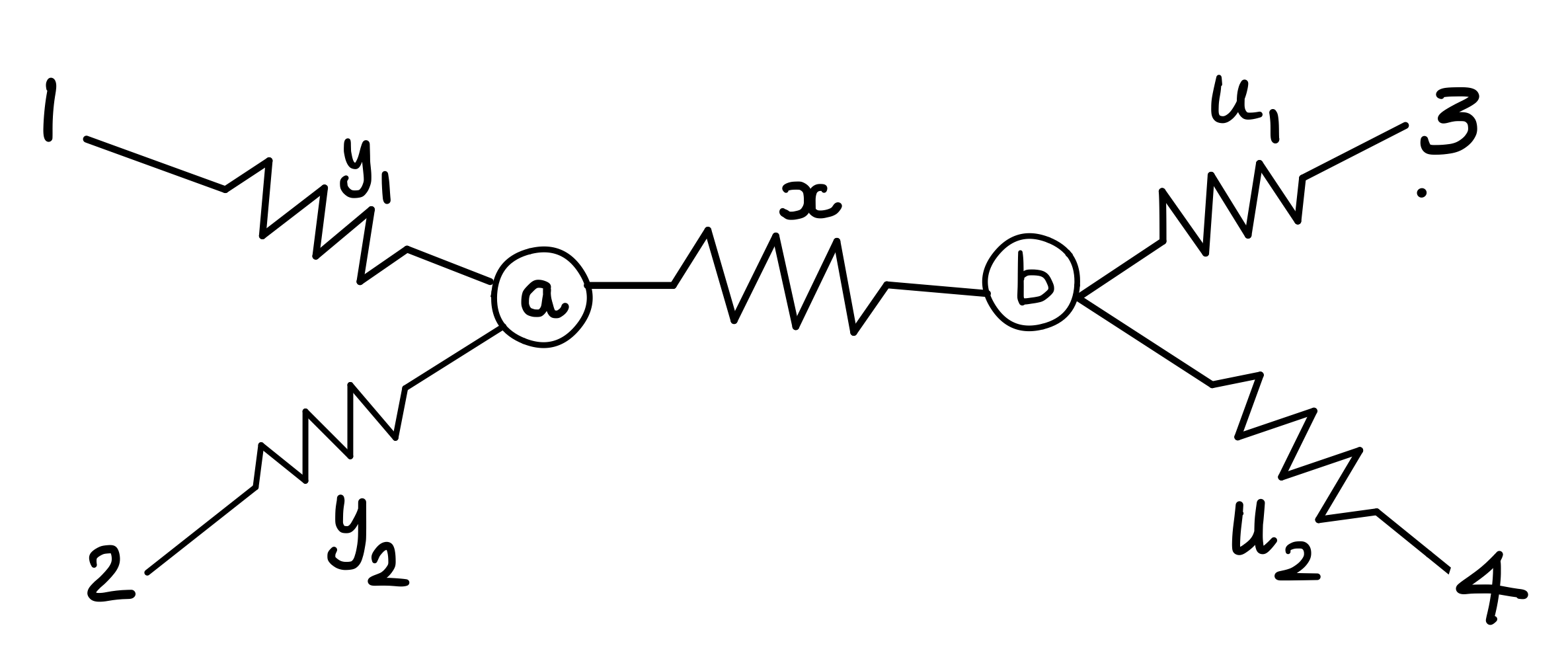}
    \caption{Node $a$ and node $b$ connected by resistor $x$}
    \label{fig:confluence example}
\end{figure} 
\FloatBarrier 

Two possible ways of reducing the circuit in Figure \ref{fig:confluence example} are shown below: 

Eliminate internal node $a$ followed by internal node $b$:
\[ \vcenteredinclude{scale=0.04}{diag/Confluence_3.png} \xRightarrow{(Y/\Delta)_3}
   \vcenteredinclude{scale=0.08}{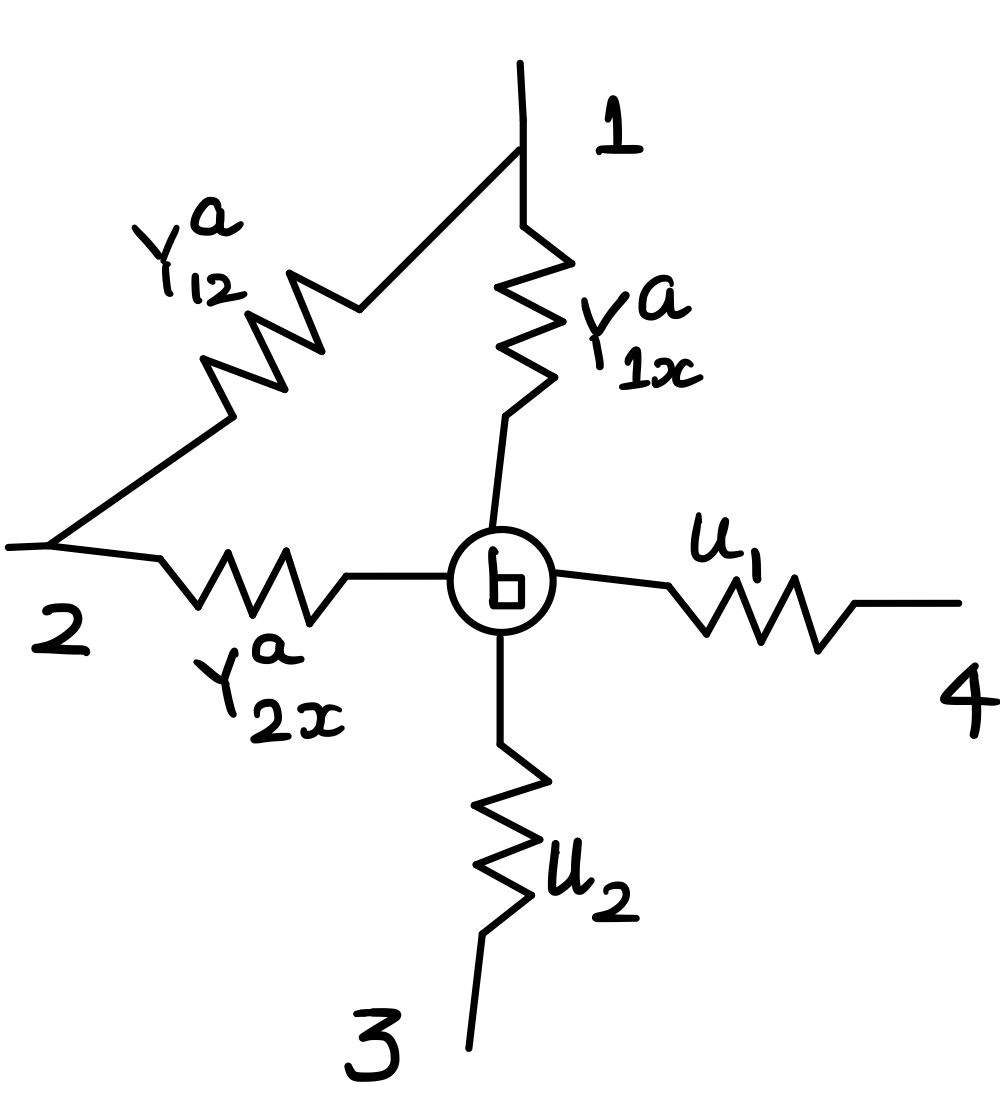} \xRightarrow{(Y/\Delta)_4}
   \vcenteredinclude{scale=0.082}{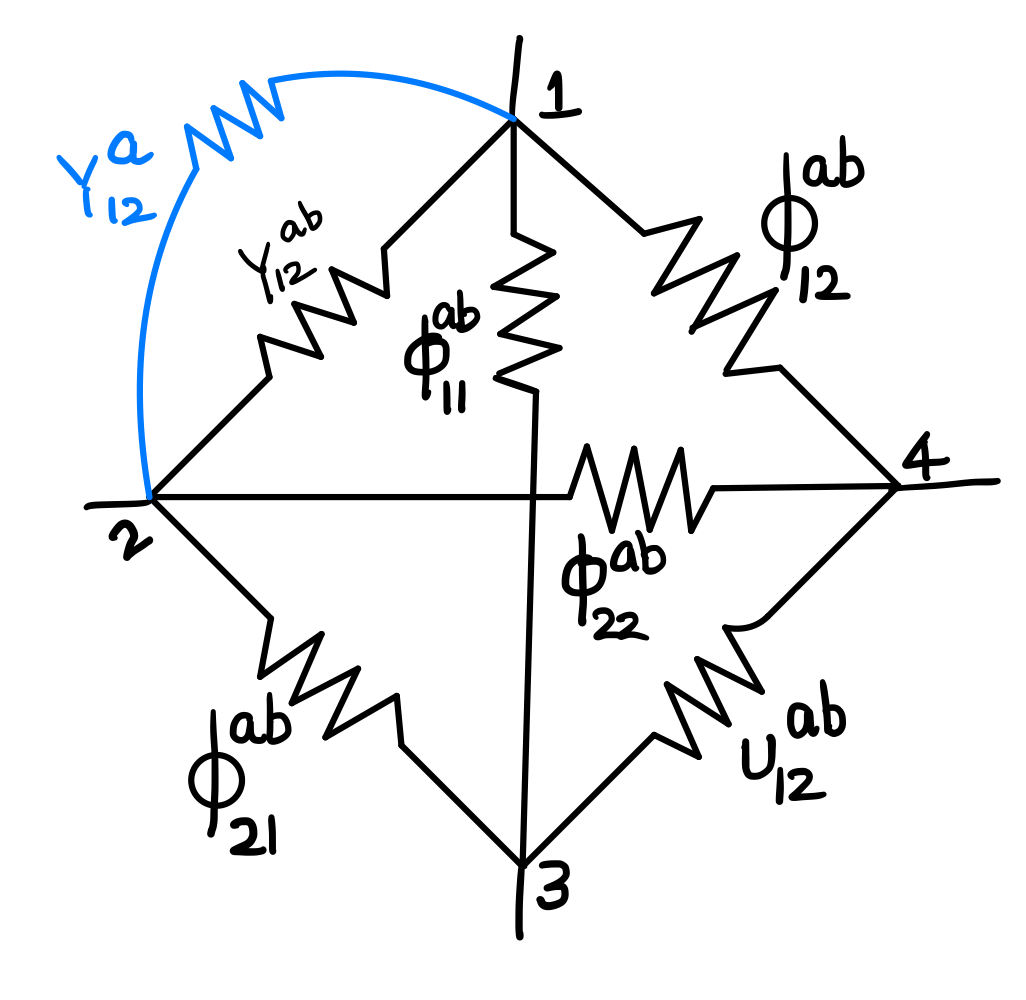} \xRightarrow{\sf[Parallel]}
   \vcenteredinclude{scale=0.082}{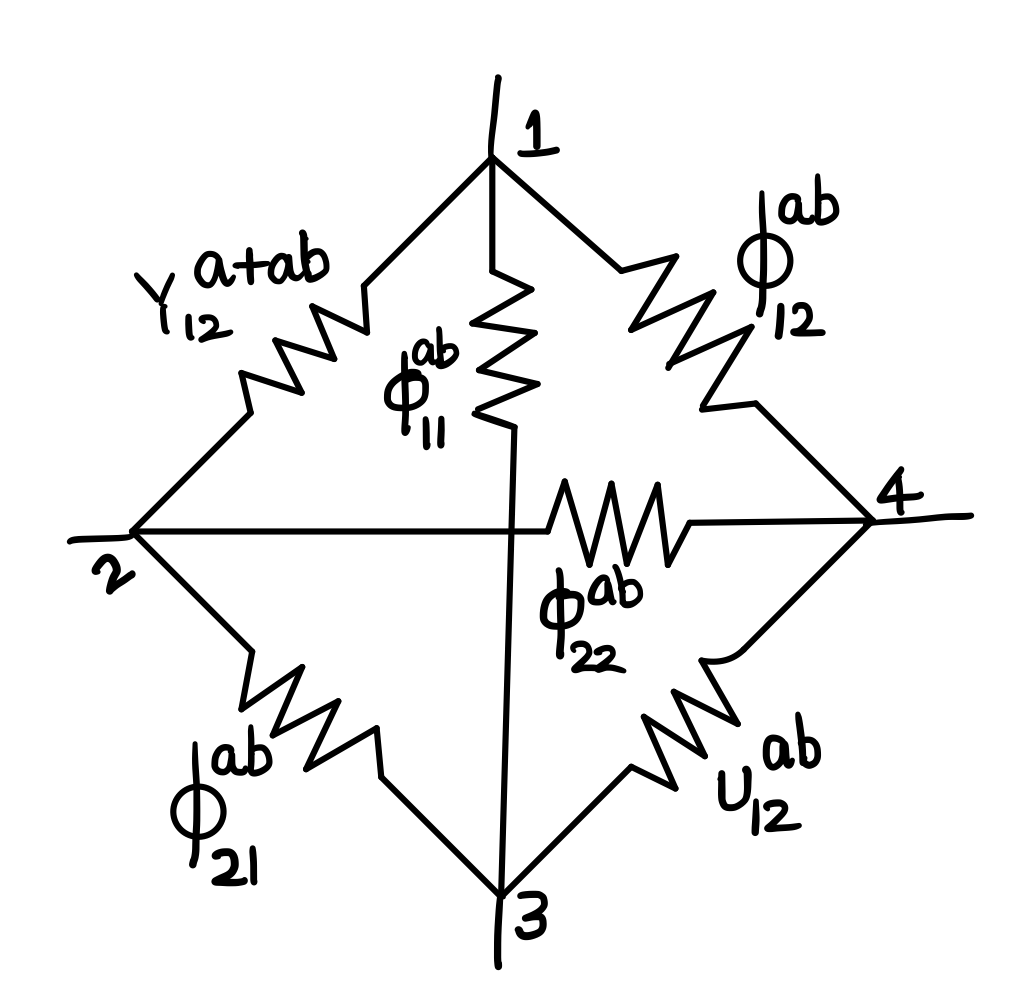} \]

Eliminate internal node $b$ followed by internal node $a$:
\[ \vcenteredinclude{scale=0.04}{diag/Confluence_3.png} \xRightarrow{(Y/\Delta)_3} 
   \vcenteredinclude{scale=0.08}{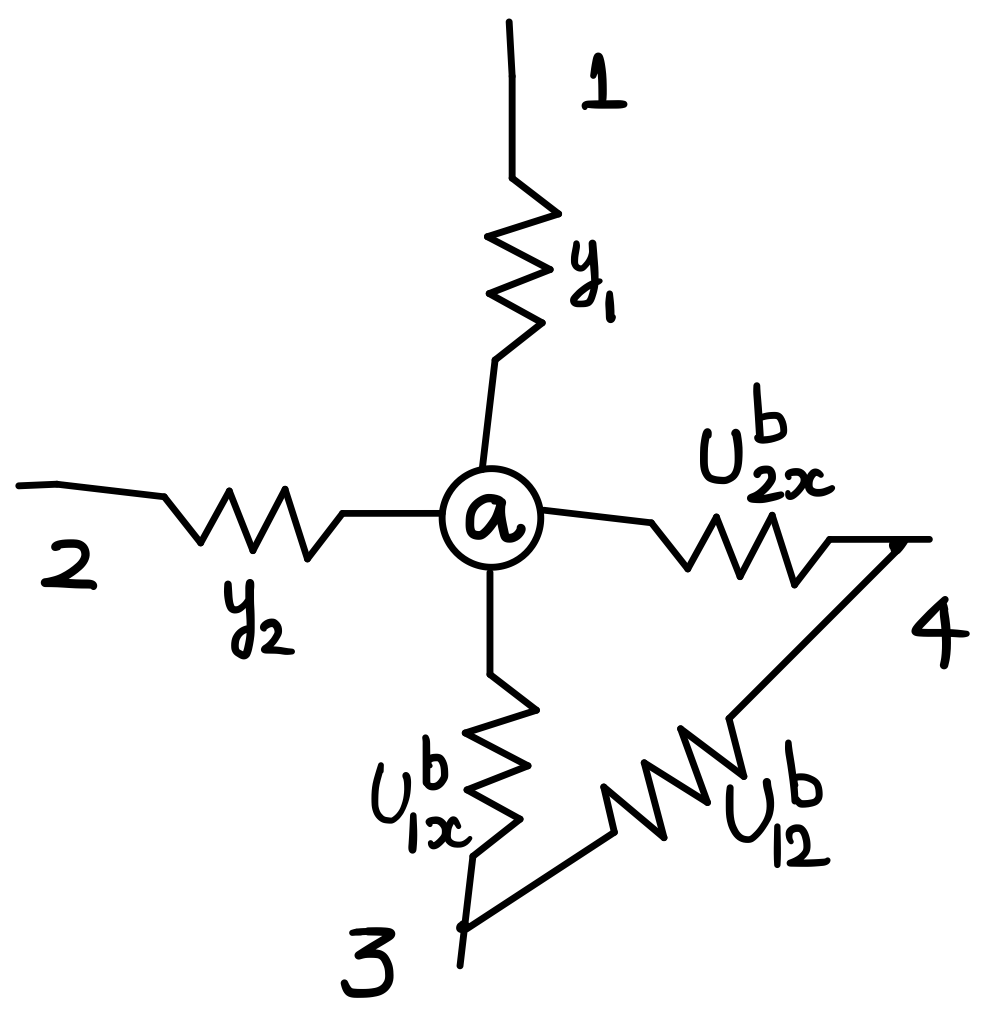} \xRightarrow{(Y/\Delta)_4}
   \vcenteredinclude{scale=0.082}{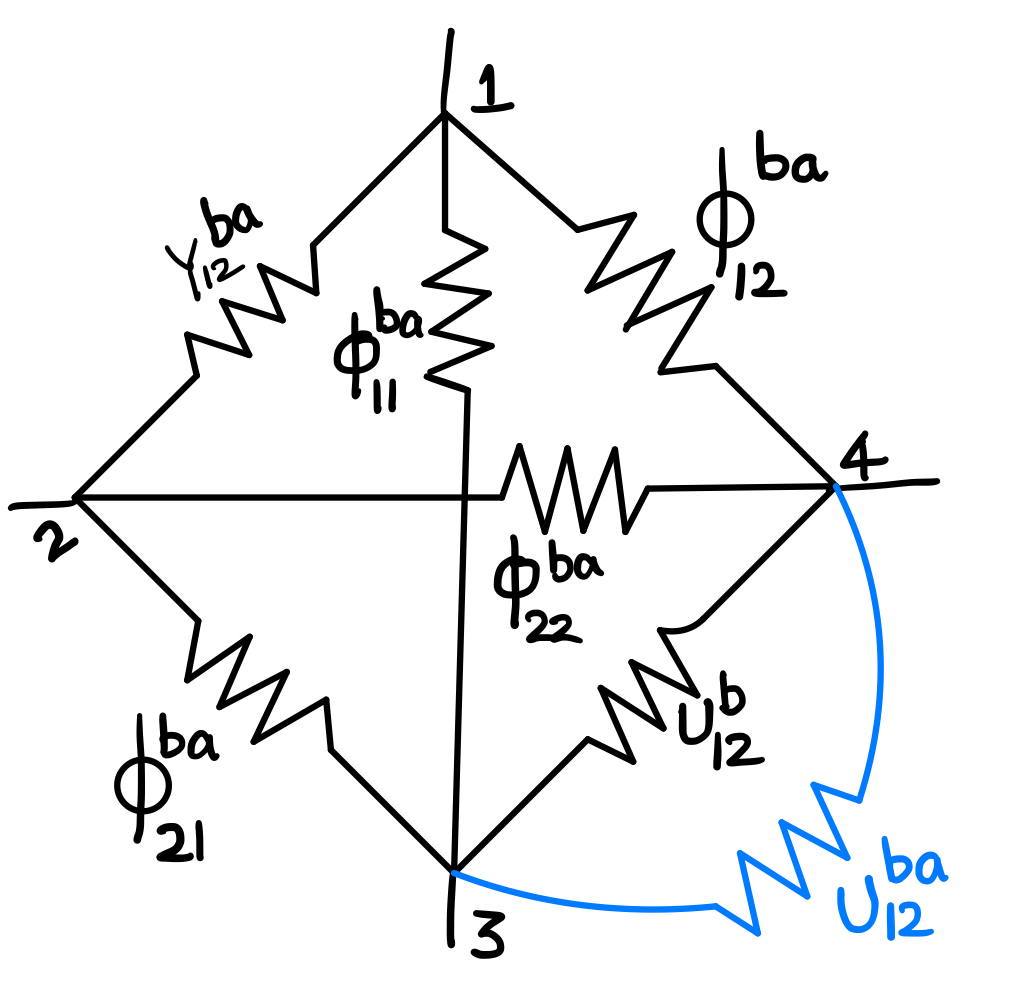} \xRightarrow{\sf[Parallel]}
   \vcenteredinclude{scale=0.082}{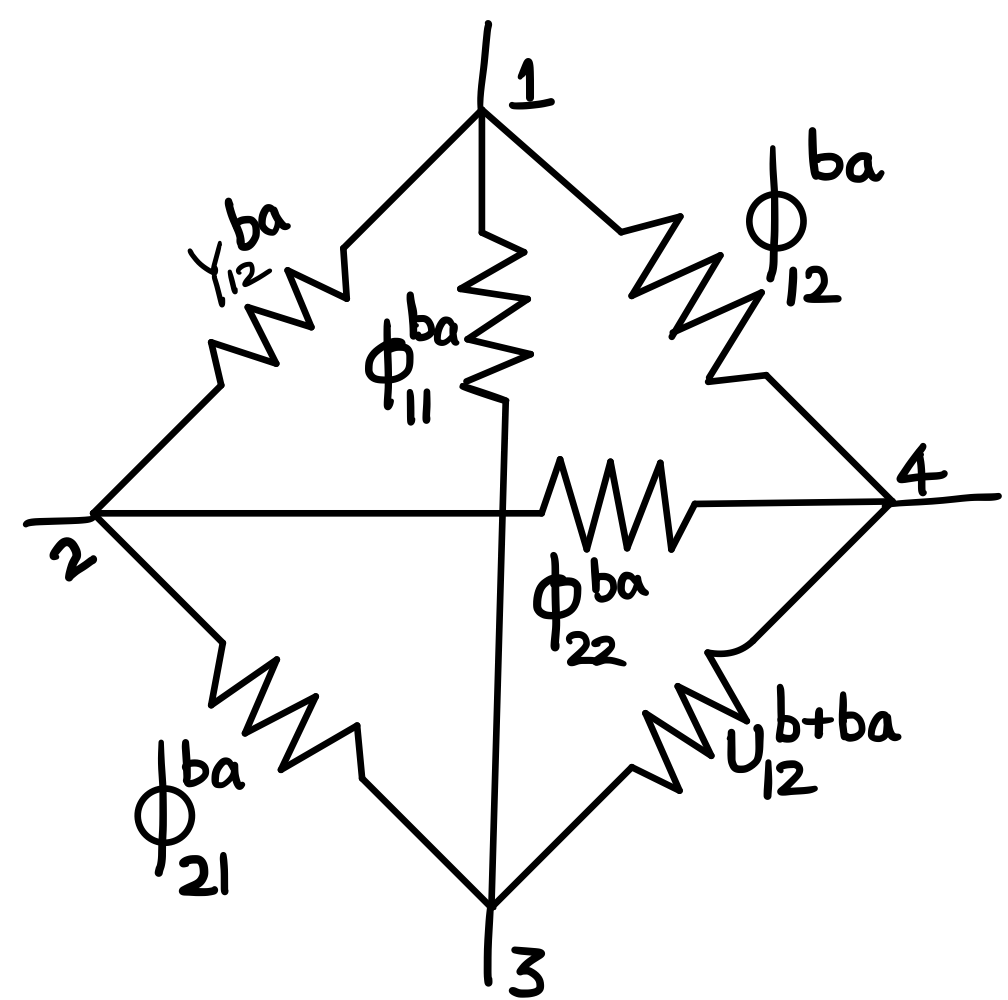} \]

The two ways of reducing the circuit are equal if:
    \begin{align*}
    \text{for } 1 \leq i,j \leq 2, ~~~  \phi^{ab}_{ij} &= \phi^{ba}_{ij}  \\
    \text{for } 1 \leq i,j \leq 2,~~~     U^{ab}_{ij} &= U^{b+ba}_{ij} \\
    \text{for } 1 \leq i,j \leq 2,~~~    Y^{a+ab}_{ij} &= Y^{ba}_{ij}  
    \end{align*}

Following this approach, the two ways of reducing the general circuit (given in the beginning of this proof) are equal if:
   \begin{align}
    \label{eqn: confluence-1}
    \text{for } 1 \leq i \leq m, ~~~ 1 \leq j \leq n ~~~   \phi^{ab}_{ij} &= \phi^{ba}_{ij}  \\
    \label{eqn: confluence-3}
    \text{for } 1 \leq i,j \leq n ~~~     U^{ab}_{ij} &= U^{b+ba}_{ij} \\
    \label{eqn: confluence-2}
    \text{for } 1 \leq i,j \leq m ~~~    Y^{a+ab}_{ij} &= Y^{ba}_{ij}  
    \end{align}
where,

\begin{minipage}[b]{0.45\textwidth} 
    \begin{align*}
    \text{for } 1 \leq i,j \leq m, ~~~ Y^{a+ab}_{ij} &= Y^{a}_{ij}+Y^{ab}_{ij}
    \end{align*}
\end{minipage} \hfill
\begin{minipage}[b]{0.45\textwidth}
    \begin{align*}
     \text{for } 1 \leq i,j \leq n, ~~~ U^{ba+a}_{ij} &= U^{b}_{ij}+U^{ba}_{ij} 
    \end{align*}
\end{minipage}
\begin{align*} 
\text{for } 1 \leq i \leq m ~\text{ and }~ 1 \leq j \leq n, ~~~~~  
\phi^{ab}_{ij} &= \frac{Y^{a}_{ix}u_{j}}{\sum_{k}^{n}u_{k}+\sum_{k}^m Y^{a}_{kx}} \\
\text{for } 1 \leq i \leq m ~\text{ and }~ 1 \leq j \leq n, ~~~~~
{\phi}_{ij}^{ba} &= \frac{y^{b}_{i} U^{b}_{jx}}{\sum_{k}^m y_{k}+\sum_{k}^{n}U^{b}_{kx}} 
\end{align*}
\begin{minipage}[b]{0.45\textwidth} 
    \begin{align*}
     \text{for } 1 \leq i,j \leq m, ~~~ Y^{ab}_{ij} 
     &= \frac{Y^{a}_{ix}Y^{a}_{jx}}{\sum_{k}^{n}u_{k}+\sum_{k}^m Y^{a}_{kx}} \\ 
     \text{for } 1 \leq i,j \leq n, ~~~ U^{ab}_{ij} 
     &= \frac{u_{i}u_{j}}{\sum_{k}^{n} u_{k}+\sum_{k}^m Y^{a}_{kx}}  \\ 
         \text{for } 1 \leq i,j \leq m, ~~~ Y^{a}_{ij} &= \frac{y_{i}y_{j}}{\sum_{k}^m y_{k}+x} \\
     \text{for } 1 \leq i \leq m, ~~~ Y^{a}_{ix} &= \frac{y_{i}x}{\sum_{k}^m y_{k}+x} 
    \end{align*}
\end{minipage} \hfill
\begin{minipage}[b]{0.45\textwidth}
    \begin{align*}
    \text{for } 1 \leq i,j \leq n, ~~~ U_{ij}^{ba} 
    &= \frac{U_{ix}^b U_{jx}^b}{\sum_{k}^m y_{k}+\sum_{k}^{n}U^{b}_{kx}} \\ 
    \text{for } 1 \leq i,j \leq m, ~~~ Y^{ba}_{ij} 
    &= \frac{y^{a}_{i} y^{a}_{j}}{\sum_{k}^m y_{k}+\sum_{k}^{n}U^{b}_{kx}} \\
    \text{for } 1 \leq i,j \leq n, ~~~ U^{b}_{ij} &= \frac{u_{i}u_{j}}{\sum_{k}^{n}u_{k}+x} \\
    \text{for } 1 \leq i \leq n, ~~~  U^{b}_{ix} &= \frac{u_{i}x}{\sum_{k}^{n}u_{k}+x} 
  \end{align*}
\end{minipage} 
\FloatBarrier

\vspace{1em}

Now, proving \ref{eqn: confluence-1}, $\phi_{ij}^{ab} = \phi_{ij}^{ba}$:
\begin{align*}
\phi^{ab}_{ij} 
&= \frac{ \frac{y_{i}x}{\sum_{k}^m y_{k}+x} u_{j}}{\sum_{k}^{n}u_{k}+\sum_{k}^m \frac{y_{k}x}{y_{k}+x}} 
=  \frac{ y_{i} x u_{j}}{(\sum_{k}^{n}u_{k})(\sum_{k}^m y_{k}+x) +\sum_{k}^m y_{k}x} \\
&=  \frac{ y_{i} x u_{j}}{(\sum_{k}^m y_{k})(\sum_{k}^{n}u_{k}) + \sum_{k}^{n}u_{k}x 
+ \sum_{k}^m y_{k}x } 
=  \frac{ y_{i} x u_{j}}{ \sum_{k}^{n} u_{k} x
+ \sum_{k}^m y_{k}( \sum_{k}^{n}u_{k} + x) } \\
&= \frac{ \frac{y_{i} x u_{j}}{\sum_{k}^{n}u_{k} + x} }{\frac{\sum_{k}^{n} u_{k} x}{\sum_{k}^{n}u_{k} + x} 
+ \sum_{k}^m y_{k} } 
= \frac{ y_{i} U_{jx}^b }{\sum_{k}^{n} U_{kx}^b
+ \sum_{k}^m y_{k} } = \phi_{ij}^{ba}
\end{align*}

Now, proving \ref{eqn: confluence-3}, $U_{ij}^{ab} = U_{ij}^{b+ba}$:

\begin{align}
\label{eqn: LHS}
U^{ab}_{ij} &= \frac{u_{i}{u_j}}{\sum_{k}^{n} u_k + \sum_k^m Y_{kx}^a} = 
\frac{u_{i}u_j}{\sum_{k}^{n} u_k + \frac{\sum_k^m Y_{k}x}{\sum_k^m y_k +x}} = 
\frac{u_{i}u_{j}}{\sum_{k}^{n} u_k ({\sum_k^m y_k +x}) + \sum_k^m Y_{k}x}  \notag \\ 
& \stackrel{(*)}{=} \frac{u_{i}u_{j}}{\sum_k^m y_k ({\sum_{k}^{n} u_k +x}) + \sum_{k}^{n} u_{k}x} 
= \frac{\frac{u_i u_j}{\sum_{k}^{n} u_k + x}(\sum_k^m y_k+x)}
{\sum_k^m y_k  + \frac{\sum_{k}^{n} u_k x}{ \sum_{k}^{n} u_k +x}} 
\end{align}

For step $(*)$, see computation of denominator in the proof of \ref{eqn: confluence-1}, $\phi_{ij}^{ab} = \phi_{ij}^{ba}$.

\begin{align}
\label{eqn: RHS}
    U^{b + ba}_{ij} =  U_{ij}^b + U_{ij}^{ba} 
    &= \frac{u_i u_j}{\sum_{k}^{n} u_k + x} + \frac{\frac{u_i u_j x^2}{(\sum_k^n u_k + x)^2}}{\sum_k^m y_k + \frac{\sum_{k}^{n} u_k x}{\sum_{k}^{n} u_k + x}} 
    = \frac{u_i u_j\left( \sum_k^m y_k + \frac{\sum_{k}^{n} u_k x}{\sum_{k}^{n} u_k + x}  \right) + \left( \frac{u_i u_j x^2}{\sum_k^n u_k + x}  \right)}{\left(\sum_{k}^{n} u_k + x \right)\left( \sum_k^m y_k + \frac{\sum_{k}^{n} u_k x}{ \sum_k^m y_k + x} \right)} \notag \\ 
    &= \frac{u_i u_j\left( \sum_k^m y_k + \frac{\sum_{k}^{n} u_k x}{\sum_{k}^{n} u_k + x}   
    +  \frac{x^2}{\sum_{k}^{n} u_k + x}  \right)}
    {\left(\sum_{k}^{n} u_k + x \right)
    \left( \sum_k^m y_k + \frac{\sum_{k}^{n} u_k x}{ \sum_{k}^{n} u_k + x} \right)} 
   = \frac{\frac{u_i u_j}{\sum_{k}^{n} u_k + x } 
    \left( \sum_k^m y_k + \frac{\sum_{k}^{n} u_k x}
    {\sum_{k}^{n} u_k + x} +  \frac{x^2}{\sum_{k}^{n} u_k + x} \right) }
    {\left( \sum_k^m y_k + \frac{\sum_{k}^{n} u_k x}{ \sum_{k}^{n} u_k + x} \right)} 
\end{align}

The denominators of equations \ref{eqn: LHS} and \ref{eqn: RHS} are the same. Hence, multiplying the numerators of these equations by $\frac{\sum_{k}^{n} u_k + x }{u_i u_j}$ , it suffices to prove that: 
\begin{align} 
\sum_k^m y_k + \frac{\sum_{k}^{n} u_k x}{\sum_{k}^{n} u_k + x} +  \frac{x^2}{\sum_{k}^{n} u_k + x} = \sum_k^m y_k + x \left( 
\frac{\sum_{k}^{n} u_k + x}{\sum_{k}^{n} u_k + x} \right) = \sum_k^m y_k + x
\end{align}

The proof for equation \ref{eqn: confluence-2} is analogous. 

\end{proof}

The following is the main result of this paper:

\begin{theorem}
${\sf Resist}_{R}$ has a confluent terminating rewriting system on maps.
\end{theorem}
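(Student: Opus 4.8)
The plan is to reduce the theorem to Newman's Lemma: a terminating rewriting system is confluent precisely when it is locally confluent, and local confluence is in turn established by checking that all critical pairs — the minimal overlaps of the left-hand sides of the rules, taken modulo the Fr\"obenius/spider congruence — are joinable. Thus the proof splits into a termination argument and a finite case-analysis of critical pairs, the latter of which is largely supplied by the three preceding lemmas.

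First I would establish termination by exhibiting a well-founded measure that strictly decreases under every rewrite. The decisive observation is that each star-mesh rewrite $(Y/\Delta)_n$ deletes exactly one internal node while no rule ever creates one, so the number of internal nodes is a monovariant for the $(Y/\Delta)$ rules. However, $(Y/\Delta)_n$ replaces a star on $n$ arms ($n$ edges) by a mesh ($n(n-1)/2$ edges), so the edge count \emph{grows} and cannot be primary. I would instead order circuits lexicographically, with the number of internal nodes dominant, followed by the number of parallel edge-bundles (decreased by {\sf[Parallel]} and {\sf[Short~circuit]}) and the number of junctions (decreased by {\sf[Spider]} amalgamation and {\sf[Self-adjoint]}). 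The subtle case is {\sf[Expansion]}, which \emph{increases} the junction count and could a priori fire forever; here the stipulation that expansion is performed only \emph{irreducibly} — never where an immediate rewrite could undo it — must be used to show that no two expansions chain, so that expansion cannot generate an infinite descent.

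For local confluence I would enumerate critical pairs by overlapping the left-hand sides of the rules. The overlaps lying entirely within the spider fragment are joinable by the known confluence of spider rewriting on hypergraph categories. The genuinely electrical overlaps are exactly those in which a star-mesh rewrite competes with another rule, and these are precisely the cases already discharged: Lemma~\ref{Lemma: unit-star rewrite} resolves the overlap of $(Y/\Delta)$ with a unit on an arm, Lemma~\ref{Lemma: star-parallel rewrite} resolves its overlap with {\sf[Parallel]} (this is exactly where the irreducible {\sf[Expansion]}, followed by {\sf[Short~circuit]}, furnishes the matching reduction), and Lemma~\ref{Lemma: star-star rewrite} resolves the overlap of two adjacent star-mesh rewrites sharing the joining edge. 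I would moreover note that the applicability conditions on $(Y/\Delta)_n$ — every arm carries a conductance that is neither $0$ nor infinite — preclude some overlaps outright, in particular any clash with {\sf[Short~circuit]} on an arm. The only residual overlaps, those of {\sf[Self-adjoint]} and {\sf[Short~circuit]} with the spider rules and with each other, are routine. Newman's Lemma then delivers confluence, and a confluent terminating system has unique normal forms, proving the theorem and yielding a decision procedure for equality of maps in ${\sf Resist}_R$.

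The step I expect to be the main obstacle is termination. Confluence is essentially settled by the three lemmas once the critical-pair bookkeeping is in place, whereas the $O(n^2)$ edge blow-up under $(Y/\Delta)_n$ defeats every naive single-quantity measure, and the repeatable {\sf[Expansion]} rule means the well-founded order must be engineered so that the irreducibility discipline provably bounds expansion; assembling one lexicographic order under which every rule strictly descends is the crux.
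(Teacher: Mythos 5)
Your proposal is correct and takes essentially the same approach as the paper: termination via a lexicographic well-founded measure in which the node count dominates (so that every star-mesh rewrite strictly decreases it, absorbing the quadratic edge blow-up), followed by Newman's Lemma reducing confluence to local confluence, with the critical pairs discharged by Lemmas \ref{Lemma: unit-star rewrite}, \ref{Lemma: star-parallel rewrite}, and \ref{Lemma: star-star rewrite}. The paper's measure is simply the pair (number of nodes after expansion, number of parallel edges), and its enumeration additionally lists the overlapping-$[\mathsf{Parallel}]$ and $(Y/\Delta)_1$-versus-$(Y/\Delta)_2$ overlaps explicitly (both resolving routinely, by associativity of addition and a direct computation respectively), but these are bookkeeping differences rather than a different argument.
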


\begin{proof}

First observe that the reduction using rewriting rules of ${\sf Resist}_R$ must terminate. This may be observed by keeping track of the number of nodes $N$ (after expansion) and the number of parallel arrows $P$ and using lexicographical ordering on the pairs $(N,P)$.  The $[{\sf Spider}]$ and $[{\sf Parallel}]$ rules reduce both $N$ and $P$. The star-mesh family of identities reduce the number of nodes $N$ as an internal node is always removed -- note that the star-mesh rule, on the other hand, can increase the number of parallel connections. As the lexicographical ordering on $\mathbb{N} \times \mathbb{N}$ is a well-ordering, this shows that the reduction process must eventually terminate.

Since the rewriting terminates, local confluence, that is {\em resolutions} of diverging single step rewrites, implies global confluence. Hence, it suffices to prove the local confluence property for the distinct pairs of terms produced by overlapping divergent one step rewrites: these are often called critical pairs. This amounts to proving that the order of reducing overlapping rewrites does not matter. 
The critical pairs (excluding the spider rewrites) that occur in the this rewriting system are drawn below:

\begin{enumerate}[(a)]
    \item {\bf Overlapping [{\sf Parallel}] rewrites:}
    
    \begin{minipage}[h]{0.3\textwidth}
            \[ \includegraphics[scale=0.07]{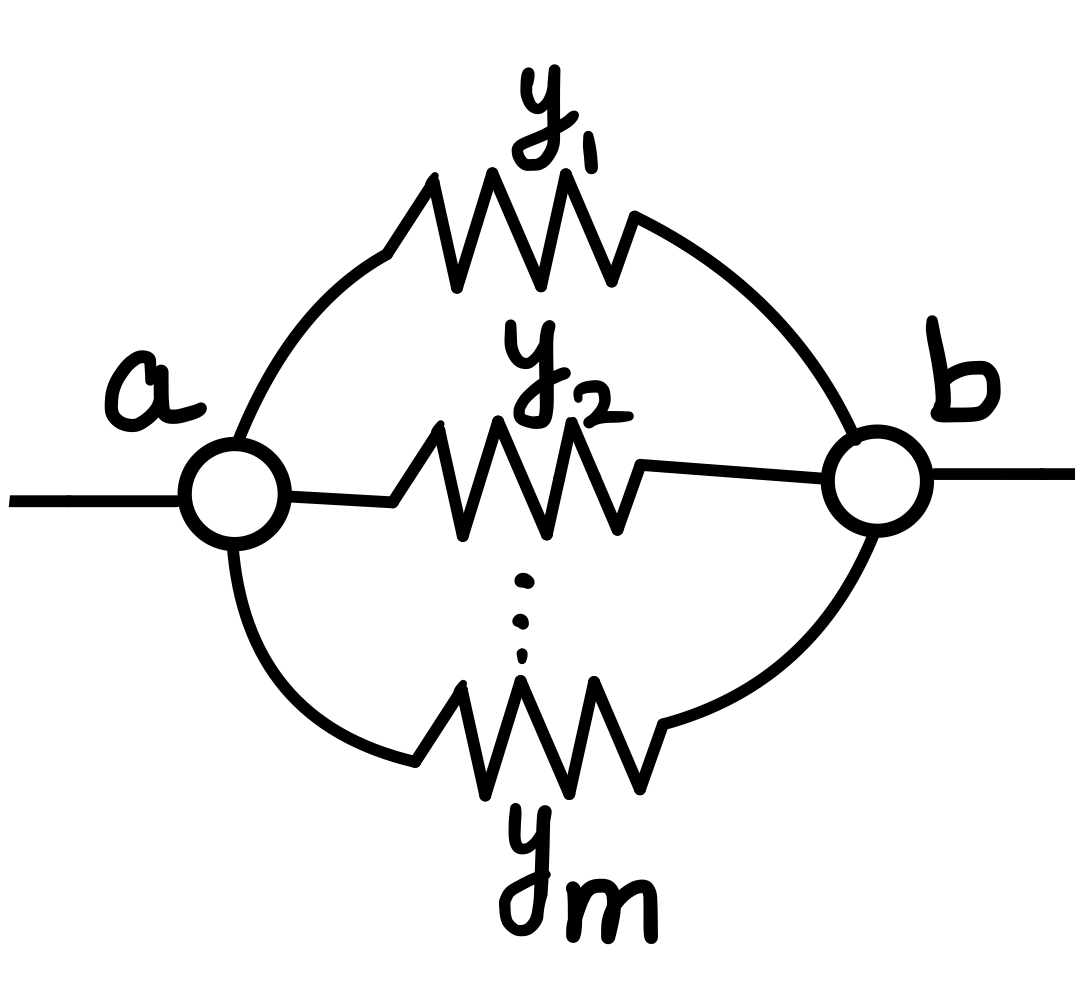} \]
    \end{minipage}
    \begin{minipage}[h]{0.6\textwidth}
    Rewriting the network on the left requires $m-1$ overlapping applications of the [{\sf Parallel}] rule. Since, combining two parallel resistors involves adding their conductances, and addition is associative, all the orders of application of the [{\sf Parallel}] rule yield the same final circuit. Hence, local confluence holds in this case. 

    \end{minipage}
    \item{\bf Overlapping $(Y/\Delta)_1$ and $(Y/\Delta)_2$ rewrite:}
    
    \begin{minipage}[h]{0.3\textwidth}
            \[ \includegraphics[scale=0.035]{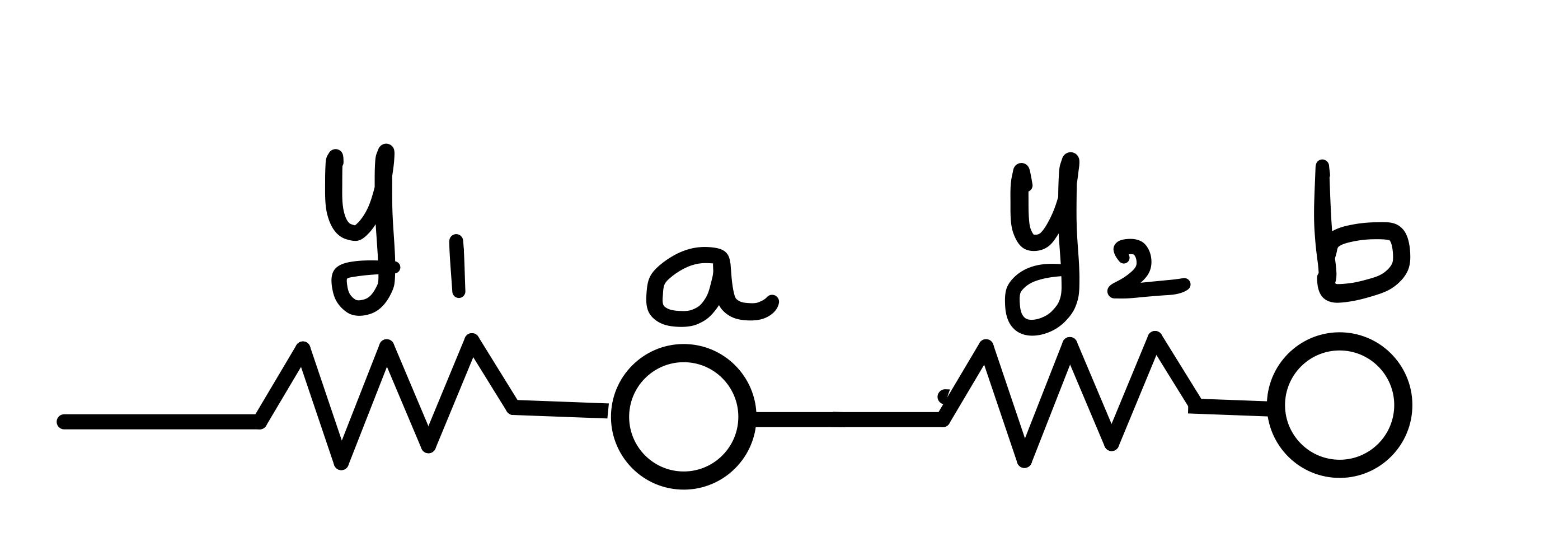} \]
    \end{minipage}
    \begin{minipage}[h]{0.6\textwidth}
    To rewrite the network on the left, one may apply $(Y/\Delta)_1$ to eliminate node $b$ first, or apply $(Y/\Delta)_2$ to eliminate node $a$ first. This results in a critical pair $((Y/\Delta)_1, (Y/\Delta)_2)$ in the rewriting process of such networks. However, the critical pair is locally confluent, see below: 
    \end{minipage}
    \[ \includegraphics[scale=0.03]{diag/CP1.png} 
   ~~\xRightarrow{(Y\Delta)_2} ~~\includegraphics[scale=0.03]{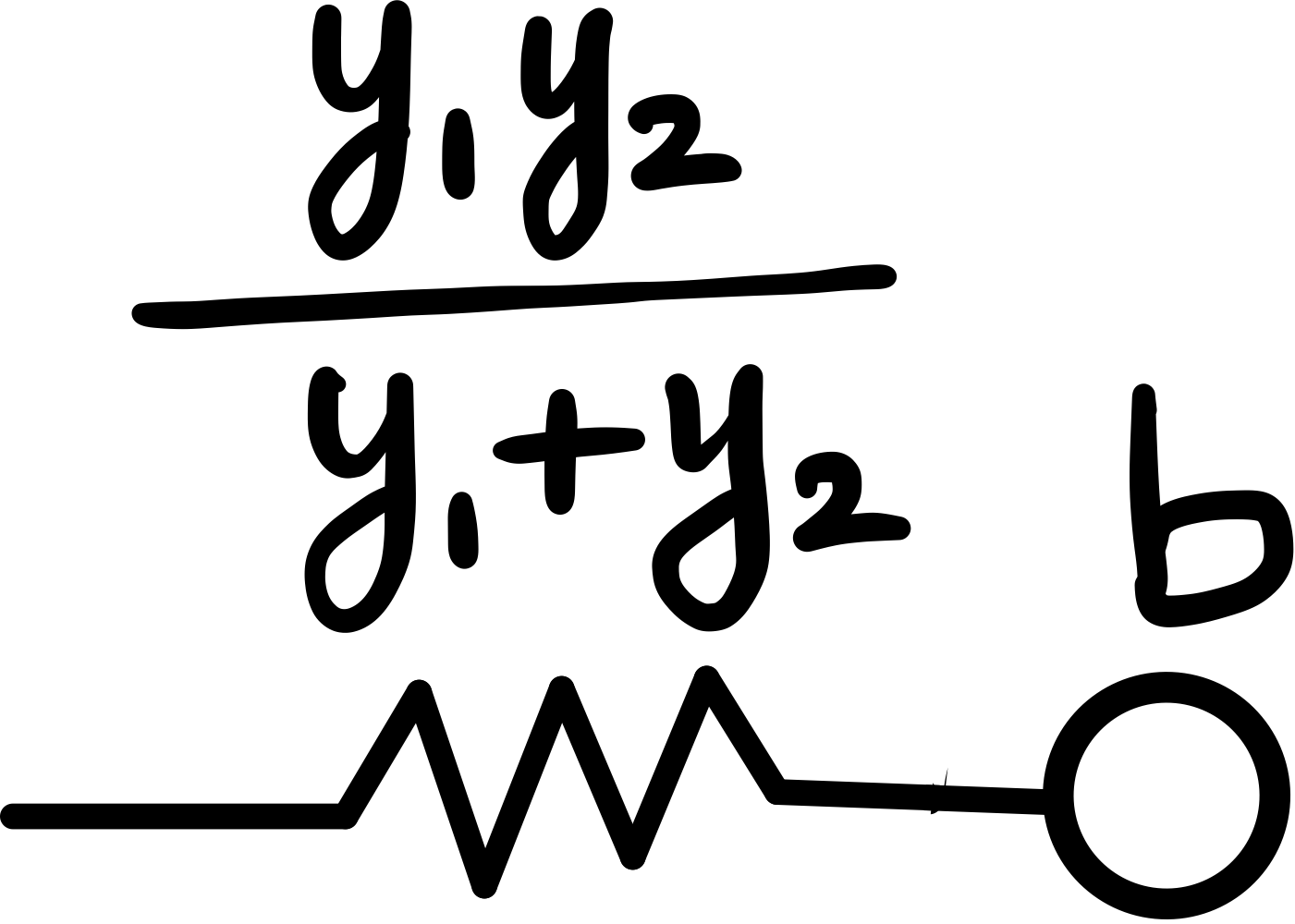} 
   ~~\xRightarrow{(Y\Delta)_1} ~~\includegraphics[scale=0.015]{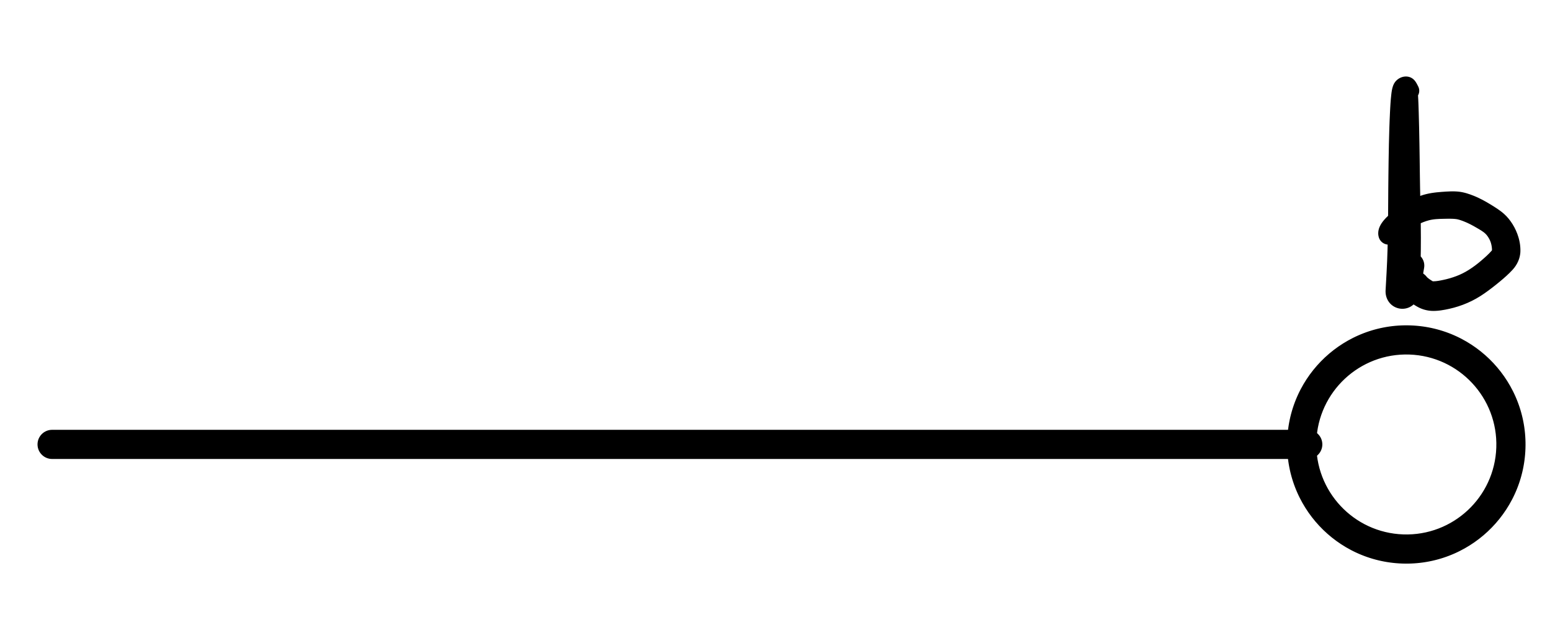} \]
   \[ \includegraphics[scale=0.03]{diag/CP1.png} 
      ~~ \xRightarrow{(Y\Delta)_1} ~~ \includegraphics[scale=0.026]{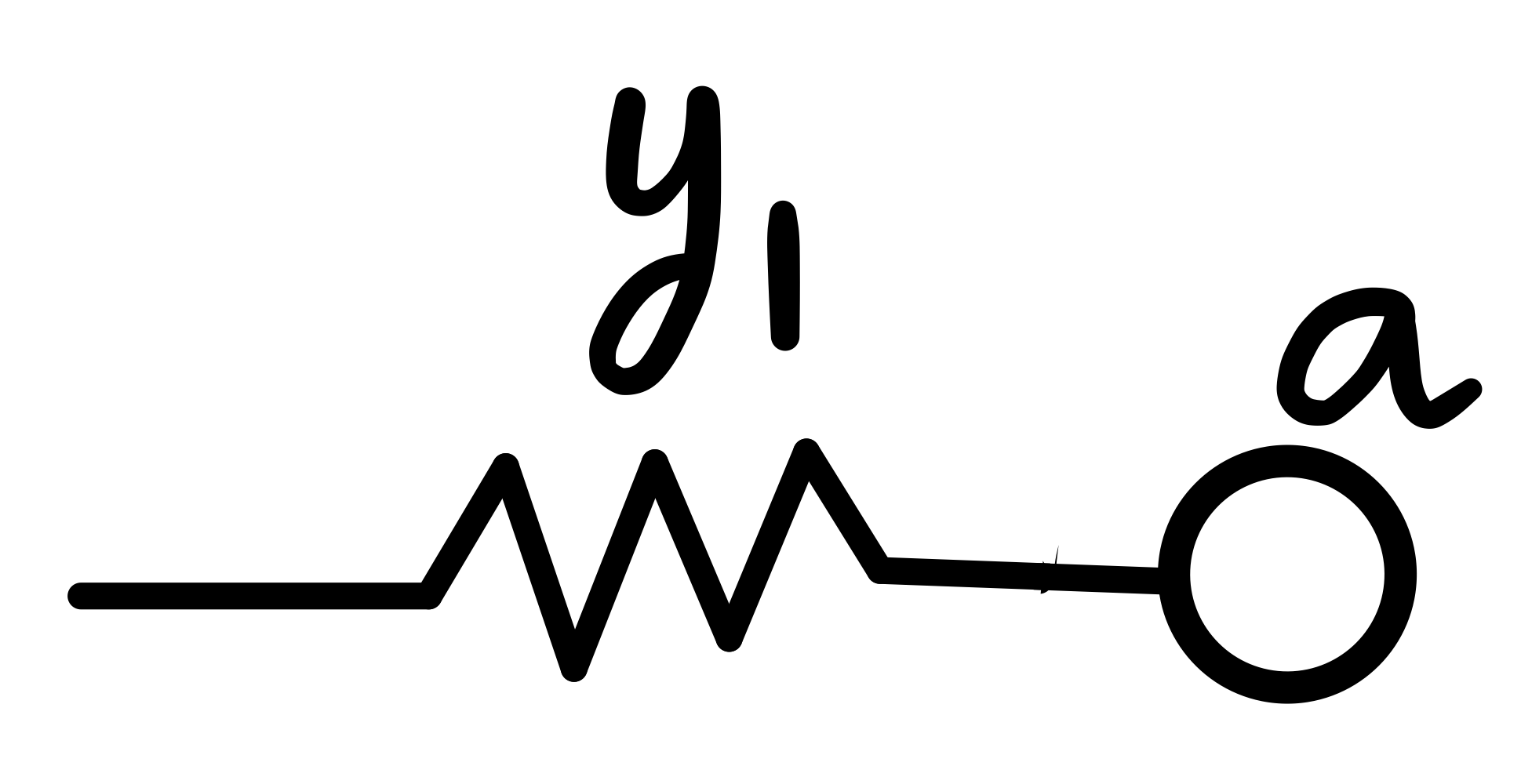} 
      ~~ \xRightarrow{(Y\Delta)_1} ~~ \includegraphics[scale=0.015]{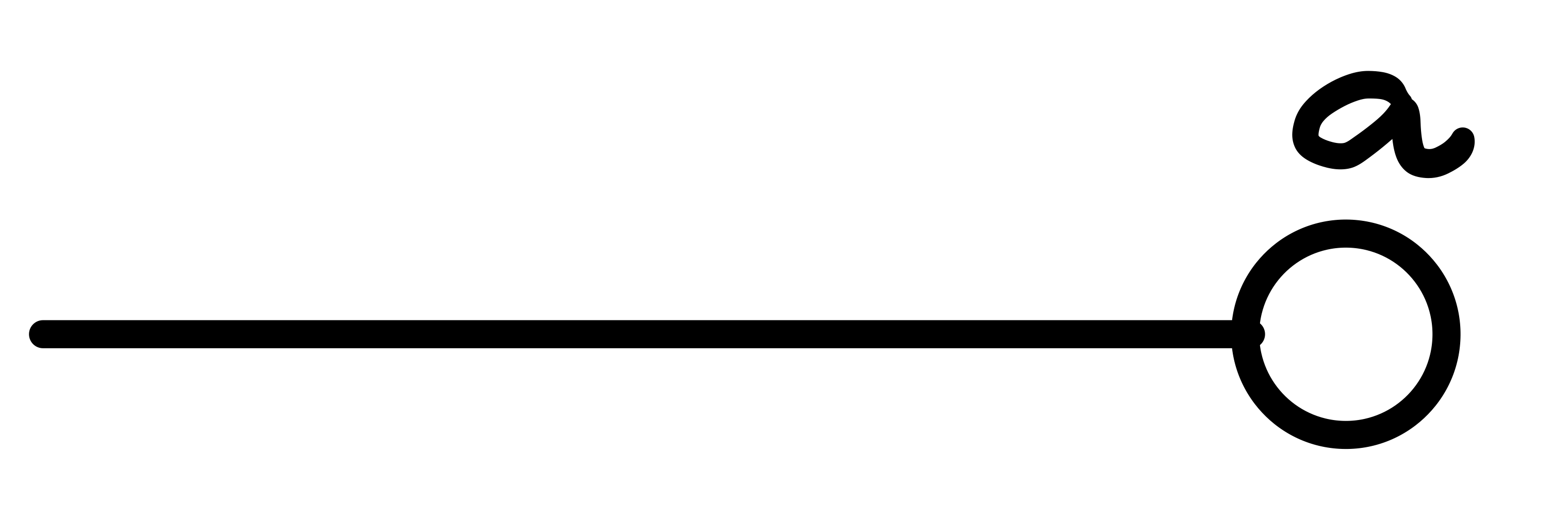} \]
        
    \item {\bf Overlapping $(Y/\Delta)_1$ and star-mesh rewrite:}

    \begin{minipage}[h]{0.3\textwidth}
            \[ \includegraphics[scale=0.05]{diag/Unit1.png} \]
    \end{minipage}
    \begin{minipage}[h]{0.6\textwidth}
             To rewrite the network on the left, one may apply $(Y/\Delta)_1$ to eliminate node $b$ first (and then apply $(Y/\Delta)_m$ to eliminate $a$), or apply $(Y/\Delta)_{m+1}$ to eliminate node $a$ first (and then apply $(Y/\Delta)_m$ to eliminate $b$). This results in a critical pair $((Y/\Delta)_1,(Y/\Delta)_m) $ in the rewriting of such networks. However, by Lemma \ref{Lemma: unit-star rewrite}, local confluence holds for this critical pair. 
    \end{minipage}

    \item {\bf Overlapping $\sf [Parallel]$ and star-mesh rewrites:}
    
    \begin{minipage}[h]{0.3\textwidth}
            \[ \includegraphics[scale=0.05]{diag/N9.png} \]
    \end{minipage}
    \begin{minipage}[h]{0.6\textwidth}
    To rewrite the network on the left, ${\sf [Parallel]}$ to combine resistors $y_1$ and $y_2$ first (followed by $(Y/\Delta)_{n-1}$ to eliminate node $x$), or apply $(Y/\Delta)_{n}$ may be applied to eliminate node $x$ first. This results in a critical pair $({\sf [Parallel]}, (Y/\Delta)_{n})$ in the rewriting process of such networks. However, by Lemma \ref{Lemma: star-parallel rewrite}, local confluence holds for this critical pair. 
    \end{minipage}    
    
    \item {\bf Overlapping two star-mesh rewrites:}
    
    \begin{minipage}[h]{0.3\textwidth}
            \[ \includegraphics[scale=0.045]{diag/Confluence_2.png} \]
    \end{minipage}
    \begin{minipage}[h]{0.6\textwidth}
    To rewrite the network on the left, $(Y/\Delta)_{n+1}$ may be applied first to eliminate node $b$ first (followed by $(Y/\Delta)_{m+n}$ to eliminate node $a$), or $(Y/\Delta)_{m+1}$ may be applied to eliminate node $a$ first (followed by $(Y/\Delta)_{m+n}$ to eliminate node $b$). This results in a critical pair $((Y/\Delta)_{n+1}, (Y/\Delta)_{m+1})$ in the rewriting process of such networks. However, by Lemma \ref{Lemma: star-star rewrite}, local confluence holds for this critical pair. 
    \end{minipage}
\end{enumerate}

\end{proof}

An immediate consequence is:
\begin{corollary}
Modulo the decidability of the positive division rig $R$, $
{\sf Resist}_{R} $ has a decidable equality by reduction to normal form.
\end{corollary}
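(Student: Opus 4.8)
The plan is to read a decision procedure for equality of maps directly off the normal forms supplied by the Theorem just established. Since the rewriting system of ${\sf Resist}_R$ is both terminating and confluent, every map reduces to a normal form that is unique, by the standard fact that a terminating and confluent system has unique normal forms. Each rewrite rule---the $[{\sf Spider}]$ rules together with the self-adjoint, short-circuit and parallel identities and the star-mesh family $(Y/\Delta)_n$---is an identity that holds in ${\sf Resist}_R$, so a reduction step never changes the map that a circuit denotes; conversely, equality in ${\sf Resist}_R$ is generated precisely by these identities. Hence two maps are equal in ${\sf Resist}_R$ if and only if they are inter-convertible, and by termination and confluence this holds if and only if they reduce to the same normal form.

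First I would make the reduction itself effective. Starting from a map $f$, repeated application of the rewrite rules terminates by the $(N,P)$ lexicographic argument of the Theorem, and each individual step is computable provided the rig operations of $R$ are: the only data altered are the conductance labels, which are recomputed by addition, multiplication and division of elements of $R$. The divisions are all divisions by sums of the form $\sum_k y_k$, and these are well-defined here because positivity of $R$, together with the standing requirement that every arm of a star carry a non-zero conductance, guarantees the denominators are non-zero and hence invertible in the division rig. Thus, assuming $+$, $\cdot$ and $(-)^{-1}$ are computable in $R$, I obtain an algorithm that outputs the normal form $\hat f$ of any input $f$.

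Next I would compare normal forms. A normal form is a family of meshes with extra inputs, so it is pinned down by a finite amount of combinatorial data---the boundary interface and the complete graph on the boundary nodes---together with a single conductance label in $R$ on each edge. Given $f$ and $g$ with normal forms $\hat f$ and $\hat g$, deciding $\hat f = \hat g$ therefore reduces to checking that the combinatorial skeletons coincide, a finite purely syntactic test, and then verifying the finitely many equalities of corresponding conductance values in $R$. This last check is exactly where the hypothesis that $R$ has decidable equality is consumed. Combining the three steps gives the decision procedure: reduce $f$ and $g$ to normal form and test the normal forms for equality.

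The main obstacle I anticipate is not the algorithm but the bookkeeping needed to make ``same normal form'' a genuinely well-defined, decidable syntactic predicate. One must verify that normal forms are canonical rather than path-dependent: the spider structure makes the order of a junction's legs irrelevant, so the combinatorial comparison has to be taken up to that symmetry, and one must confirm that the extra-input structure of the terminal mesh is recorded in a canonical way independent of the sequence of rewrites chosen. Once the normal form is fixed as canonical combinatorial data decorated by $R$-labels, all remaining content is the effectivity of $R$, which is precisely what the phrase \emph{modulo the decidability of the positive division rig $R$} isolates.
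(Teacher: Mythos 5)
Your proposal is correct and matches the paper's intent: the paper states this corollary as an immediate consequence of the theorem with no written proof, and the argument it implicitly relies on is exactly yours --- termination plus confluence give unique normal forms, each rewrite step is effective because positivity keeps all denominators (sums of non-zero conductances) invertible, and the final comparison of mesh normal forms consumes the decidability of equality in $R$. Your closing caveat about comparing normal forms only up to spider-leg symmetry is a genuine point the paper glosses over, but it does not change the conclusion.
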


\section{Discussion}
In this paper we have provided a normal form for resistor networks over a positive division rig and, thereby, a decision procedure for equality of resistor circuits (given decidability of equality for the rig).  Even though modest, as far as we know, ours is the first such result in the literature.

Of course, our motivation came from the difficulty of working with the existing `normal forms' for stabilizer circuits 
\cite{backens2016completeness}. As pointed out by Kissinger in \cite{kissinger2022phase}, these normal forms for stabilizer circuits are hard to work with and {\em ``almost a decade after completeness was proven for the stabiliser fragment of the ZX calculus, new ideas are still needed"}.  Based on the recently established connection between quantum and electrical circuits \cite{comfort2021graphical, cockett2022categories}, our thought was that, studying simpler cases such as resistor circuits might provide new insights into normal forms for stabilizer circuits.

An interesting and a more challenging question is whether the results in this paper can be generalized to arbitrary division rigs so as to cover the `resistor' case arising from qudit stabilizer quantum mechanics. As has been mentioned, to apply these ideas to stabilizer circuits a necessary step is to generalize these results to division rigs and so, in particular, to finite fields.  The technical difficulty of applying these ideas verbatim is the question of how one handles zeros and divisions by zero. 

An example of this difficulty over a finite field, arises when resolving the critical pair $(Y/\Delta)_1$ with 
$(Y/\Delta)_n$ (for $n \geq 3$)  (essentially Lemma \ref{Lemma: unit-star rewrite} above).  The rewriting of the $n$-star 
to an $n$-mesh can involve a division by zero (when $\sum_{i = 1}^n y_i = 0$): it is tempting to think that this should be 
interpreted as giving ``infinite conductances'' in the mesh.  However, removing a point of the star using $(Y/\Delta)_1$ 
will also remove the division by zero in the subsequent $(Y/\Delta)_{n-1}$ rewriting showing such an interpretation is not 
valid.  

The point is that for finite fields and division rings the star/mesh transformations are only valid when the sum of the 
conductances of the star is non-zero.   
This, of course complicates the rewriting story and also reopens the question what a convenient presentation of resistor circuits over finite fields might be! We leave resolving these issues for future work.
 
A surprisingly basic -- and as far as we know open -- question which arises from this work concerns whether there is a finite presentation of ${\sf Resist}_R$ in terms of generators and relation. In order to provide a presentation for ${\sf Resist}_R$, we assume an infinite family of star-mesh identities, $(Y/\Delta)_n$, for each $n \in \mathbb{N}$.  While we have shown that this infinite set of identities completely characterize equality between circuits, it is an open question whether there is a finite presentation of the category. We conjecture that there isn't one.
\section{Acknowledgements}
ARK would like to thank NTT Research for financial and technical support. Research at IQC is supported in part by the Government of Canada through Innovation, Science and Economic Development Canada (ISED).
\newpage

\bibliographystyle{eptcs}
\bibliography{resistor}

\appendix
\newpage
\section{Hypergraph categories}
\label{Appendix: hypergraph}

A category is a {\bf hypergraph category} in case it is a symmetric monoidal category in  which every object is coherently a special commutative Fr\"obenius algebra, \cite{fong2019hypergraph}.  
This means that each object $X$ in the category  has an associated special Fr\"obenius algebra structure $(X, \nabla_X: X \ox X \to X, \eta_X: I \to X, \Delta_X: X \to X \ox X, \epsilon_X: X \to I)$, whose identities are graphically depicted below (with $\circ$ indicating both the multiplication, comultiplication, and units): 

The comultiplication, the counit, the multiplication and the unit are drawn as follows:
\[ \includegraphics[scale=0.03]{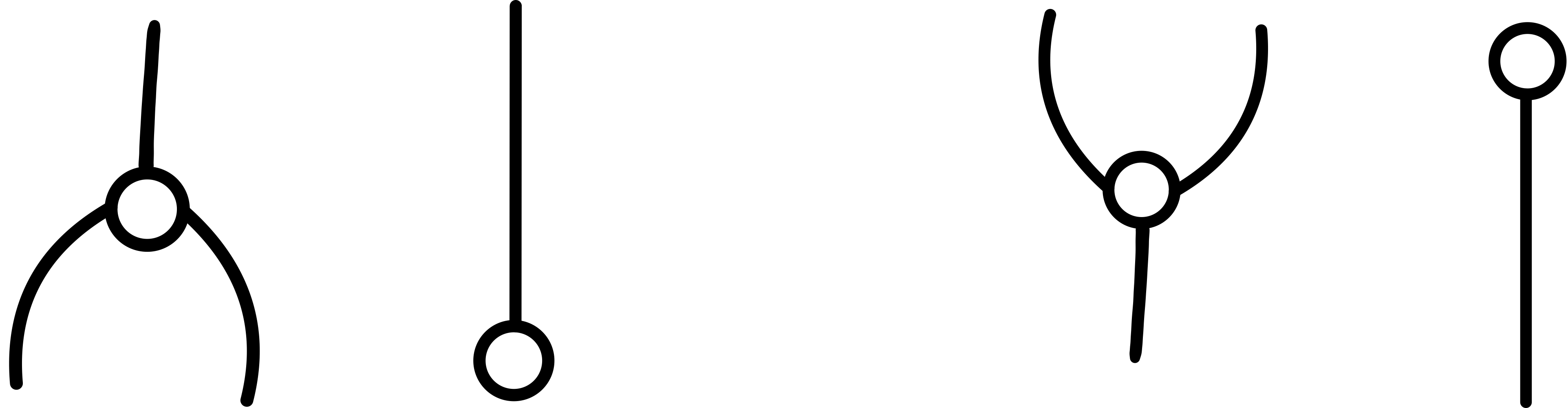} \] 
The maps satisfy the following equations and their vertically flipped image:
\begin{align*} 
(a) \vcenteredinclude{scale=0.07}{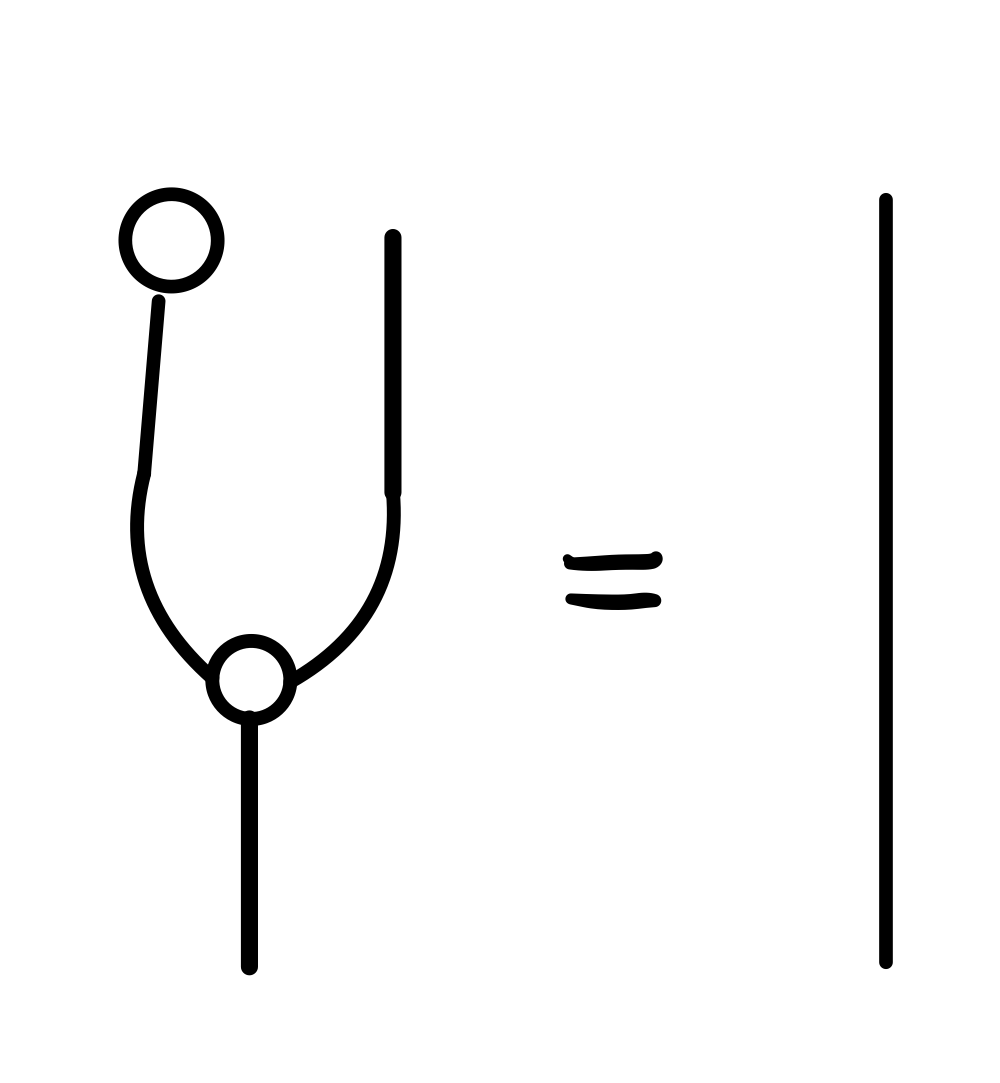} ~~~~~ 
(b) \vcenteredinclude{scale=0.07}{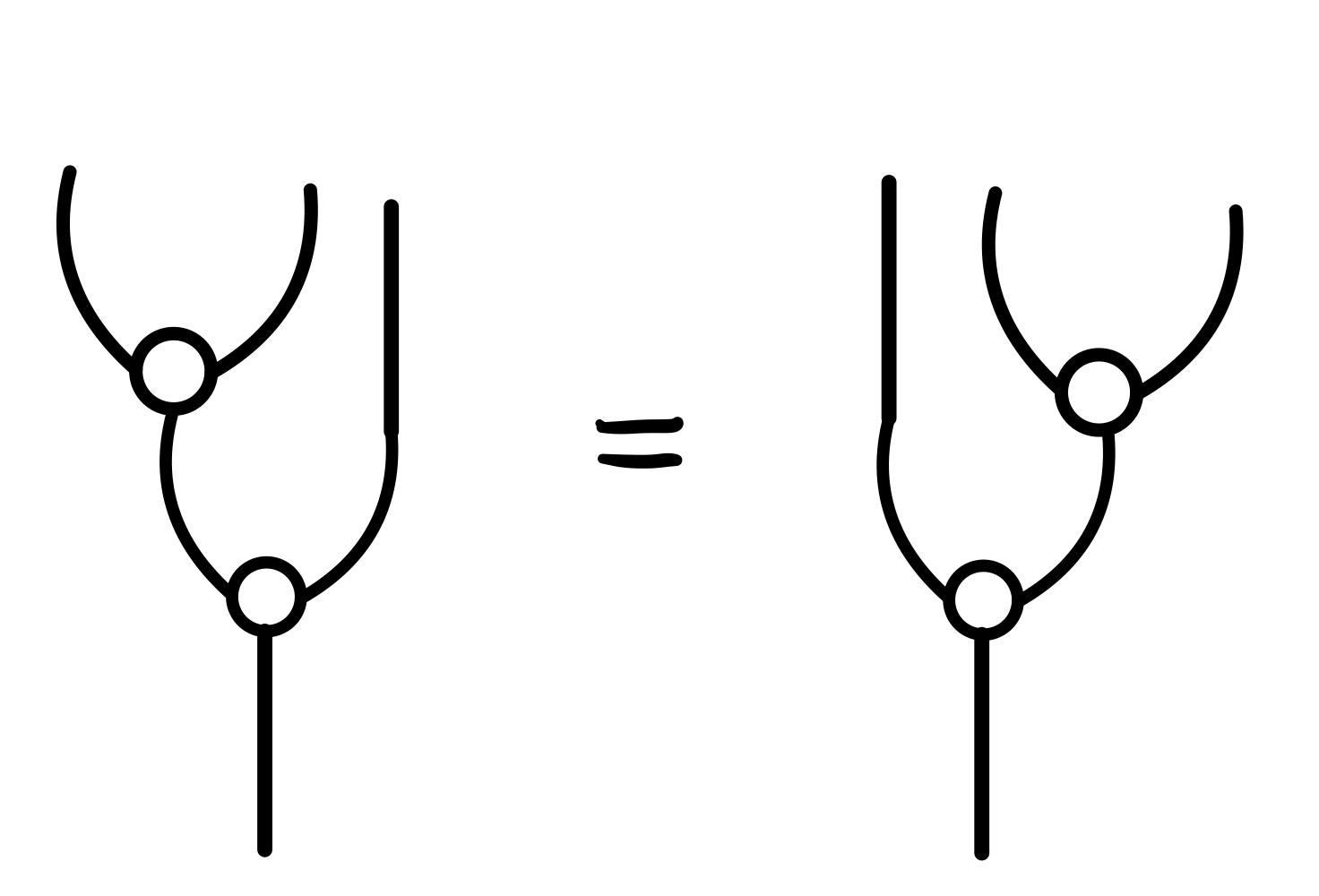} ~~~~~
(c) \vcenteredinclude{scale=0.06}{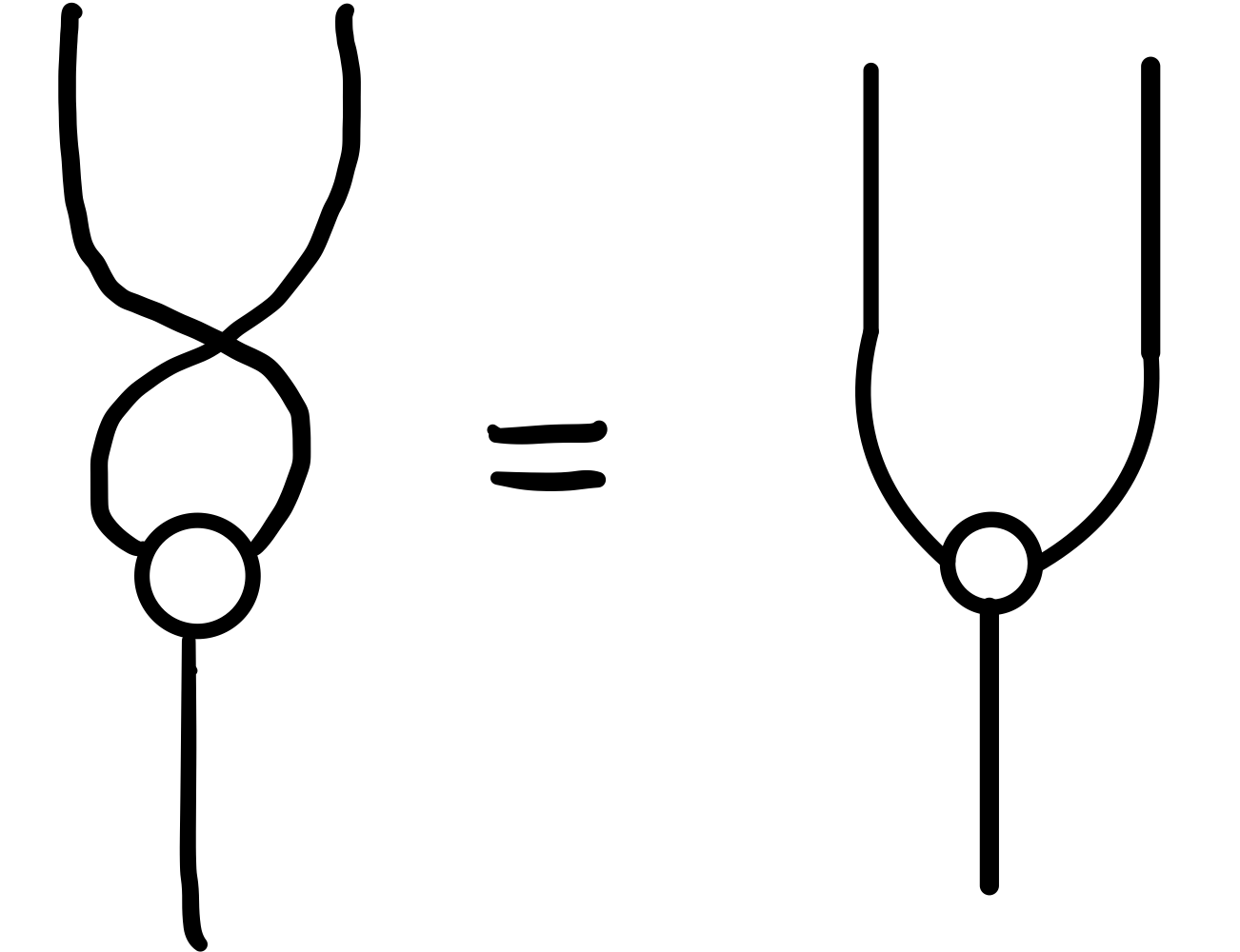} ~~~~~
\end{align*}
\begin{align} \label{eq:Frob}
 (e)~~ \vcenteredinclude{scale=0.07}{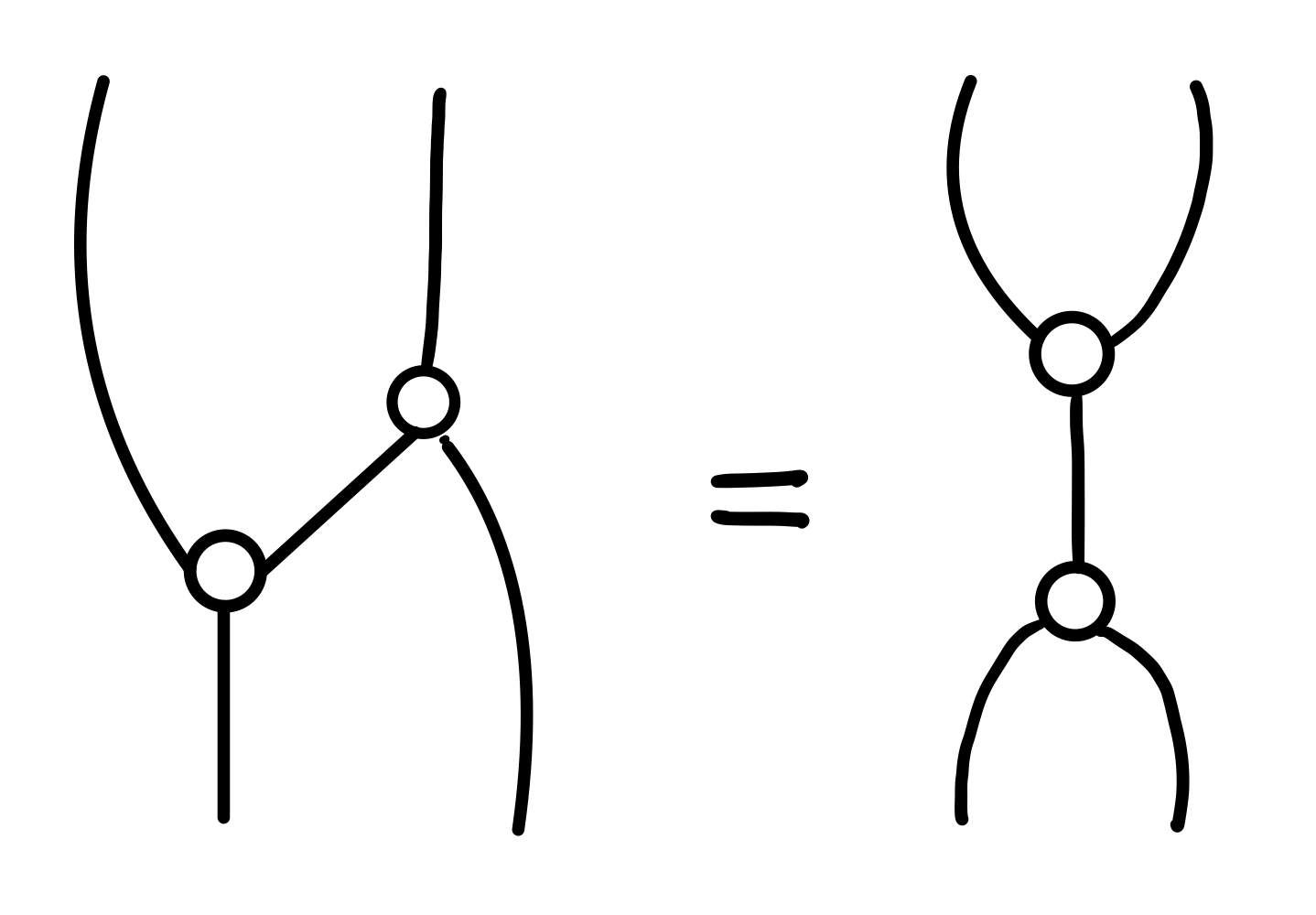} ~~~~~ 
 (f)~~ \vcenteredinclude{scale=0.07}{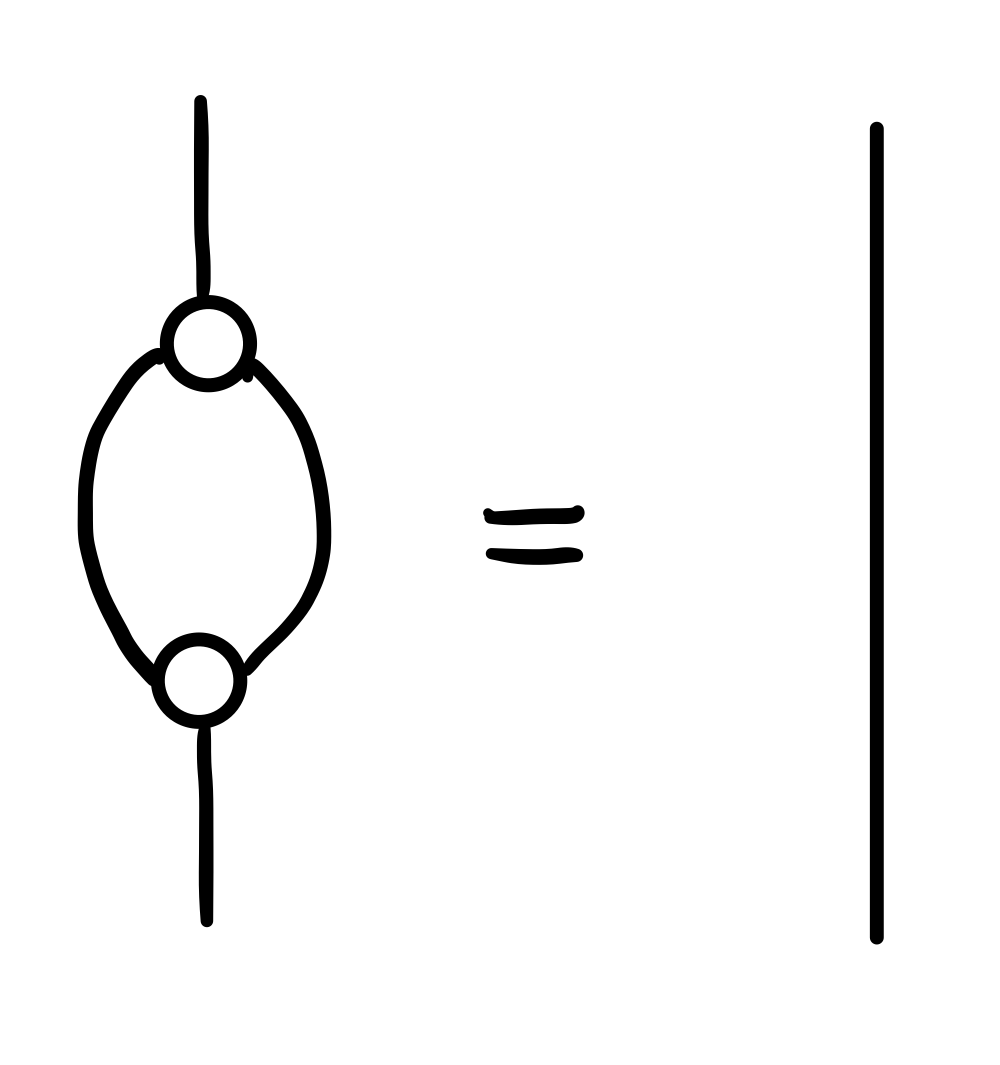} 
\end{align}

The Frobenius structure is not natural but is ``coherent'' in the sense that the multiplication on the tensor of two objects is given by $\Delta_{X \ox Y}:= (X \ox Y) \ox (X \ox Y) \to^{\sf ex}_{\simeq} (X \ox X) \ox (Y \ox Y) \to^{\Delta_X \ox \Delta_Y} X \ox Y$ and the comultiplication and units are similarly given.

Hypergraph categories are automatically compact closed: each object is self-dual.   This has the effect that the directionality of inputs and outputs is not as important as the connectivity.   

\end{document}